\documentclass[11pt]{article}

\usepackage[margin=1in]{geometry}
\usepackage{amsmath,amssymb,amsthm,mathtools,bm}
\usepackage{microtype}
\usepackage{booktabs}
\usepackage{tabularx,array}
\usepackage{float}
\usepackage{enumitem}
\usepackage{algorithm}
\usepackage{algpseudocode}
\usepackage[numbers,sort&compress]{natbib}
\usepackage[colorlinks=true,allcolors=blue]{hyperref}
\usepackage[nameinlink,noabbrev]{cleveref}

\newtheorem{theorem}{Theorem}[section]
\newtheorem{lemma}[theorem]{Lemma}
\newtheorem{proposition}[theorem]{Proposition}

\theoremstyle{definition}

\newcommand{\R}{\mathbb R}
\newcommand{\ip}[2]{\left\langle #1,#2\right\rangle}
\newcommand{\norm}[1]{\left\lVert #1\right\rVert}
\newcommand{\grad}{\nabla}
\newcommand{\eps}{\varepsilon}

\newcommand{\cT}{\mathsf{Trial}}
\newcommand{\Scale}{\mathsf{Scale}}
\newcommand{\Radius}{\mathsf{Radius}}
\newcommand{\Success}{\mathsf{Success}}
\newcommand{\Kbar}{\overline K}
\newcolumntype{Y}{>{\raggedright\arraybackslash}X}
\newcolumntype{P}[1]{>{\raggedright\arraybackslash}p{#1}}

\title{Optimal Parameter-Free Gradient Minimization in $\ell_p$ Geometry}
\author{Shuting Yang$^*$, Yuning Yang\thanks{School of Mathematics, Guangxi University, Nanning 530004, China}~\thanks{yyang@gxu.edu.cn}}
\date{}

\begin{document}
\maketitle

\begin{abstract}
We study the first-order oracle complexity of finding a queried point with
small gradient in $\ell_p$ geometry, with particular attention to the
information needed to adapt the unknown smoothness and distance scales.  In
the strict counted local value--gradient model, no finite complexity bound can
depend only on $LR/\eps$ without a nondegenerate local scale observation: a
one-dimensional construction keeps $LR/\eps=4$ while defeating every
prescribed finite query budget.

We resolve Diakonikolas's general-$\ell_p$ parameter-free extension question
for every fixed $1<p<\infty$.  Under a nondegenerate secant initialization, the
method knows neither the smoothness constant $L$, the initial solution distance
$R$, nor $f^*$, and returns a queried point $\widehat x$ with $\|\nabla f(\widehat x)\|_q\le\eps$.  For
fixed finite $p>2$, we first establish the dimension-free deterministic
known-parameter upper exponent $p/(p+2)$ in $K=LR/\eps$, matching the
published lower polynomial exponent under its horizon and dimension
qualifications.  The finite local routine fits the same observable
scale--radius procedure, so this exponent is preserved without knowing $L$
or $R$.  Writing $\Kbar=\max\{1,LR/\eps\}$, the post-initialization
pair-oracle complexity is
$O_p(\Kbar^{1/2})$ for $1<p<2$,
$O(\Kbar^{1/2})$ for $p=2$, and
$O_p(\Kbar^{p/(p+2)})$ for $p>2$, together with the additive calibration
cost $O_p(\log(e+L/M_0))$ in every regime.
\end{abstract}

\section{Introduction}

How quickly can a first-order method make the gradient small?  For smooth
convex optimization, this poses a distinct question from minimizing the
function value.  The gradient norm is a directly observable first-order
certificate that requires no knowledge of the optimal value.  This
distinction has generated a separate
oracle-complexity theory for gradient minimization, from regularization
reductions and two-stage constructions to methods designed specifically for
small terminal gradients
\citep{Nesterov2012,NesterovGasnikovGuminovDvurechensky2021,KimFessler2021,LeeParkRyu2021,DiakonikolasWang2022}.
Most recently, Lan, Ouyang, and Zhang developed an oracle-optimal
parameter-free theory in Euclidean geometry, removing prior knowledge of the
curvature and distance scales under their secant-based initialization
\citep{LanOuyangZhang2026}.

The non-Euclidean theory developed along a different axis.  In general norms,
the geometry needed to make gradients small depends essentially on the
underlying space.  Diakonikolas subsequently asked whether the Euclidean
parameter-free result could be extended to other $\ell_p$ norms using
general-norm complementary-composite methods
\citep[p.~2]{Diakonikolas2025OpenProblems}.  We resolve this question for every fixed $1<p<\infty$.

The first issue is informational: how can curvature adaptation begin if the
oracle has not yet revealed any positive local curvature scale?  In the
Euclidean parameter-free method of
\cite{LanOuyangZhang2026}, curvature adaptation is
initialized by a nondegenerate secant observation: the algorithm selects a
point $z_0\ne x_0$ with $\nabla f(z_0)\ne\nabla f(x_0)$ and thereby obtains the secant
quotient
\[
 M_0=
 \frac{\|\nabla f(z_0)-\nabla f(x_0)\|_2}
      {\|z_0-x_0\|_2}>0
\]
in our notation.
This observation raises a key question: is obtaining such a
nondegenerate secant an innocuous initialization step, or can discovering
it itself incur a nontrivial worst-case oracle cost?

We show that this initialization need not be free.  If the local
value-gradient queries used to discover such a point are counted, then no
finite complexity bound can depend only on $LR/\eps$.  The obstruction
already appears in one dimension, even with $LR/\eps=4$: for any prescribed
finite query budget, one can construct a smooth convex function for which
every point explored by the method has the same gradient as $x_0$, while both
the first nondegenerate secant and the region of small gradients lie beyond
the explored region.  Thus the cost of producing the secant initialization
itself is not controlled by $LR/\eps$ alone.

Because the information obstruction is one-dimensional, it applies to every
interior $\ell_p$ geometry.  We work under the nondegenerate
secant-initialization convention of Lan--Ouyang--Zhang
\citep{LanOuyangZhang2026}.  Under this information convention,
\Cref{thm:main} gives the rates stated below for every fixed
$1<p<\infty$
in the smooth convex setting considered here, where each oracle call
returns an exact value--gradient pair and the output must be a queried point
\citep[p.~2]{Diakonikolas2025OpenProblems}.  Below two, the resulting
dimension-uniform upper bound has no multiplicative accuracy logarithm and
matches the exponent $1/2$ in $K=LR/\eps$ of the published deterministic
lower benchmark; that benchmark retains an $\ln d$ factor together with
horizon, dimension, and model qualifications.  The endpoints, nonconvex
extensions, randomized optimality, exact dependence on $p$, $d$, or
$\ln d$, arbitrary unqueried outputs, and bit complexity remain outside the
result.

For $1<p<2$, the known-parameter dynamics are prior work.  Kim, Park,
Ozdaglar, Diakonikolas, and Ryu proved that $N$ accelerated mirror-descent
steps followed by $N$ mirror-dual steps attain the fixed-horizon terminal
rate $O_p(LR/N^2)$ when $L$ is known
\citep[Cor.~3]{KimParkOzdaglarDiakonikolasRyu2024}.  The contribution here is
to turn those dynamics into a finite observable trial at guesses $(M,D)$:
every consecutive pair is queried before its cocoercive test, failure gives
$M<L$, and an all-passing nonsuccess gives $D<R$.  Coupling this exact
$\Success/\Scale/\Radius$ interface to the secant controller preserves the
square-root rate when both $L$ and $R$ are unknown.

For finite $p>2$, the answer includes an additional deterministic
known-parameter optimality result.  The complementary-composite construction
of Diakonikolas and Guzm\'an obtains the dimension-free upper exponent
$2(p-1)/(p+2)$, whereas their lower bound has exponent $p/(p+2)$ in
$K=LR/\eps$
\citep[Eq.~(18), Thm.~3, Secs.~3--4]{DiakonikolasGuzman2024}.  Their route
first drives a regularized function
gap to the scale required by the subsequent gradient conversion---of order
$\eps^2/M$ in the smooth trial normalization used here---and the resulting
balance gives the larger exponent.  The present two-phase construction stops
its first phase already at gap $O(\eps D)$ and then uses the anti-diagonal
mirror dual to control
$h^*(\nabla F)=\|\nabla F\|_q^q/q$ directly at a queried terminal point; the
finite anti-diagonal architecture is adapted from
\citet{KimParkOzdaglarDiakonikolasRyu2024}.  If $\eta$ denotes the permitted
accumulated error, its terminal weight satisfies
$u_N\asymp_p\eta^{(p-2)/p}N^{(p+2)/p}$ and preserves the
$N^{-(p+2)/p}$ scale, whose inversion gives the upper exponent $p/(p+2)$.
This matches the published lower polynomial exponent when the selected
lower-bound horizon fits in the dimension.  Observable tests at trial values
of $L$ and $R=\operatorname{dist}_p(x_0,X^*)$ place the finite routine inside
the same scale--radius procedure, so parameter adaptation preserves the new
exponent.

Given $x_0,z_0$ and the observable secant scale $M_0>0$, the method needs no
knowledge of $L$, $R$, or $f^*$ and returns a queried point $\widehat x$ with
$\|\nabla f(\widehat x)\|_q\le\eps$.  Writing $\Kbar=\max\{1,LR/\eps\}$,
the post-initialization complexity is
$O_p(\Kbar^{1/2})$ for $1<p<2$,
$O(\Kbar^{1/2})$ for $p=2$, and
$O_p(\Kbar^{p/(p+2)})$ for $p>2$, together with the additive
$O_p(\log(e+L/M_0))$ calibration cost.

The secant calibration and distance-scale guess-and-check retain the
parameter-free controller lineage of Lan--Ouyang--Zhang.  Each
geometry-specific local routine is organized so that a finite trial either
returns a queried point $\widehat x$ with
$\|\nabla f(\widehat x)\|_q\le\eps$, or establishes
\[
  M<L \qquad\text{or}\qquad D<R.
\]
The first implication comes from an observed failure of a finite smoothness
test.  For the second, each $p$-regime proves
\[
  D\ge R \ \text{and all smoothness tests pass}
  \quad\Longrightarrow\quad
  \exists\,\widehat x\ \text{queried with }\|\nabla f(\widehat x)\|_q\le\eps.
\]
Hence a fully checked but unsuccessful trial proves $D<R$.  The controller
therefore increases only a trial value already proved insufficient, and the
visited costs can be summed first over radii and then over curvature scales.

The conditional-success proof depends on the geometry.  For $1<p<2$, a
guarded accelerated mirror-descent phase produces a value gap, and its finite
anti-diagonal mirror dual converts that gap to the gradient at a queried
terminal point.  Both phases use only consecutive $\mathsf C_M$ tests, and
the local cost is $O_p(\sqrt{MD/\eps})$.  For $p=2$, the two-stage Euclidean
construction \citep{NesterovGasnikovGuminovDvurechensky2021,KimFessler2021}
is turned into a finite certificate at a trial scale by checking the
smooth-convex interpolation conditions of
\citet{TaylorHendrickxGlineur2017} together with a terminal observation.  For
$p>2$, consecutive queried pairs supply the oriented Bregman test needed to
run the two phases above at a trial curvature.  The resulting local cost is
$O_p((MD/\eps)^{p/(p+2)})$, and the two geometric sums preserve that
exponent.

In summary, our contributions are threefold.
\begin{enumerate}[leftmargin=2.2em]

\item
We show that, if the cost of obtaining a nondegenerate secant is counted,
no finite complexity bound depending only on $LR/\eps$ is possible, even in
one dimension with $LR/\eps=4$.  The result also covers randomized
finite-horizon methods and worst-case expected time.

\item
For fixed finite $p>2$, we establish the deterministic known-parameter
polynomial exponent $p/(p+2)$ for queried outputs.  The upper bound is
dimension-free; the matching lower polynomial exponent retains its
$T\le d$ and $\min\{p,\ln d\}$ qualifications.

\item
We resolve Diakonikolas's parameter-free extension question for every fixed
$1<p<\infty$.  Under the nondegenerate secant-initialization convention, the
method has the three complexity rates stated above.  Below two, the finite guarded realization and
its exact queried-pair chronology connect the prior known-parameter
AMD/mirror-dual dynamics to the same three observable outcomes.  Thus
adaptation to unknown $L$ and $R$ preserves the local square-root rate without
using $f^*$.

\end{enumerate}

\subsection*{Related work}

Gradient-norm minimization for smooth convex functions has been developed through several complementary constructions.  Nesterov's regularization reduction
\citep{Nesterov2012} was followed by two-stage schemes that first reduce function error and then control the terminal gradient \citep{NesterovGasnikovGuminovDvurechensky2021}, and by OGM-G, designed specifically
for terminal-gradient decrease \citep{KimFessler2021}.  A geometric view of acceleration
led to related small-gradient constructions \citep{LeeParkRyu2021}, while the
potential-function framework of Diakonikolas and Wang gives a common analysis of several convex and min--max gradient-reduction schemes \citep{DiakonikolasWang2022}.

Smooth objective minimization in Banach spaces has a classical optimal-method lineage \citep{NemirovskiiNesterov1985}.  For the terminal-gradient criterion, the oracle-complexity
setting with nonstandard input and output norms was isolated by Guzm\'an \citep{Guzman2015Nonstandard}, with high-dimensional lower-bound benchmarks in
$\ell_p$ geometry developed by Guzm\'an and Nemirovski \citep{GuzmanNemirovski2015}.
Diakonikolas--Guzm\'an then developed complementary-composite methods for small gradients in general norms \citep{DiakonikolasGuzman2024}.  Below two, their
regularization route gives an upper bound with a multiplicative accuracy logarithm,
while their displayed smooth lower benchmark is
$\Omega(\sqrt{LR/(\eps\ln d)})$ under its dimension and horizon conditions.
Above two, their rate comparison identifies the deterministic fixed-$p$
polynomial-exponent gap closed here.  Our lower construction uses the local smoothing and resisting completion of Guzm\'an--Nemirovski and the radial cap of
Diakonikolas--Guzm\'an \citep{GuzmanNemirovski2015,DiakonikolasGuzman2020}.

Our finite-horizon coefficient construction uses the mirror-duality framework of Kim, Park, Ozdaglar, Diakonikolas, and Ryu, including its CFOM
representation, anti-diagonal transform, and terminal-gradient relation
\citep{KimParkOzdaglarDiakonikolasRyu2024}.  In particular, their
Corollary~3 already gives the known-$L$, fixed-horizon $O_p(LR/N^2)$ terminal
gradient rate for $1<p\le2$.  Below two, we adapt their CFOM/AMD and
mirror-dual coefficient architecture using the local finite plateau schedule displayed in \eqref{eq:below-weights}; we then make the inequalities depending
on smoothness observable at a trial value $M$ and prove the exact
three-outcome interface needed for unknown $L,R$.  The
$p$-power geometry needed when $p>2$ instead leaves a mixed residual whose
negative part must be controlled and accumulated explicitly.  Bai and
Bullins obtain a queried-iterate $2/3$ gradient rate under a
trajectory-sensitive Euclidean-radius accounting; converting only a uniform
$\ell_p$ radius retains the factor $d^{(p-2)/(3p)}$, together with their
stated inner-search overhead \citep{BaiBullins2025}.  Thus, when only $R_p$
is controlled, this conversion retains dimension dependence, whereas the
bound proved here is dimension-free and has exponent $p/(p+2)$.

Parameter adaptation has a separate lineage.  Adaptive regularization under the gradient-norm criterion was studied by Ito and Fukuda
\citep{ItoFukuda2021}.  Universal model tests adapt to unknown H\"older smoothness \citep{Nesterov2015Universal}, restart schemes adapt objective-gap methods without
knowing the optimum \citep{RenegarGrimmer2022}, and recent line-search-free methods
obtain function-value guarantees across unknown smoothness regimes \citep{LiLan2025}.
These forms of unknown-smoothness or objective-gap adaptation do not by themselves resolve simultaneous unknown curvature and unknown solution distance under a
terminal-gradient criterion.  The Euclidean parameter-free method of
Lan--Ouyang--Zhang provides the baseline for the present scale and distance
adaptation
\citep[Sec.~4]{LanOuyangZhang2026}, which combines secant-based calibration
and backtracking with guess-and-check adaptation to the unknown distance
scale.  Diakonikolas--Guzm\'an's general-$\ell_p$ regularization, by contrast,
depends explicitly on the initial solution distance.  The issue addressed
here is how to retain the general-$\ell_p$ small-gradient rates when both
scales are unknown.  We modify the local routines so that an unsuccessful run reveals which guess is too small, avoiding an independent search over
both scales.  The oriented Bregman-to-gradient quotient used in the new
above-two trial has an adaptive line-search precedent in A2GD
\citep{XuChen2026A2GD}; the all-passing radius implication and the two
geometric sums are proved here for the present finite routine.

The information model creates a separate boundary.  Stochastic small-gradient methods and their local-versus-global oracle complexity are studied by Allen-Zhu and Foster
et al.\ \citep{AllenZhu2018,FosterEtAl2019}.  In contrast, Carmon--Hinder and
Kreisler et al.\ establish parameter-free stochastic \emph{objective-suboptimality}
guarantees \citep{CarmonHinder2022,KreislerIvgiHinderCarmon2024}; Lawrence et al.\
further separate sample complexity from gradient-query and computational complexity
when problem parameters are unknown \citep{LawrenceEtAl2026}.  Attia and Koren give
impossibility and price results for stochastic gradient-only parameter-free optimization
\citep{AttiaKoren2024}.  Our lower bound is related in spirit to these limits of adaptation,
but it uses a different information model.  Arjevani and Shamir study oblivious
first-order algorithms \citep{ArjevaniShamir2016}, and lower bounds for finding stationary
points in broader smooth classes provide a further benchmark
\citep{CarmonDuchiHinderSidford2020}.  Here the oracle returns exact local function values and gradients, and the obstruction is one-dimensional: even
with $LR/\eps=4$ fixed, a finite transcript need not reveal either a
nondegenerate secant or a region of small gradients.  Thus our negative result
concerns the cost of discovering scale information itself, rather than
limitations of obliviousness, stochastic feedback, or nonconvex stationarity.

\paragraph{Organization} The paper proceeds as follows.  \Cref{sec:models}
separates the two oracle models, proves the impossibility theorem, and states
the known-parameter above-two comparison and \Cref{thm:main} together with
its proof structure.
\Cref{sec:certification} develops the controller;
\Cref{sec:below,sec:euclidean,sec:above} prove the three local routines;
and \Cref{sec:global} sums their costs and completes the theorem proof.
\Cref{sec:conclusion} draws conclusions.

\section{Oracle models, information boundary, and main results}
\label{sec:models}

Fix $1<p<\infty$, set $q=p/(p-1)$, and let $\eps>0$ denote the target gradient norm.

\subsection{Two information conventions}

In the \emph{strict counted local model}, the procedure begins from an initial
point $x_0$, and every exact local query at $x$ returns the pair
$(f(x),\grad f(x))$.  The next query and final finite output are measurable
functions of the finite transcript (and internal randomness, if present).
There is no supplied nondegenerate secant observation and no uncounted nonlocal
line minimization.

For \Cref{thm:main}, we use a different, \emph{post-initialization secant model}.
For a differentiable objective $f:\R^d\to\R$, in addition to $x_0$ the method
receives a point $z_0\ne x_0$ with $\grad f(z_0)\ne\grad f(x_0)$ and the
observable number
\begin{equation}\label{eq:M0}
 M_0:=\frac{\norm{\grad f(z_0)-\grad f(x_0)}_q}
 {\norm{z_0-x_0}_p}>0.
\end{equation}
Under the $L$-smoothness condition imposed below, $M_0\le L$.  The cost of
discovering $z_0$ is outside the count; after it is supplied, every exact
pair-oracle call is counted.  This convention corresponds to the secant
initialization used by Lan--Ouyang--Zhang
\citep[Sec.~4]{LanOuyangZhang2026}.  In their notation, the raw secant quotient
\eqref{eq:M0} is $\widetilde M_0$; their symbol $M_0$ denotes the value obtained
after a backtracking calibration.  We retain $M_0$ for the raw primitive input
and use $M_{\rm a}$ for the accepted calibrated scale.

The information distinction, rather than the ambient norm alone, determines
whether a parameter-free conclusion is possible.  The following matrix states
the boundary used throughout the paper.

\begin{table}[ht]
\centering
\small
\caption{Information model versus attainable result.  The Euclidean entry
refers to the secant-initialized calibration convention of
Lan--Ouyang--Zhang; our primitive inputs are defined in \eqref{eq:M0}.}
\label{tab:information-matrix}
\begin{tabularx}{\textwidth}{@{}p{0.29\textwidth}YY@{}}
\toprule
Information model & Euclidean $p=2$ & General interior $\ell_p$ geometry \\
\midrule
Strict counted local oracle, with no nondegenerate secant observation
& Impossible by \Cref{thm:impossibility}
& Impossible by the same one-dimensional theorem, since every norm on
  $\R$ is absolute value \\
\addlinespace
Nondegenerate secant initialization, followed by counted local calls
& Euclidean parameter-free baseline of Lan--Ouyang--Zhang, recovered within
  \Cref{thm:main}
& \Cref{thm:main} for every fixed $1<p<\infty$ \\
\bottomrule
\end{tabularx}
\end{table}

The relevant dichotomy is \emph{no nondegenerate secant observation versus
an observable one}, across both Euclidean and non-Euclidean geometries.
The first side permits the hidden transition in \Cref{thm:impossibility}; the
second permits adaptation to unknown curvature and distance in all fixed
interior $\ell_p$ geometries.  The table records the two interfaces studied
here and makes no uniqueness claim about the secant interface.

\subsection{A strict-model information boundary}

A nondegenerate secant point may look easy to obtain for a generic nonlinear
objective.  The difficulty is uniformity: a worst-case function can hide all
gradient variation arbitrarily far from $x_0$ while keeping the normalized
smoothness--distance product fixed.  The theorem below formalizes this
obstruction.

The hard family below uses the affine--quadratic shape familiar from
performance-estimation worst cases \citep{DroriTeboulle2014,TaylorHendrickxGlineur2017},
but changes its role: the transition scale is selected after an arbitrary
exact value-gradient transcript rather than for a fixed method's objective
error.  Neighboring impossibility results concern oblivious first-order
algorithms \citep{ArjevaniShamir2016} or stochastic gradient-only information
\citep{AttiaKoren2024}; lower bounds for stationary points use different
classes and criteria \citep{CarmonDuchiHinderSidford2020}.

\begin{theorem}[Scale-identification impossibility]\label{thm:impossibility}
Fix $1<p<\infty$ and a supplied accuracy $\eps>0$.
\begin{enumerate}[label=(\alph*)]
\item For every deterministic strict local exact value-gradient method, every
  positive integer $N$, and every prescribed $x_0\in\R$, there is a convex,
  coercive $C^1$ function $f:\R\to\R$ with a unique minimizer $x^*$ and exact
  gradient-Lipschitz constant $L>0$ such that, with $R:=|x^*-x_0|$,
  $LR/\eps=4$, none of the first $N$ queries has $|f'(x)|\le\eps$, and any
  additional finite transcript-measurable output can also be forced to have
  gradient magnitude larger than $\eps$.
\item For every randomized method, finite horizon $N$, and $\delta\in(0,1)$,
  one deterministic member of the same family has $LR/\eps=4$ and success
  probability at most $\delta$ through horizon $N$, including any finite
  transcript-measurable output.
\item Consequently, the supremum over instances with $LR/\eps=4$ of the
  expected hitting time of an $\eps$-gradient point is infinite.
\end{enumerate}
The statement holds simultaneously for every interior $\ell_p/\ell_q$
geometry because all one-dimensional norms equal absolute value.
\end{theorem}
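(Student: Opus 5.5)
We prove part (a) with a resisting-oracle argument over a one-parameter family of affine--quadratic functions, where the transition point is chosen only after the transcript is fixed. Normalize $x_0=0$. Fix a slope $g>\eps$, say $g=2\eps$. For a transition location $a>0$ and curvature $L>0$, let $f_{a,L}$ be the Huber-type function
\[
 f_{a,L}(x)=\begin{cases} -g x, & x\le a,\\ -g x+\tfrac{L}{2}(x-a)^2, & x>a.\end{cases}
\]
This function is convex and $C^1$. Its derivative $-g+L(x-a)_+$ is Lipschitz with exact constant $L$. The unique minimizer is $x^*=a+g/L$, and $f_{a,L}$ is coercive to the right but not to the left. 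To repair this, add a mirror quadratic piece at a far-left point $b<0$: take $f'(x)=-g+L(x-a)_+-L(b-x)_+$. This is still convex, $C^1$ and coercive, has exact constant $L$, and keeps the same unique minimizer. The normalization $LR/\eps=4$ fixes $L(a+g/L)=4\eps$, i.e.\ $La=2\eps$. So the family is parametrized by $a$ with $L=2\eps/a$, and $|b|$ is taken as large as $a$ or larger. Since $f'(x)=-g$ on $[b,a]$, every query there sees $|f'|=2\eps>\eps$. The set $\{x:|f'(x)|\le\eps\}=[a+g/(2L)... ]$ is an interval near $x^*$ lying strictly to the right of $a$, and its left counterpart lies left of $b$.

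The adversary runs the deterministic method against the oracle answering $(f(x),f'(x))=(-gx,-g)$; the constant of integration is fixed by $f(0)=0$. This answer is independent of $a$ and $b$. The transcript therefore determines $N$ query points $x_1,\dots,x_N$, plus a finite output $\widehat x$, all without reference to the instance. Choose $a>\max_i|x_i|,|\widehat x|$ and $b=-a$. Every queried point and the output then lie in $(b,a)$, where $f_{a,L}$ agrees with the simulated answers, so the transcript is consistent with $f_{a,L}$. Each of these points has $|f'|=2\eps>\eps$. This proves (a). We should double-check that the transition region and the small-gradient region are both beyond $a$, which holds since $f'=-g$ up to $a$. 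The one-dimensional remark follows because every norm on $\R$ is a multiple of absolute value, and with the standard normalization it equals it.

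For part (b), the same oracle answer $-g$ is again instance-independent on $(b,a)$. The randomized method's queries and output through horizon $N$ therefore form a random finite set whose law does not depend on $a$. Choose $A$ with $\Pr(\max_{i\le N}|x_i|\vee|\widehat x|\ge A)\le\delta$, which is possible since a finite collection of real random variables is tight. The instance $a=A$, $b=-A$ then succeeds with probability at most $\delta$. For part (c), fix $N$ and $\delta=1/2$. The expected hitting time of the corresponding instance is at least $N\cdot\Pr(\text{no success by }N)\ge N/2$. Letting $N\to\infty$ gives an infinite supremum.

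The main subtlety is measurability and the correct handling of the randomized case. We need the tightness step to hold uniformly over the instance, which it does precisely because the oracle answers on the explored region are identical across the whole family. This identity is the key property: the family must share a common transcript, and that is what allows the transition point to be chosen after the transcript.
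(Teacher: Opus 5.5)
Your proposal is correct and is essentially the paper's argument: you simulate against the affine oracle with slope $-2\eps$, place the transition point $a$ (the paper's $H$) beyond every simulated query and output so that $L=2\eps/a$, $R=2a$, and $LR/\eps=4$, pick a deterministic $a$ by tightness in the randomized case (the coincidence of the two runs on the good event is the same-seed coupling the paper spells out), and conclude $\mathbb E T\ge N/2$. Two side remarks are wrong but harmless: $-gx\to+\infty$ as $x\to-\infty$, so your left quadratic piece at $b$ is unnecessary (the paper uses an affine--quadratic--affine profile and needs $H$ only beyond the positive displacements), and with your formula $|f'(x)|>g$ for $x<b$, so there is no small-gradient ``left counterpart''.
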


\paragraph{Proof roadmap.}
The deterministic construction first freezes the supplied accuracy and then
runs the method against an exactly affine value--gradient oracle.  Because the
resulting transcript is finite, a transition length $H$ can be selected beyond
every query and every transcript-measurable finite output.  A convex
affine--quadratic--affine function with that transition length reproduces the
entire transcript while keeping $LR/\eps$ fixed.  For a randomized method the
same argument is applied on a high-probability bounded-transcript event and
the two runs are coupled with the same seed.  Finally, the resulting tail
bound is converted directly into an expected-time lower bound.

\begin{proof}
\emph{Deterministic transcript.}
The quantifier order is important.  First fix the accuracy $\eps$ supplied to
the method, and only then put $g:=2\eps$.  Simulate the deterministic method
for $N$ calls against the affine oracle
\begin{equation}\label{eq:affine-oracle}
 \ell(x)=-g(x-x_0),\qquad \ell'(x)=-g.
\end{equation}
Write the affine transcript after $t$ calls as
\begin{equation}\label{eq:affine-transcript}
 \mathcal T_t^\ell
 :=\bigl((x_s^\ell,\ell(x_s^\ell),\ell'(x_s^\ell))\bigr)_{s=1}^t,
 \qquad \mathcal T_0^\ell:=\varnothing.
\end{equation}
Because the method is deterministic, there are transcript maps
$\mathcal A_{t+1}$ and, when the method returns an additional point, an output
map $\Phi_N$ such that
\begin{equation}\label{eq:query-output-maps}
 x_{t+1}^\ell=\mathcal A_{t+1}(\mathcal T_t^\ell),
 \qquad
 y_N^\ell=\Phi_N(\mathcal T_N^\ell).
\end{equation}
Thus the simulation produces fixed real queries
$x_1^\ell,\ldots,x_N^\ell$ and, when applicable, one further fixed real point
$y_N^\ell$.  Define
\[
 B_N:=\max\bigl(\{0\}\cup\{x_t^\ell-x_0:1\le t\le N\}
          \cup\{y_N^\ell-x_0\}\bigr),
\]
omitting the last singleton when no additional output is made.  The set is
finite, so $B_N<\infty$.  We may consequently choose $H>B_N$.  Notice that
$H$ is selected after a finite transcript but before the adversarial instance
is declared; this is exactly the order in the existential conclusion of
part~(a).

With $z=x-x_0$, define
\begin{equation}\label{eq:hard-family}
 f_H(x)=
 \begin{cases}
 -gz, & z\le H,\\[1mm]
 -gz+\dfrac{g}{2H}(z-H)^2, & H\le z\le3H,\\[3mm]
 gz-4gH, & z\ge3H.
 \end{cases}
\end{equation}
At $z=H$, the middle value equals $-gH$ and its derivative equals $-g$;
at $z=3H$, the middle value equals $-gH$ and its derivative equals $g$.
These agree with the two affine pieces.  Thus the derivative is
\begin{equation}\label{eq:hard-derivative}
 f_H'(x)=
 \begin{cases}
 -g, & z\le H,\\[1mm]
 g(z/H-2), & H\le z\le3H,\\[1mm]
 g, & z\ge3H.
 \end{cases}
\end{equation}
The displayed derivative is continuous and nondecreasing, proving that
$f_H$ is $C^1$ and convex.  The left tail satisfies
$f_H(x)=-g(x-x_0)\to+\infty$ as $x\to-\infty$, and the right tail satisfies
$f_H(x)=g(x-x_0)-4gH\to+\infty$ as $x\to+\infty$; hence $f_H$ is coercive.
The derivative vanishes only when $z/H-2=0$, so the unique minimizer is
$x^*=x_0+2H$.

The derivative is constant on the tails and has slope $g/H$ on $[H,3H]$.
Consequently it is globally $g/H$-Lipschitz.  This constant is exact, because
for any distinct $z_1,z_2\in(H,3H)$,
\[
 \frac{|f_H'(x_0+z_2)-f_H'(x_0+z_1)|}{|z_2-z_1|}=\frac gH.
\]
Thus $L=g/H$ is the exact gradient-Lipschitz constant.

\emph{Indistinguishability and normalization.}
On the whole half-line $z<H$, both the value and derivative in
\cref{eq:hard-family,eq:hard-derivative} agree exactly with
\eqref{eq:affine-oracle}.  Let
\[
 \mathcal T_t^H
 :=\bigl((x_s^H,f_H(x_s^H),f_H'(x_s^H))\bigr)_{s=1}^t,
 \qquad \mathcal T_0^H:=\varnothing
\]
be the transcript on the declared instance.  For $0\le t<N$, the induction
step is explicit:
\begin{align}
 \mathcal T_t^H=\mathcal T_t^\ell
 &\Longrightarrow
 x_{t+1}^H
 =\mathcal A_{t+1}(\mathcal T_t^H)
 =\mathcal A_{t+1}(\mathcal T_t^\ell)
 =x_{t+1}^\ell<x_0+H,\label{eq:query-coupling}\\
 &\Longrightarrow
 \bigl(f_H(x_{t+1}^H),f_H'(x_{t+1}^H)\bigr)
 =\bigl(\ell(x_{t+1}^\ell),\ell'(x_{t+1}^\ell)\bigr),\nonumber\\
 &\Longrightarrow \mathcal T_{t+1}^H=\mathcal T_{t+1}^\ell.\nonumber
\end{align}
Starting from the empty transcript gives
$\mathcal T_t^H=\mathcal T_t^\ell$ for every $t\le N$.  The output map
therefore gives
\[
 y_N^H=\Phi_N(\mathcal T_N^H)
      =\Phi_N(\mathcal T_N^\ell)=y_N^\ell<x_0+H
\]
whenever an additional output is present.

Every queried or returned point therefore has gradient magnitude
$g=2\eps>\eps$.  At the same time
\[
 R=2H,\qquad \frac{LR}{\eps}
 =\frac{(g/H)(2H)}{g/2}=4.
\]
This proves part~(a), including the known-$\eps$ quantifier order and an
arbitrary finite transcript-measurable output.

\emph{Randomized finite horizon.}
Let $\omega$ denote the method's random seed and run it against the affine
oracle.  Write $\mathcal T_t^{\ell,\omega}$ for the resulting transcript and
let $M_N(\omega)$ be the maximum of zero and all positive displacements among
the first $N$ finite queries and any finite output.  Since this is a maximum
of finitely many almost-surely finite random variables, $M_N(\omega)<\infty$
almost surely.  Hence $\mathbb P(M_N(\omega)<h)\uparrow1$ as $h\to\infty$, so
there exists a \emph{deterministic} $H$ for which
\[
 \mathbb P(E_H)\ge1-\delta,
 \qquad E_H:=\{\omega:M_N(\omega)<H\}.
\]
Use the same seed for the affine run and the run on this fixed $f_H$.  On
$E_H$, the deterministic transcript induction applies seed by seed and gives
\[
 \mathcal T_t^{H,\omega}=\mathcal T_t^{\ell,\omega}
 \quad(0\le t\le N),
\]
together with equality of the two output maps.  Thus the runs are identical
through the horizon and every encountered gradient has magnitude $g>\eps$.
Therefore success can occur only on the complementary event, whose probability
is at most $\delta$.  This proves part~(b).

\emph{Expected hitting time.}
For each $N$, apply part~(b) with $\delta=1/2$ and let $T$ be the first hitting
time of a queried or returned $\eps$-gradient point.  The selected instance
satisfies $\mathbb P(T>N)\ge1/2$.  Since
\[
 T\ge N\,\mathbf 1_{\{T>N\}},
 \qquad
 \mathbb E T\ge N\mathbb P(T>N)\ge\frac N2,
\]
the supremum of $\mathbb E T$ over the fixed-normalization family is at least
$N/2$ for every $N$, and is therefore infinite.  This proves part~(c).
\end{proof}

The obstruction is informational rather than geometric: the hidden length
$H$ can remain beyond every finite local transcript even when the
scale-invariant quantity $LR/\eps$ is fixed.  In particular, part~(b) shows
that randomization does not make the initialization free: no finite randomized
procedure can be guaranteed, with uniformly high probability, to locate a
point whose gradient differs from $\nabla f(x_0)$ at a cost controlled only
by $LR/\eps$.

The Euclidean parameter-free result of Lan--Ouyang--Zhang begins after a
nondegenerate secant point has been selected.  That initialization carries
information absent from the strict counted model.  The interface in
\eqref{eq:M0} is one sufficient way to supply it; uniqueness and minimality
of the interface remain open.

\subsection{The known-parameter rate and the parameter-free theorem}

Let $x_0\in\R^d$ be the initial point and let
$f:\R^d\to\R$ be differentiable and convex, with
$X^*:=\operatorname*{argmin}f\ne\varnothing$.  Assume
\begin{equation}\label{eq:smoothness}
 \norm{\grad f(x)-\grad f(y)}_q\le L\norm{x-y}_p
 \qquad(x,y\in\R^d),
\end{equation}
and write
\begin{equation}\label{eq:condition}
 R:=\operatorname{dist}_p(x_0,X^*),\qquad
 K:=\frac{LR}{\eps},\qquad \Kbar:=\max\{1,K\}.
\end{equation}

The following proposition isolates the local rate that will be used by the
parameter-free construction.  In this statement $L,R,p$, and $\eps$ are
known, one oracle call returns the exact pair $(f(x),\nabla f(x))$, the
prescribed point $x_0$ is the first query and is charged, subsequent queries
may be arbitrary deterministic functions of the preceding pairs, and the
output must be a queried point.

\begin{proposition}[Known-parameter exponent for finite $p>2$]
\label{prop:pgtwo-optimality}
Fix $2<p<\infty$, let $q=p/(p-1)$, and suppose $L,R>0$.
\begin{enumerate}[label=(\roman*)]
\item If $\operatorname{dist}_p(x_0,X^*)\le R$, a deterministic method
returns a queried point $\widehat x$ with
$\|\nabla f(\widehat x)\|_q\le\eps$ using
\begin{equation}\label{eq:pgtwo-known-upper}
 O_p\!\left(1+\left(\frac{LR}{\eps}\right)^{p/(p+2)}\right)
\end{equation}
pair-oracle calls, with no dimension-dependent factor.
\item For every $d\ge2$, every integer $1\le T\le d$, and every
deterministic exact-pair algorithm, there is an admissible objective with
$\operatorname{dist}_p(x_0,X^*)=R$ such that every one of its first $T$
queries satisfies
\begin{equation}\label{eq:pgtwo-known-lower}
 \|\nabla f(x_t)\|_q
 \ge \frac{LR}{512M_{p,d}T^{1+2/p}},
 \qquad
 M_{p,d}\le C\min\{p,\ln d\}.
\end{equation}
Consequently, whenever the selected integer horizon is at most $d$, the
query lower bound is
\begin{equation}\label{eq:pgtwo-known-lower-count}
 \Omega\!\left(
 \left[\frac{LR}{\eps\min\{p,\ln d\}}\right]^{p/(p+2)}
 \right).
\end{equation}
\end{enumerate}
For fixed $p$, if $d$ is large enough that the selected horizon fits and
$p\le3\ln d$, the two sides have the same polynomial exponent $p/(p+2)$ in
$K=LR/\eps$.
\end{proposition}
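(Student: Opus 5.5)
The plan has two parts, matching (i) and (ii).

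For (i), we use the two-phase construction described in the introduction, with known $L$ and $R$. Phase one runs an accelerated method adapted to the $p$-uniformly convex regularizer $h(x)=\|x-x_0\|_p^p/p$. That is, a $p$-power accelerated mirror-descent scheme on $f$ with distance-generating function $h$, stopped once the optimality gap is $O(\eps R)$ rather than the $\eps^2/L$ required by regularization reductions. Since the $p$-power Bregman term at $x^*$ is $O_p(R^p)$ and smooth acceleration in $p$-uniformly convex geometry gives a gap of order $LR^2/N_1^{2}$ after suitable normalization, the first phase costs about $N_1\asymp (LR/\eps)^{1/2}$ calls. This is below the target exponent whenever $p>2$, since $p/(p+2)>1/2$.

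Phase two is the finite anti-diagonal mirror dual of a horizon-$N$ accelerated scheme, with the conjugate $h^*(y)=\|y\|_q^q/q$ on the dual side, following the CFOM/anti-diagonal transform of \citet{KimParkOzdaglarDiakonikolasRyu2024}. We would prove a potential inequality of the form
\[
u_N\, h^*(\nabla f(x_N)) \le \bigl(f(x_0')-f^*\bigr)+\eta ,
\]
where $x_0'$ is the phase-one output and $\eta$ collects the mixed residual. In that residual, the $p$-power three-point inequality, unlike the Euclidean identity, leaves a term with a negative part. We bound this term using $L$-smoothness between consecutive queried pairs together with Young's inequality with exponents $p$ and $q$. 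Choosing the weights so that $u_N\asymp_p \eta^{(p-2)/p}N^{(p+2)/p}$ while $\eta=O(\eps R)$ gives
\[
\|\nabla f(x_N)\|_q^q \lesssim_p \frac{\eps R}{u_N}.
\]
Forcing this below $\eps^q$ and inverting gives $N\asymp_p (LR/\eps)^{p/(p+2)}$. The output $x_N$ is queried, and no dimension enters because every step uses only norm inequalities.

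The main obstacle is this weight design. We must show that the accumulated negative residual stays at $O(\eta)$ while $u_N$ retains the $N^{(p+2)/p}$ growth. We would first try a plateau-then-polynomial schedule and check the recursion $u_{k+1}-u_k\lesssim u_k^{(p-1)/p}\eta^{\cdots}$ inductively. If that fails, we would fall back to restarting phase two geometrically. This fallback costs only constant factors because the exponents sum geometrically.

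For (ii), we invoke the published lower bound of \citet{DiakonikolasGuzman2024}, which rests on the Guzm\'an--Nemirovski local smoothing and resisting-oracle completion \citep{GuzmanNemirovski2015} and the radial cap of \citet{DiakonikolasGuzman2020}. That bound gives, for $T\le d$, an objective whose first $T$ queries have gradient norm at least $LR/(512M_{p,d}T^{1+2/p})$, with smoothing constant $M_{p,d}\le C\min\{p,\ln d\}$. Our task is to verify that their construction fits the exact-pair, queried-output model: values are supplied alongside gradients, and $x_0$ counts as the first query. We would then rescale so that $\operatorname{dist}_p(x_0,X^*)=R$ exactly.

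Finally, setting the right-hand side of \eqref{eq:pgtwo-known-lower} to exceed $\eps$ and solving for $T$ gives \eqref{eq:pgtwo-known-lower-count}. When $p\le 3\ln d$, the quantity $\min\{p,\ln d\}$ is $\Theta_p(1)$, so the lower and upper bounds share the exponent $p/(p+2)$.
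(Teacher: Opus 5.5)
The architecture for (i) matches the paper: Phase~I drives the gap to order $\eps D$, then the anti-diagonal dual runs with $h^*=\|\cdot\|_q^q/q$. Your Young-type control of the negative residual is also fine, since it yields the same per-step bound $(d_k^2/u_k)^{p/(p-2)}$. The problem is that both quantitative steps that carry the exponent are wrong or missing.

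\emph{Phase~I rate.} Phase~I cannot reach a dimension-free gap $O(LR^2/N^2)$ with the $p$-power prox. The per-query function-gap lower bound rules this out: \Cref{lem:above-lower-gap}, after rescaling, gives gap $\gtrsim LR^2/(M_{p,d}T^{1+2/p})$ for $T\le d$. The mismatch between quadratic smoothness and $p$-uniform convexity already creates the mixed residual in Phase~I. The paper handles it exactly as in Phase~II and obtains $F(x_N^{\rm P})-\inf F\le (h(z)+\mathcal E_N)/u_N\le H_pN^{-(p+2)/p}$. So Phase~I costs $\asymp\delta^{-p/(p+2)}$ with $\delta=\eps/(LR)$, the same order as Phase~II. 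This does not hurt the total, but your stated justification is false.

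\emph{Phase~II weight balance.} The weight design you defer as ``the main obstacle'' is the real content, and your bookkeeping mixes two error scales. The paper keeps quadratic plateaued weights $u_k=\gamma(k+1)^2$, so $d_k^2/u_k\le4\gamma$ and $\mathcal E_N\le B_p\gamma^{p/(p-2)}N$. It then sets $\gamma=(\eta/(2B_pN))^{(p-2)/p}$, which gives $\mathcal E_N\le\eta/2$ and $u_N=c_p\eta^{(p-2)/p}N^{(p+2)/p}$. Here $\eta$ bounds $\mathcal E_N$ in $h^*(\nabla F)\le\Delta/u_N+\mathcal E_N$, so it is a budget on the $h^*$ scale. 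Phase~II must therefore take $\eta_D=\delta^q/q$, and the identity $q(1+a)=2$ with $a=(p-2)/p$ then gives $N_D\asymp\delta^{-p/(p+2)}$.

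In your display the error sits beside the gap, so it is $u_N\mathcal E_N$. You set it to $O(\eps R)$ while reusing $u_N\asymp\eta^{(p-2)/p}N^{(p+2)/p}$. In normalized units this would give $N\asymp\delta^{-(p^2-2p+2)/((p-1)(p+2))}$. That exponent is strictly below $p/(p+2)$ and would contradict (ii), so your inversion does not follow from the relations you wrote.

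For (ii), citing Diakonikolas--Guzm\'an and then ``verifying'' the model skips exactly what the paper has to prove. The step you treat as routine, rescaling so that $\operatorname{dist}_p(x_0,X^*)=R$ exactly, is the delicate one.

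\emph{Radius normalization.} The resisting function is built adaptively: $\sigma(t)$ and $\xi_t$ are chosen after each query. The optimizer distance of the completed $\bar f$ is therefore a priori a function of the whole completion. Rescaling by it would make early oracle answers depend on future adversarial choices. The paper avoids this through signed-permutation invariance of $\varphi$, $G_p$ and $\|\cdot\|_p$. Every completion is an isometric image of one canonical function, so $r_T$ is completion-independent. An averaging argument gives $r_T>1/4$, and the radial cap gives $r_T<4$; this upper bound is where $512=4\cdot128$ comes from.

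\emph{Exact pair completion.} You must show the completed function reproduces every returned value--gradient pair at unrestricted queries. This works because future pieces are dominated on all of $z_t+\chi_TG_p$ when $\chi_T=\delta_T/2$, combined with locality of the smoothing.

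\emph{Gradient at every query.} You must convert the per-query gap into a gradient bound at every query. For $\|x\|_p\ge4$ the gradient has norm exactly $\beta_T$, so every minimizer lies inside that ball. For queries inside it, convexity with $\|z_t-z^*\|_p<8$ finishes the argument.

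The paper states explicitly that the published benchmark alone is not this same-model theorem.
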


The lower exponent, smoothing factor, and horizon/dimension restrictions are
the lower benchmark of Diakonikolas--Guzm\'an
\citep[Sec.~4, Thm.~4 and Cor.~2]{DiakonikolasGuzman2024}.  The proof in
\Cref{sec:above} supplies the dimension-free upper construction and gives the
exact value--gradient completion needed for unrestricted queried outputs.

We now return to the post-initialization secant model.  The same finite local
routine will be run at observable trial values $M,D$, rather than at the
unknown $L,R$ used in \Cref{prop:pgtwo-optimality}.

\begin{theorem}[Parameter-free gradient minimization in $\ell_p$]\label{thm:main}
There is an explicit deterministic method whose numerical inputs are only
\[
 p,d,\eps,x_0,z_0,M_0
\]
and which returns an actually queried point $\widehat x$ satisfying
$\norm{\grad f(\widehat x)}_q\le\eps$.  Its post-initialization pair-oracle
count satisfies
\begin{align}
 N_{\rm or}
 &\le C_p\Kbar^{1/2}
      +C_p\log\!\left(e+\frac{L}{M_0}\right),
 &&1<p<2,\label{eq:mainbelow}\\
 N_{\rm or}
 &\le C\Kbar^{1/2}
      +C\log\!\left(e+\frac{L}{M_0}\right),
 &&p=2,\label{eq:maineuclidean}\\
 N_{\rm or}
 &\le C_p\Kbar^{\frac{p}{p+2}}
      +C_p\log\!\left(e+\frac{L}{M_0}\right),
 &&2<p<\infty.\label{eq:mainabove}
\end{align}
Here $C$ is universal and $C_p$ depends only on the fixed exponent $p$;
none of these constants has polynomial dependence on $d$.
\end{theorem}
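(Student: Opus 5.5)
The plan is to assemble the method from three components, following the structure already announced in the introduction: a secant-based curvature calibration, a geometry-specific local trial $\cT(M,D)$, and a guess-and-check controller over the pair $(M,D)$.

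\emph{Calibration.} Starting from $M_0\le L$, we double the curvature guess, beginning at $M=M_0$, whenever a consecutive smoothness test $\mathsf C_M$ fails on a queried pair. Each failure certifies $M<L$ and occurs only while $M<L$. Hence the number of scale increases is at most $\log_2(2L/M_0)+1$, and every accepted scale satisfies $M_{\rm a}\le 2L$. This step produces the additive $O_p(\log(e+L/M_0))$ term.

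\emph{Local trial and three-outcome interface.} For each geometry we build a finite trial at $(M,D)$ with three properties. It queries every consecutive pair before testing it. It returns $\Success$ with a queried $\widehat x$ satisfying $\|\nabla f(\widehat x)\|_q\le\eps$, or $\Scale$ (an observed test failure, so $M<L$), or $\Radius$. Its cost is $O_p((MD/\eps)^{\alpha_p})$, where $\alpha_p=1/2$ for $p\le2$ and $\alpha_p=p/(p+2)$ for $p>2$.

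The key implication to prove is that if $D\ge R$ and all tests pass, then the trial succeeds. This makes a $\Radius$ outcome a proof that $D<R$. The three regimes are handled separately.
\begin{itemize}[leftmargin=2.2em]
\item For $1<p<2$, we run guarded AMD followed by the anti-diagonal mirror dual of Kim--Park--Ozdaglar--Diakonikolas--Ryu. All smoothness inequalities are replaced by the observed $\mathsf C_M$ tests, so the $O_p(MD/N^2)$ terminal-gradient bound holds at trial values.
\item For $p=2$, we use the two-stage OGM/OGM-G construction, certified through the Taylor--Hendrickx--Glineur interpolation inequalities checked at scale $M$.
\item For $p>2$, we use the two-phase construction with gap $O(\eps D)$ and the weights $u_N\asymp_p\eta^{(p-2)/p}N^{(p+2)/p}$. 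The oriented Bregman test replaces smoothness, and the accumulated negative residual is kept below $\eta$. This is also the proof of Proposition~\ref{prop:pgtwo-optimality}(i) with $M=L$, $D=R$.
\end{itemize}

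\emph{Controller and summation.} The controller fixes $M$ and starts with $D$ at an observable initial radius, for example $\eps/M$. On $\Radius$ it doubles $D$. On $\Scale$ it doubles $M$ and resets $D$. On $\Success$ it stops.

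Since $D$ is increased only after $D<R$ has been proved, every visited $D$ satisfies $D\le 2\max\{R,D_{\rm init}\}$. Summing the geometric series over radii at fixed $M$ gives $O_p((MR/\eps)^{\alpha_p}+1)$ up to constants. The reset costs an additive $O(1)$ per scale level, and that count is already absorbed in the logarithm.

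Summing next over curvature levels $M=M_02^k\le2L$ gives a second geometric series, dominated by its last term $O_p(\Kbar^{\alpha_p})$, plus $O(1)$ per level. Adding the two sums yields \eqref{eq:mainbelow}--\eqref{eq:mainabove}. The constants depend only on $p$ because the mirror maps used are the dimension-free $\frac1{p}\|\cdot\|_p^p$-type or uniformly convex $\frac{1}{2(p-1)}\|\cdot\|_p^2$ maps for $p<2$. This avoids any $\ln d$ factor.

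\emph{Main obstacle.} We expect the hardest step to be the conditional-success implication for $p>2$. With only pairwise Bregman tests at a trial curvature, the mixed residual in the $p$-power potential must be shown to accumulate at most $\eta\asymp\eps D$ while still giving the $N^{-(p+2)/p}$ terminal bound at a queried point. The first attempt would be to bound each negative residual term by Young's inequality against the per-step potential decrease, and then choose the plateau weights so the resulting sum telescopes.
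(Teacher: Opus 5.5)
Your overall architecture (three-outcome local trials, scale-then-radius doubling, two geometric sums) matches the paper's. However, the step that carries the new $p>2$ exponent would fail as you describe it. You propose to keep the accumulated mixed residual at $\eta\asymp\eps D$, i.e.\ $\eta\asymp\delta=\eps/(MD)$ in the normalized units of $F_{M,D}$. In Phase~I the bound is $F(x_N^{\rm P})-\inf F\le(h(z)+\mathcal E_N)/u_N$ with $u_N=c_p\eta^{(p-2)/p}N^{(p+2)/p}$. With $\eta\asymp\delta$ this forces $N^{(p+2)/p}\gtrsim\delta^{-1-(p-2)/p}$, i.e.\ $N_F\asymp\delta^{-2(p-1)/(p+2)}$, which is exactly the older Diakonikolas--Guzm\'an exponent the paper improves on. The two phases need different budgets. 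In Phase~I the residual may be as large as $h(z)$: the paper takes $\eta_F=1/p$, physically of order $MD^2$, which is $\kappa$ times larger than $\eps D$. Then $u_N\asymp_pN^{(p+2)/p}$ has no $\delta$-dependent prefactor and $N_F\asymp_p\delta^{-p/(p+2)}$. In Phase~II the budget must sit at the $h^*$-value of the target gradient, $\eta_D=\delta^q/q$; a budget of order $\delta$ would already exceed this target. The identity $q\bigl(1+\tfrac{p-2}{p}\bigr)=2$ then turns the requirement $N^{(p+2)/p}\ge2\Delta/(c_p\eta_D^{1+(p-2)/p})$, with $\Delta\le\delta$, into $N_D\asymp_p\delta^{-p/(p+2)}$. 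Two further corrections in the same step. First, the negative residual does not telescope. Young's inequality against $\frac{u_k}{2}X^2+a_pY^p$ gives $\rho_k\ge-\kappa_p(d_k^2/u_k)^{p/(p-2)}\ge-\kappa_p(4\gamma)^{p/(p-2)}$ for $u_k=\gamma(k+1)^2$; this accumulates linearly in $N$ and is balanced through the scale $\gamma$, not the shape of the weights. Second, for Phase~II you cannot cite the Kim et al.\ mirror-duality theorem, since it needs a quadratically strongly convex $h$. You must observe that this per-step lower bound holds for arbitrary free vectors and transport it through the pointwise anti-diagonal bijection, which gives $\mathsf R_D^\Omega\ge-\mathcal E_N$ and hence $h^*(\grad F(q_N^{\rm D}))\le\Delta/u_N+\mathcal E_N$.

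Your controller is a valid alternative to the paper's. The paper's norming-ray calibration also certifies a radius, $D_{\rm a}=G/M_{\rm a}\le2R$, by comparing $f^*\ge f_0-GR$ with the accepted decrease $GD_{\rm a}/2$. Each epoch restarts at $D=G/M$, so every epoch begins at $\kappa=G/\eps\le2M_{\rm a}R/\eps$, the epoch costs are purely terminal-dominated, and the only logarithm comes from calibration. You restart at $D=\eps/M$, i.e.\ $\kappa=1$, which the trial analyses tolerate since they only use $\kappa\ge1$ to absorb constants. This needs no radius certificate but pays $O(1)$ per epoch. Starting from $M_0$ there are fewer than $\log_2(L/M_0)+2$ epochs, so the extra cost is absorbed into $C\log(e+L/M_0)$, and your separate calibration step (whose tested pair you never specify) can be dropped. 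One omission in the $p=2$ branch: the OGM-G certificate needs $f(u_n)-f^*-\norm{\grad f(u_n)}_2^2/(2M)\ge0$, which the pairwise interpolation checks on the queried $u_i$ do not imply when $M<L$. The paper queries $v_n=u_n-\grad f(u_n)/M$ and checks terminal descent. Without such a guard, ``$D\ge R$ and all guards pass'' does not force success, so a $\Radius$ outcome would not certify $D<R$.
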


This theorem resolves Diakonikolas's general-$\ell_p$ parameter-free extension
question for every fixed $1<p<\infty$
\citep[p.~2]{Diakonikolas2025OpenProblems}.  Below two, it combines the prior
known-$L$ AMD/mirror-dual dynamics with a finite guarded trial and the
unknown-$L,R$ controller; for $p>2$,
\Cref{prop:pgtwo-optimality} supplies the sharp local polynomial exponent and
the controller preserves it while removing $L$ and $R$ from the primitive
inputs.

\paragraph{Scope.}
The theorem concerns one fixed $1<p<\infty$, convex differentiable
objectives, queried output, and the initialization \eqref{eq:M0}.  For
$1<p<2$, the upper bound is dimension-uniform and matches the exponent
$1/2$ in $K$ of the deterministic benchmark in
\citet[Cor.~2]{DiakonikolasGuzman2024}; that benchmark retains $\ln d$ and
dimension/horizon qualifications, and its radius parameterization is not an
unqualified same-model lower theorem for unrestricted queried outputs.  For
$p>2$, the matching statement concerns the
deterministic polynomial exponent in $K$; the lower construction retains the conditions $T\le d$ and
$\min\{p,\ln d\}$, and the simplified fixed-$p$ comparison assumes
sufficiently large $d$.  Randomized optimality, exact dimension or logarithmic
dependence, the endpoints $p=1,\infty$, nonconvex objectives, arbitrary
unqueried outputs, and bit complexity remain outside the result.  The strict
information theorem and the parameter-free theorem use the two information
conventions defined above.

\subsection{Proof structure of the parameter-free theorem}
\label{sec:main-proof-architecture}

The proof of \Cref{thm:main} separates two hidden quantities.  A local smoothness model depends on
the unknown curvature scale $L$, while each local success proof is
conditional on a trial distance exceeding the unknown solution distance
$R$.  An independent two-dimensional
guess grid would multiply the cost of the local routine.  Instead, the three
returned outcomes let each failed trial identify the single coordinate that
must change.

The geometry-specific normalization of Diakonikolas--Guzm\'an is
\begin{equation}\label{eq:dg-geometry}
 \psi_p(x)=
 \begin{cases}
 \dfrac{1}{2(p-1)}\norm{x-x_0}_p^2,&1<p\le2,\\[2mm]
 \dfrac1p\norm{x-x_0}_p^p,&p>2,
 \end{cases}
\end{equation}
The factor $1/[2(p-1)]$ below two makes the squared $\ell_p$ prox
one-strongly convex, while $1/p$ above two is the standard power
normalization.  In the local proofs we sometimes pull these constants outside
the prox function; \eqref{eq:dg-geometry} gives the normalization used by
Diakonikolas--Guzm\'an \citep[Eq.~(18), Thm.~3]{DiakonikolasGuzman2024}.
The initial secant calibration and the scale--distance guess-and-check adapt
the parameter-free procedure of Lan--Ouyang--Zhang
\citep{LanOuyangZhang2026}.  Ito and Fukuda provide a related
adaptive-regularization predecessor for unknown-distance gradient-norm
guarantees \citep{ItoFukuda2021}.  The three local outcomes satisfy
\[
 \Success \quad\text{or}\quad
 \Scale\Rightarrow M<L \quad\text{or}\quad
 \Radius\Rightarrow D<R.
\]
The proof proceeds as follows:
\[
\begin{array}{c}
\boxed{\text{secant input }(z_0,M_0)}
\ \Longrightarrow\
\boxed{\text{calibrated pair }(M_{\rm a},D_{\rm a})}\\[1mm]
\Downarrow\\[-1mm]
\boxed{\text{controller with }\Success,\Scale,\Radius}
\ \Longleftarrow\
\boxed{\text{one finite local routine for each }p\text{-regime}}\\[1mm]
\Downarrow\\[-1mm]
\boxed{\text{radius sum inside each scale epoch}}
\ \Longrightarrow\
\boxed{\text{scale-epoch sum and Theorem~\ref{thm:main}}}.
\end{array}
\tag{PA}
\]
This diagram also fixes the logical order: the controller in
\Cref{sec:certification} is proved before the three local routines are plugged
into it, and the two geometric sums are postponed to \Cref{sec:global}.

\begin{table}[ht]
\centering
\small
\caption{Results in Sections~3--7 used in the proof of \Cref{thm:main}.}
\label{tab:proof-modules}
\begin{tabularx}{\textwidth}{@{}p{0.12\textwidth}p{0.27\textwidth}YY@{}}
\toprule
Section & Result & Inputs and outcomes & Role in the theorem \\
\midrule
\ref{sec:certification} & Scale and radius adaptation
& Secant input and guarded local trial $\mapsto$ calibrated values or one of
$\Success,\Scale,\Radius$ & Identifies which guess must increase \\
\ref{sec:below} & Below-two geometry
& Two guarded mirror phases with a queried terminal point and
$O_p(\kappa^{1/2})$ cost & Supplies \eqref{eq:mainbelow} locally \\
\ref{sec:euclidean} & Euclidean geometry
& $p=2$ finite-data trial with $O(\kappa^{1/2})$ cost
& Supplies \eqref{eq:maineuclidean} locally \\
\ref{sec:above} & Above-two geometry
& Two finite mirror phases with consecutive-pair tests and
$O_p(\kappa^{p/(p+2)})$ cost
& Proves \Cref{prop:pgtwo-optimality} and supplies
\eqref{eq:mainabove} locally \\
\ref{sec:global} & Cost summation and completion
& Visited trial pairs $\mapsto$ two terminal-dominated sums
& Adds calibration overhead and assembles \Cref{thm:main} \\
\bottomrule
\end{tabularx}
\end{table}

Two implications are used throughout.  A $\Scale$ or $\Radius$ label is returned
only with the displayed implication, so the controller never guesses which
hidden coordinate failed.  The above-two section also proves the compatible
deterministic lower benchmark in \Cref{prop:pgtwo-optimality}.  The three
local bounds are combined in \Cref{sec:global}.

\section{Scale and radius adaptation}
\label{sec:certification}

For $g\ne0$, define the $\ell_p$ norming vector
\begin{equation}\label{eq:normingvector}
 [v(g)]_i:=\frac{\operatorname{sgn}(g_i)|g_i|^{q-1}}
 {\norm g_q^{q-1}}.
\end{equation}
Then
\begin{equation}\label{eq:normingidentities}
 \norm{v(g)}_p=1,\qquad \ip g{v(g)}=\norm g_q.
\end{equation}
Smoothness implies
\begin{equation}\label{eq:descent}
 f(y)\le f(x)+\ip{\grad f(x)}{y-x}+\frac L2\norm{y-x}_p^2.
\end{equation}

We use the observable predicates
\begin{align}
 \mathsf U_M(x,y):\quad&
 f(y)\le f(x)+\ip{\grad f(x)}{y-x}+\frac M2\norm{y-x}_p^2,
 \label{eq:upperguard}\\
 \mathsf G_M(x,y):\quad&
 \norm{\grad f(y)-\grad f(x)}_q\le M\norm{y-x}_p.
 \label{eq:gradientguard}\\
 \mathsf C_M(x,y):\quad&
 D_f(x,y)\ge
 \frac{\norm{\grad f(x)-\grad f(y)}_q^2}{2M},
 \qquad
 D_f(x,y):=f(x)-f(y)-\ip{\grad f(y)}{x-y}.
 \label{eq:cocoercivityguard}
\end{align}
Failure of any predicate used by a regime proves $M<L$.  The upper-model/backtracking logic
follows standard universal and adaptive constructions
\citep{Nesterov2015Universal,LanOuyangZhang2026,ItoFukuda2021}; here each
failed test is also returned to the controller.

For the Banach smooth-convex inequality underlying
\eqref{eq:cocoercivityguard}, see \citet[App.~A, Lem.~12]{CohenSidfordTian2021}; for the exact equivalence used with mirror duality here, see
\citet[Lem.~1(iii)]{KimParkOzdaglarDiakonikolasRyu2024}.  Standard
convex-conjugacy background is given by
\citet[Sec.~3.5]{Zalinescu2002}.  The same orientation of the finite quotient
appears in the A2GD line-search test of \citet[Eq.~(10)]{XuChen2026A2GD}.
Indeed, applying the descent inequality to
$f(\cdot)-\langle\nabla f(y),\cdot\rangle$ and minimizing its quadratic upper
model gives
\begin{equation}\label{eq:truecocoercivity}
 D_f(x,y)\ge
 \frac{\norm{\nabla f(x)-\nabla f(y)}_q^2}{2L}.
\end{equation}
Thus $M\ge L$ implies every $\mathsf C_M(x,y)$, and the contrapositive is
\begin{equation}\label{eq:cocoercivity-scale}
 \neg\mathsf C_M(x,y)\quad\Longrightarrow\quad M<L.
\end{equation}
The controller uses only this implication; the regime-specific finite
two-phase success arguments are proved in \Cref{sec:below,sec:above}.

When $p=2$, we additionally use the ordered finite-data interpolation
predicate
\begin{equation}\label{eq:interpguard}
 \mathsf I_M(i,j):\quad
 f_i-f_j-\ip{g_j}{x_i-x_j}
 -\frac1{2M}\norm{g_i-g_j}_2^2\ge0.
\end{equation}
This is the smooth-convex specialization of the exact interpolation
conditions of Taylor--Hendrickx--Glineur
\citep[Cor.~1]{TaylorHendrickxGlineur2017}.  Whenever $M\ge L$, it follows
directly by applying \eqref{eq:descent} to
$f(\cdot)-\ip{\grad f(x_j)}{\cdot}$ and minimizing the quadratic upper model.

All minimizations below involve only explicit norm powers, affine functions,
and returned oracle data.  They are deterministic internal arithmetic and do
not make uncounted calls to $f$.

\subsection{Initial calibration along a norming ray}

Query $x_0$ and write
\[
 f_0=f(x_0),\qquad g_0=\grad f(x_0),\qquad G=\norm{g_0}_q.
\]
Return $x_0$ if $G\le\eps$; henceforth suppose $G>\eps$.  At a closest
minimizer $x^*$, $\grad f(x^*)=0$, so
\begin{equation}\label{eq:Gbound}
 G\le LR,
\end{equation}
and $K>1$.

\begin{lemma}[Initial scale and radius calibration]\label{lem:anchor}
Starting from $M=M_0$, set
\[
 D=G/M,\qquad y=x_0-Dv(g_0),
\]
query $y$, and test
\begin{equation}\label{eq:anchortest}
 f(y)\le f_0-\frac{GD}{2}.
\end{equation}
Double $M$ after a failed test.  The first accepted values
$M_{\rm a}$ and $D_{\rm a}=G/M_{\rm a}$ satisfy
\begin{equation}\label{eq:anchorconclusions}
 M_{\rm a}<2L,\qquad D_{\rm a}\le2R.
\end{equation}
The loop uses at most
$1+\lceil\log_2(L/M_0)\rceil$ pair-oracle calls.
\end{lemma}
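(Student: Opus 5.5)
The plan is to prove both conclusions in \eqref{eq:anchorconclusions} from two one-line inequalities. An upper estimate from the descent inequality \eqref{eq:descent} shows the test \eqref{eq:anchortest} must pass once $M\ge L$; this gives the scale bound and the iteration count. A lower estimate from convexity and the existence of a minimizer shows any accepted test forces $D\le 2R$; this argument uses no information about $M_{\rm a}$. Throughout we use $G>\eps>0$, so $g_0\ne0$ and $v(g_0)$ is defined.

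\emph{Step 1: the test passes whenever $M\ge L$.} Fix a trial $M$, so that $D=G/M$ and $y-x_0=-Dv(g_0)$. By \eqref{eq:normingidentities} we have $\|y-x_0\|_p=D$ and $\ip{g_0}{y-x_0}=-DG$. Substituting into \eqref{eq:descent} gives
\[
 f(y)\le f_0-DG+\frac L2D^2=f_0-DG\Bigl(1-\frac{L}{2M}\Bigr),
\]
using $D=G/M$. When $M\ge L$ the bracket is at least $1/2$, so \eqref{eq:anchortest} holds. Equivalently, a failed test certifies $M<L$, which matches the controller's $\Scale$ semantics.

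\emph{Step 2: scale bound and count.} The $\ell_q$--$\ell_p$ smoothness \eqref{eq:smoothness} gives $M_0\le L$. The trials are $M_0 2^k$ for $k=0,1,\dots$. If $M_0\ge L$ (so $M_0=L$), the first test passes and $M_{\rm a}=M_0\le L<2L$. Otherwise, by Step~1 every rejected trial satisfies $M_02^k<L$, so acceptance occurs no later than the first $k$ with $M_02^k\ge L$. Hence $M_{\rm a}$ is either the first value $\ge L$, whose predecessor is $<L$ and so $M_{\rm a}<2L$, or an earlier value that is already $<L$. The number of rejected trials is at most $\lceil\log_2(L/M_0)\rceil$. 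Each trial costs exactly one query, of $y$; the query of $x_0$ is the common initial query already made before the lemma. This gives at most $1+\lceil\log_2(L/M_0)\rceil$ calls.

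\emph{Step 3: radius bound.} Let $x^*\in X^*$ attain $R=\|x_0-x^*\|_p$. Convexity and Hölder's inequality give
\[
 f_0-f(x^*)\le\ip{g_0}{x_0-x^*}\le G R.
\]
At acceptance, $f(y)\ge f(x^*)$, so
\[
 \frac{GD_{\rm a}}2\le f_0-f(y)\le f_0-f(x^*)\le GR.
\]
Dividing by $G>0$ yields $D_{\rm a}\le2R$.

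The only delicate point is bookkeeping in Step~2: confirming that $M_0\le L$ (which follows from \eqref{eq:smoothness}) and that the $x_0$ query is not double-counted. No genuine analytic obstacle is expected.
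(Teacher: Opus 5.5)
Your proposal is correct and follows essentially the same route as the paper: the descent inequality at the norming-ray point shows the test must pass once $M\ge L$, which gives $M_{\rm a}<2L$ and the dyadic query count, and convexity at $x_0$ with H\"older at a closest minimizer, compared against the accepted decrease, gives $D_{\rm a}\le 2R$. The only cosmetic difference is that you substitute $L$ directly into the descent bound, obtaining the factor $1-L/(2M)$, whereas the paper dominates the true bound by the $M$-model value $f_0-GD/2$.
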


The gradient-directed test follows the Euclidean calibration in
\citep[Lem.~4.1 and Algs.~4.1, 4.3]{LanOuyangZhang2026}.  The norming ray gives
its general-norm form, and the constants in \eqref{eq:anchorconclusions} connect it
to the controller.

The proof uses the choice $D=G/M$ to balance the linear decrease $GD$ predicted by the
gradient with the quadratic model penalty $MD^2/2$.  We first show that this
balance makes the tested inequality automatic once $M\ge L$, which proves
finite acceptance and the bound $M_{\rm a}<2L$.  We then compare the accepted
decrease with the affine lower bound supplied by convexity at a closest
minimizer; the comparison eliminates the unknown value $f^*$ and yields
$D_{\rm a}\le2R$.

\begin{proof}
The norming direction satisfies
$\|v(g_0)\|_p=1$ and $\langle g_0,v(g_0)\rangle=G$.  Hence the point
$y=x_0-Dv(g_0)$ obeys
\[
 \norm{y-x_0}_p=D,
 \qquad
 \ip{g_0}{y-x_0}=-GD.
\]
The natural balance for a model with curvature $M$ is
\begin{equation}\label{eq:anchorbalance}
 GD=MD^2,
 \qquad\text{equivalently}\qquad D=\frac GM.
\end{equation}
Substituting these identities into the quadratic model gives
\[
 f_0+\ip{g_0}{y-x_0}+\frac M2\norm{y-x_0}_p^2
 =f_0-GD+\frac M2D^2=f_0-\frac{GD}{2}.
\]
If $M\ge L$, the true descent inequality \eqref{eq:descent} is bounded above
by this $M$-model, so \eqref{eq:anchortest} must pass.  Since the loop starts
at $M_0\le L$ and doubles only after rejection, the first accepted value is
either $M_0$ or twice a rejected value below $L$.  In both cases
$M_{\rm a}<2L$.  The number of queried ray points is at most
\[
 1+\left\lceil\log_2\frac{L}{M_0}\right\rceil,
\]
because after that many tested dyadic values the current scale is at least
$L$ and must be accepted.

It remains to certify a radius from the same accepted observation.  Choose
$x^*\in X^*$ with $\norm{x^*-x_0}_p=R$.  Convexity at $x_0$, followed by
H\"older's inequality, gives the affine lower bound
\[
 f^*\ge f_0+\ip{g_0}{x^*-x_0}\ge f_0-GR.
\]
On the other hand, acceptance at $(M_{\rm a},D_{\rm a})$ gives
\[
 f(y)\le f_0-\frac{GD_{\rm a}}2.
\]
Since $f(y)\ge f^*$, the two bounds can hold simultaneously only if
\[
 f_0-GR\le f_0-\frac{GD_{\rm a}}2,
 \qquad\text{hence}\qquad D_{\rm a}\le2R,
\]
where division by $G$ is legal because the trivial case $G\le\eps$ was
already returned.  This proves both conclusions in
\eqref{eq:anchorconclusions}.
\end{proof}

Thus one accepted, queried decrease supplies both displayed bounds and
initializes the controller.  The norming ray is the $\ell_p$ realization
of the Euclidean calibration logic of Lan--Ouyang--Zhang.

\subsection{The controller}

For the fixed $p$, let $\cT_p(M,D)$ denote the appropriate guarded local
trial from \Cref{sec:below,sec:euclidean,sec:above}.  The secant calibration
and distance-scale guess-and-check adapt Lan--Ouyang--Zhang
\citep[Sec.~4]{LanOuyangZhang2026}.  Ito and Fukuda give a related
unknown-distance adaptive-regularization construction
\citep[Alg.~3, Thm.~4.2]{ItoFukuda2021}.  The local routines return the three
outcomes below.

\begin{algorithm}[H]
\caption{Scale/radius controller}\label{alg:controller}
\begin{algorithmic}[1]
\Require $p,d,\eps,x_0,z_0,M_0$ and the pair oracle for $f$
\State Query $x_0$; if $\norm{\grad f(x_0)}_q\le\eps$, return $x_0$
\State Run \Cref{lem:anchor} and obtain $M_{\rm a},D_{\rm a}$
\State $M\gets M_{\rm a}$
\While{true}
  \State $D\gets \norm{\grad f(x_0)}_q/M$
  \While{true}
    \State $o\gets\cT_p(M,D)$
    \If{$o=(\Success,\widehat x)$} \State \Return $\widehat x$ \EndIf
    \If{$o=\Radius$} \State $D\gets2D$ \EndIf
    \If{$o=\Scale$} \State $M\gets2M$; \State \textbf{break} \EndIf
  \EndWhile
\EndWhile
\end{algorithmic}
\end{algorithm}

\begin{proposition}[Implications of the three trial outcomes]\label{prop:certification}
Suppose each call $\cT_p(M,D)$ is made with $D\ge G/M$ and has exactly three
possible outcomes:
\begin{enumerate}[label=(\roman*)]
\item $\Success$, with a queried $x$ satisfying
      $\norm{\grad f(x)}_q\le\eps$;
\item $\Scale$, accompanied by an actually failed predicate among
      \eqref{eq:upperguard}, \eqref{eq:gradientguard},
      \eqref{eq:cocoercivityguard} for $p\ne2$, and, for $p=2$,
      \eqref{eq:interpguard} or the terminal descent guard
      \eqref{eq:terminaldescentguard};
\item $\Radius$, possible only after every guard passes and the final queried
      gradient exceeds $\eps$, with the proved implication $D<R$.
\end{enumerate}
Then \Cref{alg:controller} doubles only a certified deficient coordinate.
Every visited scale satisfies $M<2L$, every expensive visited radius satisfies
$D\le2R$, and the trial costs within each scale epoch and across scale epochs
are geometrically dominated by their terminal values.  In particular, the
search introduces no rectangular scale--radius multiplier.
\end{proposition}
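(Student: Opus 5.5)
The plan is to argue directly from the three outcome implications and the structure of \Cref{alg:controller}. First, certified doubling. A $\Scale$ outcome comes with an actually failed predicate. By \eqref{eq:descent}, \eqref{eq:truecocoercivity}, the Lipschitz bound \eqref{eq:smoothness}, and the interpolation remark after \eqref{eq:interpguard}, every such predicate holds whenever $M\ge L$. So the failed predicate certifies $M<L$ before $M$ is doubled. A $\Radius$ outcome certifies $D<R$ by hypothesis (iii) before $D$ is doubled. Hence only a certified deficient coordinate is ever doubled.

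Second, the invariants. The first scale is $M_{\rm a}<2L$ by \Cref{lem:anchor}. Every later scale is $2M'$ for some certified $M'<L$, so $M<2L$ for every visited scale. Within an epoch at scale $M$, $D$ starts at $G/M$, which meets the call requirement $D\ge G/M$. Each subsequent radius is $2D'$ with $D'<R$ certified, hence is at most $2R$.

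The initial radius of an epoch is $G/M$. I would call a radius \emph{expensive} if it exceeds this initial value. Such radii arise only from doubling, so they satisfy $D\le 2R$. The initial radius itself is bounded below via \eqref{eq:Gbound} but need not be bounded by $2R$ when $M$ is small. This is why the statement restricts to expensive radii. Its cost should instead be absorbed because $MD=G\le LR$, which makes the trial cost a function of $MD/\eps\le LR/\eps$.

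Third, geometric domination. I assume each local routine has cost $c_p\,\phi(MD/\eps)$, with $\phi(t)=1+t^{\alpha}$, $\alpha\in\{1/2,\,p/(p+2)\}$, as announced in \Cref{tab:proof-modules}.
\begin{itemize}
\item Within an epoch, the radii visited are $D_j=2^jG/M$ for $j=0,\dots,J$, with $D_J\le\max\{G/M,2R\}$. So $\sum_j(MD_j/\eps)^\alpha\le(1-2^{-\alpha})^{-1}(MD_J/\eps)^\alpha$.
\item The number of trials is $J+1$. I expect to absorb the additive $+1$ terms either into $\Kbar^\alpha$ or by noting $MD_J/\eps\ge G/\eps>1$, so that $1\le (MD_J/\eps)^\alpha$.
\item Across epochs, $M_k=2^kM_{\rm a}$ with $M_k<2L$. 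The terminal epoch value is at most $(M_k\max\{G/M_k,2R\}/\eps)^\alpha\le(\max\{LR,4LR\}/\eps)^\alpha$ up to constants.
\item Summing over epochs with the factor $2^{k\alpha}$ again gives a geometric series dominated by the last epoch, so the total is $O(\Kbar^\alpha)$.
\end{itemize}
In particular there is no product of the numbers of scale and radius levels: there is no rectangular multiplier.

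The main obstacle is the across-epoch sum. At a fixed epoch the terminal radius is $\max\{G/M_k,2R\}$, and its cost $M_k\max\{\cdot\}/\eps$ grows with $M_k$ only through the $2R$ branch. The $G/M_k$ branch gives a constant $G/\eps$ per epoch, which is not geometric. I would try to bound each epoch's cost by $C(M_kR/\eps+G/\eps)^\alpha$. The $M_kR$ parts then sum geometrically. The number of epochs is at most $1+\log_2(2L/M_{\rm a})$, so the $G/\eps$ parts contribute $\log_2(L/M_0)$ times $(G/\eps)^\alpha$. That product would not be harmless. So I will instead use the fact that a trial with $D=G/M<R$ must end in $\Scale$ or $\Radius$ quickly, or argue that the routine's cost is bounded by its own parameters without the additive constant. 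This is the step I would check most carefully against the local cost bounds in \Cref{sec:global}.
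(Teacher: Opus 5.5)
\textbf{There is a genuine gap, and it sits where you put your ``main obstacle.''} Your certification of the two failure labels and your scale invariant $M<2L$ are fine. The part of the statement with real content is that costs are geometrically dominated across epochs, with no rectangular multiplier. That part is left unproved, and the obstruction you found comes from a missing fact: you never use the radius half of \Cref{lem:anchor}. The controller starts at $M_{\rm a}$ and only doubles, so every epoch scale satisfies $M_s\ge M_{\rm a}$. Hence
\[
 \frac{G}{M_s}\le\frac{G}{M_{\rm a}}=D_{\rm a}\le 2R,
 \qquad\text{equivalently}\qquad
 \frac{G}{\eps}\le\frac{2M_{\rm a}R}{\eps}=2k_0\le 2k_s,
 \quad k_s:=\frac{M_sR}{\eps}=2^sk_0 .
\]
So your claim that the epoch-initial radius ``need not be bounded by $2R$ when $M$ is small'' is false. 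Your redefinition of ``expensive'' is also unnecessary. The reset radius is at most $2R$ by calibration, and every later radius is twice a certified $D'<R$, so every trial radius satisfies $D\le 2R$.

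\textbf{How the missing fact closes the sum.} The terminal dimensionless radius of epoch $s$ now obeys $\kappa_{s,J_s}\le 2k_s$ in both cases: $J_s=0$ by calibration, and $J_s\ge1$ by the preceding $\Radius$ certificate. Your within-epoch power sum then gives an epoch cost of $O_p(k_s^{\alpha})$. Since $k_s$ doubles from epoch to epoch and $k_{S_*}<2K$, the cross-epoch sum is a geometric series bounded by $O_p(K^{\alpha})$. The per-epoch term $(G/\eps)^\alpha$ that worried you is therefore not flat: it is at most $(2k_0)^\alpha\le(2k_s)^\alpha$, which is summable backward from the terminal epoch.

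\textbf{Neither of your fallbacks would have worked.} Bounding each reset trial by $(LR/\eps)^\alpha$ via $G\le LR$, then multiplying by the $O(\log(L/M_0))$ epochs, produces exactly the multiplicative logarithm the statement rules out. Nothing forces a trial at $D<R$ to stop early either. A $\Radius$ return is issued only after the full horizon with every guard passing, so it costs the full $T_p(\kappa)$.

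\textbf{A minor omission.} For $p=2$ you should also check explicitly that the terminal descent guard \eqref{eq:terminaldescentguard} holds when $M\ge L$. This follows by applying \eqref{eq:descent} to the step $v=u-\nabla f(u)/M$.
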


The proof has two logical directions.  First, each returned
failure must be impossible when $M\ge L$; its contrapositive then certifies
that only the scale coordinate is deficient.  Second, each fully guarded
nonsuccessful local proof establishes the forward implication
$D\ge R\Rightarrow\Success$; its contrapositive certifies that only the radius
coordinate is deficient.  We then propagate these two implications through
the update rule, paying special attention to the reset $D=G/M$ after a scale
increase.  The detailed geometric costs are deferred to
\Cref{lem:amortization}.

\begin{proof}
\emph{Scale returns.}
The descent inequality \eqref{eq:descent} implies $\mathsf U_M(x,y)$ whenever
$M\ge L$, while \eqref{eq:smoothness} implies $\mathsf G_M(x,y)$.  The
Banach inequality \eqref{eq:truecocoercivity} implies
$\mathsf C_M(x,y)$.  The
smooth-convex interpolation inequality preceding the calibration proves every
$\mathsf I_M(i,j)$ under the same hypothesis.  Finally, in the Euclidean
terminal test, $v=u-g/M$ with $g=\grad f(u)$, so \eqref{eq:descent} gives
\begin{align*}
 f(v)
 &\le f(u)-\frac1M\norm g_2^2
       +\frac{L}{2M^2}\norm g_2^2\\
 &\le f(u)-\frac1{2M}\norm g_2^2
 \qquad(M\ge L).
\end{align*}
Thus every possible predicate named in outcome~(ii) holds whenever $M\ge L$.
The forward implication is
\[
 M\ge L\quad\Longrightarrow\quad\text{no }\Scale\text{ outcome}.
\]
Its contrapositive is precisely
\begin{equation}\label{eq:scalecertificate}
 \Scale\quad\Longrightarrow\quad M<L.
\end{equation}
Accordingly the controller doubles the scale only after proving it deficient,
and the new value $2M$ is strictly below $2L$.

\emph{Radius returns.}
For each regime-specific trial, the local proof establishes the forward
implication
\begin{equation}\label{eq:radiusforward}
 D\ge R\ \text{ and every guard passes}
 \quad\Longrightarrow\quad
 \text{the final queried gradient is at most }\eps.
\end{equation}
Outcomes are exhaustive.  A completed guarded trial whose final queried
gradient is larger than $\eps$ therefore satisfies the explicit
contrapositive
\begin{equation}\label{eq:radiuscertificate}
 \Radius\quad\Longrightarrow\quad D<R.
\end{equation}
This is why the controller doubles the radius only after a $\Radius$ outcome.

\emph{Propagation along the visited pairs.}
At the accepted calibration value, the first radius is
$G/M_{\rm a}=D_{\rm a}\le2R$.  If the scale is later doubled from $M$ to
$2M$, the reset is essential:
\[
 D_{\rm new}=\frac{G}{2M}=\frac12\frac GM.
\]
It restores the invariant $D\ge G/M$ at equality and prevents an obsolete
large radius from being carried into the next scale epoch.  Because scales
only increase, every later epoch starts with a radius no larger than
$D_{\rm a}$.  Inside one epoch, the first radius is therefore at most $2R$;
every later radius is twice a predecessor satisfying $D<R$ by
\eqref{eq:radiuscertificate}, and is again at most $2R$.  Hence
\begin{equation}\label{eq:allradii}
 D\le2R
\end{equation}
for every expensive trial, while every visited scale is below $2L$.

Only one coordinate changes at a time: $\Radius$ changes $D$ with $M$ fixed,
and $\Scale$ changes $M$ and resets $D$.  The realized calls therefore form a
single sequence of visited pairs, not all pairs in a rectangular grid.  Along
the radius part of an epoch, $\kappa=MD/\eps$ doubles; after the scale has been
proved too small, the
reset begins the next epoch at $\kappa=G/\eps$.  The local cost functions are
monotone powers of $\kappa$, so both
sequences are geometrically summable.  \Cref{lem:amortization} carries out
the two sums with the stated exponents.  The nested Euclidean precedent is
\citep[Thm.~4.2]{LanOuyangZhang2026}; restart roles also appear in
\citep{ItoFukuda2021,RenegarGrimmer2022}.
\end{proof}

The local solvers therefore never return an undifferentiated failure: the
returned outcome identifies the coordinate proved to be too small.  The next
table separates the geometry-specific work inside each trial from the
three outcomes used by the controller.  In every row, a failed listed guard proves
$M<L$; a completed guarded trial with gradient larger than $\eps$ certifies
$D<R$.

\begin{table}[H]
\centering
\scriptsize
\setlength{\tabcolsep}{3.5pt}
\renewcommand{\arraystretch}{1.18}
\caption{Three geometry-specific routines and the outcomes used by the
controller.}
\label{tab:three-regimes}
\begin{tabularx}{\textwidth}{@{}P{0.08\textwidth}P{0.17\textwidth}P{0.19\textwidth}P{0.19\textwidth}P{0.14\textwidth}Y@{}}
\toprule
Regime & Geometry and regularizer & Local target and calculation & Failed guards yielding $\Scale$ & $\Radius$ implication & Local cost for $\kappa=MD/\eps$ \\
\midrule
$1<p<2$
& One-strong convexity of $\|\cdot\|_p^2/[2(p-1)]$ and its conjugate mirror map
& Guarded AMD value-gap phase followed by the finite anti-diagonal mirror-dual terminal-gradient phase
& Consecutive-pair $\mathsf C_M$ tests in both phases
& Fully guarded nonsuccess implies $D<R$
& $C_p\kappa^{1/2}$ \\
\addlinespace
$p=2$
& Euclidean quadratic estimate sequence; no persistent regularizer in the two-phase trial
& Phase A produces a function gap; guarded OGM-G converts it to a terminal-gradient certificate
& Upper-model guard, every ordered interpolation guard, and the terminal descent guard
& Fully guarded nonsuccess implies $D<R$
& $C\kappa^{1/2}$ \\
\addlinespace
$p>2$
& $p$-uniform convexity of $h=\|\cdot\|_p^p/p$ and the conjugate mirror map
& Phase I reaches a gap of order $\eps D$; the anti-diagonal Phase II controls the terminal $q$-power gradient
& Consecutive-pair $\mathsf C_M$ tests in both phases
& Fully guarded nonsuccess implies $D<R$
& $C_p\kappa^{p/(p+2)}$ \\
\bottomrule
\end{tabularx}
\end{table}

\section[The local solver for 1<p<2]{The local solver for $1<p<2$}
\label{sec:below}

The CFOM/AMD coefficient architecture, its anti-diagonal mirror dual, and the
known-$L$, fixed-horizon $O_p(LR/N^2)$ terminal-gradient specialization below
two are due to Kim--Park--Ozdaglar--Diakonikolas--Ryu
\citep[Secs.~2.3 and 3.1, Thm.~1, and Cor.~3]{KimParkOzdaglarDiakonikolasRyu2024}.
The explicit finite plateau schedule in \eqref{eq:below-weights} is the local
choice used for the guarded trial below.
The squared-$\ell_p$ geometry also appears in the general-norm construction of
Diakonikolas--Guzm\'an \citep[Eq.~(18)]{DiakonikolasGuzman2024}; their older
regularization-to-residual route carries a multiplicative accuracy logarithm.
Here we retain the prior no-log dynamics but expose every finite inequality
that depends on smoothness, test it at the observable trial scale $M$, and
place the two queried phases inside the common $\Success/\Scale/\Radius$
interface.  This guarded realization and its coupling to unknown $L,R$ are
the below-two contribution of the present theorem.
Throughout this section, set $\sigma=p-1$.

\begin{proposition}[The $1<p<2$ trial]\label{prop:belowtrial}
Suppose the pair at $x_0$ is cached, put
$G=\norm{\grad f(x_0)}_q>\eps$, and let $M,D>0$ satisfy $D\ge G/M$.
There is an explicit deterministic finite routine using only $p,\eps,x_0$,
$M,D$, cached oracle data, and new exact value--gradient pair queries.  It has
exactly the following outcomes:
\begin{enumerate}[label=(\roman*)]
\item $\Success$, with a queried point $\widehat x$ satisfying
      $\norm{\grad f(\widehat x)}_q\le\eps$;
\item $\Scale$, accompanied by an actually failed consecutive-pair predicate
      $\mathsf C_M(X,Y)$, and necessarily $M<L$;
\item $\Radius$, possible only after all predicates pass and the terminal
      queried pair has gradient norm larger than $\eps$, and necessarily
      $D<R$.
\end{enumerate}
Its number of new pair queries satisfies
\begin{equation}\label{eq:below-local-count}
 N_{\rm local}
 \le 4\sqrt{\frac{MD}{\sigma\eps}}+2
 \le \left(\frac4{\sqrt\sigma}+2\right)
       \sqrt{\frac{MD}{\eps}}.
\end{equation}
Thus the cost is $O_p(\sqrt{MD/\eps})$, with no dimension-dependent constant.
\end{proposition}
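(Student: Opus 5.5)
The plan is to build the routine as two guarded phases of length $N$ each, with $N=\lceil\sqrt{MD/(\sigma\eps)}\rceil$ plus a constant. This gives $2N\le 4\sqrt{MD/(\sigma\eps)}+2$ new queries. The second form of \eqref{eq:below-local-count} then follows from $MD/\eps\ge G/\eps>1$.

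\emph{Phase~A.} This is accelerated mirror descent with the one-strongly convex prox $\psi(x)=\norm{x-x_0}_p^2/[2\sigma]$, run at trial curvature $M$ with the Kim--Park--Ozdaglar--Diakonikolas--Ryu CFOM weights and the plateau schedule \eqref{eq:below-weights}.

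\emph{Phase~B.} This is the finite anti-diagonal mirror dual of the same coefficient array, started at the last Phase~A point, whose terminal iterate $x_{2N}$ is queried.

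Every consecutive queried pair $(X,Y)$ is evaluated before its use, and $\mathsf C_M(X,Y)$ is tested. If a test fails, the routine returns $\Scale$, and \eqref{eq:cocoercivity-scale} gives $M<L$. If some queried gradient already has norm at most $\eps$, it returns $\Success$ with that point. Otherwise, at the end, it returns $\Radius$. Exhaustiveness of the three outcomes is then immediate from this construction.

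The substance is the implication \eqref{eq:radiusforward}: if $D\ge R$ and every $\mathsf C_M$ test passes, then $\norm{\grad f(x_{2N})}_q\le\eps$. I would reprove the CFOM potential inequalities of \citet[Thm.~1, Cor.~3]{KimParkOzdaglarDiakonikolasRyu2024} line by line. Wherever they invoke $L$-smoothness, I would observe that only the cocoercive inequality \eqref{eq:truecocoercivity} between consecutive iterates, with $L$ replaced by $M$, is used; this is exactly what the passed guards supply.

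For Phase~A, the potential argument gives $f(x_N)-f(x^*)\le C\,M\psi(x^*)/N^2\le CMD^2/(\sigma N^2)$, using $\norm{x^*-x_0}_p=R\le D$. The choice of $N$ makes this $O(\eps D)$. For Phase~B, the anti-diagonal dual inequality converts the gap into $\norm{\grad f(x_{2N})}_q^2/(2\cdot\text{const}) \lesssim M\bigl(f(x_N)-f^*\bigr)/N^2 + \text{(prox terms)}$. Its mirror-dual weighted form, with conjugate $\psi^*$ being $1/\sigma$-smooth, yields $\norm{\grad f(x_{2N})}_q\le C' MD/(\sigma N^2)$. With the constant $4$ chosen in $N$, this is at most $\eps$. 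Contraposition then gives $\Radius\Rightarrow D<R$.

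The main obstacle is the Phase~B step. I must check that the mirror-dual telescoping really uses only consecutive-pair cocoercivity rather than all-pairs interpolation. Otherwise, nonconsecutive $\mathsf C_M$ tests would be needed, which would break the stated outcome~(ii). My first approach is to write the dual potential increment as a combination of $D_f(x_{k},x_{k+1})-\norm{\grad f(x_k)-\grad f(x_{k+1})}_q^2/(2M)$ plus nonnegative Bregman terms of $\psi^*$, using the anti-diagonal structure. I also need the plateau schedule to keep the constant explicit, so that $N_{\rm local}$ has no dimension dependence: only $\sigma$ and norming identities \eqref{eq:normingidentities} enter.
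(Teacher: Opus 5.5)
Your proposal follows the paper's proof: a normalized AMD phase with the plateau weights \eqref{eq:below-weights}, then the finite anti-diagonal mirror dual restarted at the Phase~I endpoint, guarded only by consecutive-pair $\mathsf C_M$ tests. The bounds $F_1(x_n^{\rm P})-\inf F_1\le h(z)/u_n$ and $h^*(\grad F_2(q_n^{\rm D}))\le(F_2(0)-\inf F_2)/u_n$ combine to $\norm{\grad f(Q_n)}_q\le MD/(\sigma u_n)$. The consecutive-pair question you flag is settled as you suggest: the dual potential $\mathcal V_k$ needs only the consecutive guards, the unguarded convexity terms $D_F(q_n^{\rm D},q_k^{\rm D})\ge0$, and the residual nonnegativity transported from the primal completed square by the bijection of Lemma~\ref{lem:below-identity}.

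There are two minor slips. First, since $u_n=n^2/4$, the horizon must be $n=\lceil2\sqrt{MD/(\sigma\eps)}\rceil$, not $\lceil\sqrt{MD/(\sigma\eps)}\rceil$ plus a constant; this choice still gives $2n\le4\sqrt{MD/(\sigma\eps)}+2$. Second, $\psi^*$ is $1$-smooth, not $1/\sigma$-smooth; the factor $1/\sigma$ comes instead from $h^*(s)=\tfrac\sigma2\norm{s}_q^2$ and $h(z)\le1/(2\sigma)$.
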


\subsection[Squared ell-p geometry and the observable guard]
{Squared $\ell_p$ geometry and the observable guard}

The squared $\ell_p$ normalization below is the one used in the
general-norm small-gradient construction of Diakonikolas--Guzm\'an
\citep[Eq.~(18) and Sec.~3]{DiakonikolasGuzman2024}.  We repeat the curvature calculation because
its dimension-free constant is load-bearing for the finite trial.

Define the duality map $\mathcal J_p(0)=0$ and, for $x\ne0$,
\[
 [\mathcal J_p(x)]_i
 =\norm{x}_p^{2-p}\operatorname{sgn}(x_i)|x_i|^{p-1}.
\]
Then
\begin{equation}\label{eq:below-J-identities}
 \norm{\mathcal J_p(x)}_q=\norm{x}_p,
 \qquad \ip{\mathcal J_p(x)}x=\norm{x}_p^2,
 \qquad \mathcal J_p^{-1}=\mathcal J_q.
\end{equation}

\begin{lemma}[Dimension-free quadratic geometry]\label{lem:belowgeometry}
The function
\begin{equation}\label{eq:below-h-pair}
 h(x)=\frac1{2\sigma}\norm{x}_p^2
\end{equation}
is one-strongly convex with respect to $\norm{\cdot}_p$.  Its Fenchel
conjugate and mirror map are
\begin{equation}\label{eq:below-hstar}
 h^*(s)=\frac\sigma2\norm{s}_q^2,
 \qquad \grad h^*(s)=\sigma\mathcal J_q(s).
\end{equation}
Consequently, with
$D_\phi(a,b)=\phi(a)-\phi(b)-\ip{\grad\phi(b)}{a-b}$,
\begin{equation}\label{eq:below-conjugate-orientation}
 D_{h^*}(s,t)
 =D_h\bigl(\grad h^*(t),\grad h^*(s)\bigr)
 \ge\frac12
 \norm{\grad h^*(t)-\grad h^*(s)}_p^2.
\end{equation}
\end{lemma}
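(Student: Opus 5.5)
The plan has three parts: prove one-strong convexity of $h$, compute $h^*$ and $\nabla h^*$ by conjugacy, and deduce the Bregman identity and its lower bound from standard conjugate-duality facts.

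\emph{Strong convexity.} For $1<p\le2$, the function $\frac{1}{2(p-1)}\norm{x}_p^2$ is $1$-strongly convex with respect to $\norm{\cdot}_p$; this is the classical $2$-uniform convexity of $\ell_p$ with constant $p-1$ (Ball--Carlen--Lieb). I would give the short dimension-free argument directly. Since $h$ is convex and differentiable, it suffices to show
\[
 h(x+y)\ge h(x)+\ip{\grad h(x)}{y}+\tfrac12\norm{y}_p^2
\]
along every segment. By density and the homogeneity of the norm, this reduces to checking that the second derivative of $t\mapsto\norm{x+ty}_p^2$ is at least $2(p-1)\norm{y}_p^2$ wherever all coordinates are nonzero. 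A direct calculation gives
\[
 \frac{d^2}{dt^2}\norm{x+ty}_p^2
 =2(2-p)\norm{x}_p^{2-2p}\Bigl(\sum_i|x_i|^{p-1}\operatorname{sgn}(x_i)y_i\Bigr)^2
 +2(p-1)\norm{x}_p^{2-p}\sum_i|x_i|^{p-2}y_i^2 .
\]
The first term is nonnegative because $p\le2$. For the second term, H\"older's inequality with exponents $2/p$ and $2/(2-p)$ gives
\[
 \norm{y}_p^2\le\Bigl(\sum_i|x_i|^{p-2}y_i^2\Bigr)\norm{x}_p^{2-p}.
\]
Together these yield the required lower bound $2(p-1)\norm{y}_p^2$, and dividing by $2\sigma=2(p-1)$ gives constant $1$. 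This second-derivative bound is the main obstacle: one must control the sign of the cross term and handle the coordinates where $x_i=0$, which a limiting argument covers.

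\emph{Conjugate.} Conjugacy turns the scaling $c\norm{\cdot}_p^2/2$ into $\norm{\cdot}_q^2/(2c)$, using the dual-norm pairing of $\ell_p$ and $\ell_q$. With $c=1/\sigma$ this gives $h^*(s)=\frac\sigma2\norm{s}_q^2$. Differentiating, $\grad\bigl(\tfrac12\norm{s}_q^2\bigr)=\mathcal J_q(s)$ by the same formula that defines $\mathcal J_p$, so $\grad h^*(s)=\sigma\mathcal J_q(s)$. One can check this against \eqref{eq:below-J-identities}: $\grad h(x)=\mathcal J_p(x)/\sigma$, and its inverse is $\sigma\mathcal J_q$ by homogeneity of degree one together with $\mathcal J_p^{-1}=\mathcal J_q$.

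\emph{Bregman identity.} Put $a=\grad h^*(s)$ and $b=\grad h^*(t)$, so that $s=\grad h(a)$ and $t=\grad h(b)$. The Fenchel equalities $h^*(s)=\ip{s}{a}-h(a)$ and $h^*(t)=\ip{t}{b}-h(b)$ give
\[
 D_{h^*}(s,t)=h^*(s)-h^*(t)-\ip{b}{s-t}.
\]
Substituting the Fenchel equalities and rearranging yields $h(b)-h(a)-\ip{\grad h(a)}{b-a}=D_h(b,a)$, which is the claimed identity. Strong convexity from the first step then gives $D_h(b,a)\ge\frac12\norm{b-a}_p^2$, which is exactly \eqref{eq:below-conjugate-orientation}.
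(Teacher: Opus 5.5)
Your proposal is correct and follows the paper's proof essentially step for step. Both use the same second-derivative formula for $\tfrac12\|x\|_p^2$, the same H\"older step with exponents $2/p$ and $2/(2-p)$, conjugation by scaling, and the Fenchel equalities to reverse the Bregman arguments. The one difference is in handling coordinates where $x_i+ty_i=0$. The paper integrates the a.e.\ Hessian bound, using integrability of the singular factors $|t-t_0|^{p-2}$. Your limiting argument is most cleanly made precise as follows: $t\mapsto h(x+ty)$ is $C^1$ and satisfies the second-derivative bound away from finitely many coordinate crossings, so $t\mapsto \frac{d}{dt}h(x+ty)-t\|y\|_p^2$ is nondecreasing.
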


\begin{proof}
Put $\psi(x)=\norm{x}_p^2/2$.  At a point with nonzero coordinates,
direct differentiation gives, for every direction $a$,
\begin{align*}
 D^2\psi(x)[a,a]
 &=(2-p)\norm{x}_p^{2-2p}
   \left(\sum_i|x_i|^{p-2}x_i a_i\right)^2\\
 &\quad+\sigma\norm{x}_p^{2-p}
   \sum_i|x_i|^{p-2}a_i^2.
\end{align*}
The first term is nonnegative.  Writing
\[
 |a_i|^p=(|x_i|^{p-2}a_i^2)^{p/2}|x_i|^{p(2-p)/2}
\]
and applying H\"older with exponents $2/p$ and $2/(2-p)$ yields
\[
 \norm{x}_p^{2-p}\sum_i|x_i|^{p-2}a_i^2\ge\norm{a}_p^2.
\]
Thus $D^2\psi(x)[a,a]\ge\sigma\norm{a}_p^2$ wherever the Hessian is
classical.  Along a line segment, the only possible singular factors at a
coordinate crossing have the form $|t-t_0|^{p-2}$; they are integrable because
$p>1$.  Integrating the almost-everywhere Hessian bound twice gives
\[
 \psi(y)\ge\psi(x)+\ip{\mathcal J_p(x)}{y-x}
 +\frac\sigma2\norm{y-x}_p^2.
\]
Division by $\sigma$ proves the strong convexity of $h$.

The conjugate formula follows from the scalar optimization of
$\ip{s}{x}-\norm{x}_p^2/(2\sigma)$ and \eqref{eq:below-J-identities}.  The standard
Fenchel equalities $s=\grad h(x)$ and $x=\grad h^*(s)$ give the reversed
Bregman identity in \eqref{eq:below-conjugate-orientation}; its lower bound is the
one-strong convexity just proved.
\end{proof}

The observable predicate $\mathsf C_M$ was defined in
\eqref{eq:cocoercivityguard}.  The Banach smooth-convex remainder
\eqref{eq:truecocoercivity} was proved there by minimizing the quadratic upper
model of $f(\cdot)-\langle\nabla f(Y),\cdot\rangle$; in particular,
\begin{equation}\label{eq:below-scale-guard}
 \neg\mathsf C_M(X,Y)\quad\Longrightarrow\quad M<L.
\end{equation}
This is the only implication used when a below-two trial returns $\Scale$.

\subsection{The finite primal coefficient method}

This subsection works with an arbitrary differentiable convex normalized
function $F:\R^d\to\R$.  Its Bregman remainder is denoted by $D_F$.
The finite coefficient representation and the accelerated mirror-descent
recurrence adapt the CFOM/AMD construction of
Kim--Park--Ozdaglar--Diakonikolas--Ryu
\citep[Secs.~2.3 and 3.1 and Eqs.~(1)--(3)]{KimParkOzdaglarDiakonikolasRyu2024}.
They are expanded here so that the finite inequalities requiring smoothness
are visible and guardable.
Fix an integer $n\ge1$ and use the plateaued weights
\begin{equation}\label{eq:below-weights}
 u_{-1}=0,\qquad
 u_k=\frac{(k+1)^2}{4}\quad(0\le k<n),
 \qquad u_n=u_{n-1}=\frac{n^2}{4},
 \qquad d_k=u_k-u_{k-1}.
\end{equation}
Thus $d_n=0$, while for $0\le k<n$,
\begin{equation}\label{eq:below-weight-PSD}
 d_k=\frac{2k+1}{4},\qquad
 u_k-d_k^2=\frac{4k+3}{16}>0.
\end{equation}

Starting from
$s_0=v_0=x_0^{\rm P}=0$, let $g_k=\grad F(x_k^{\rm P})$ and set, for
$k=0,\ldots,n-1$,
\begin{align}
 s_{k+1}&=s_k-d_kg_k,\label{eq:below-primal-s}\\
 v_{k+1}&=\grad h^*(s_{k+1}),\label{eq:below-primal-v}\\
 x_{k+1}^{\rm P}
 &=\frac{u_k}{u_{k+1}}x_k^{\rm P}
 +\frac{d_{k+1}}{u_{k+1}}v_{k+1}
 +\frac{d_k}{u_{k+1}}(v_{k+1}-v_k).
 \label{eq:below-primal-x}
\end{align}
The point $x_{k+1}^{\rm P}$ is queried before the next update.

\begin{lemma}[Finite guarded primal estimate]\label{lem:below-primal}
Suppose $F$ has a minimizer $z$, every point
$x_0^{\rm P},\ldots,x_n^{\rm P}$ has been queried, and
\begin{equation}\label{eq:below-normalized-guard-primal}
 D_F(x_k^{\rm P},x_{k+1}^{\rm P})
 \ge\frac12\norm{g_k-g_{k+1}}_q^2
 \qquad(0\le k<n).
\end{equation}
Then
\begin{equation}\label{eq:below-primal-gap}
 F(x_n^{\rm P})-\inf F\le\frac{h(z)}{u_n}.
\end{equation}
\end{lemma}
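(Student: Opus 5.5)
The natural approach is a potential (Lyapunov) argument of accelerated mirror-descent type. The hypothesis \eqref{eq:below-normalized-guard-primal} takes over the role that smoothness plays in the usual descent step, and \Cref{lem:belowgeometry} supplies the strong convexity of $h$ that absorbs the cross terms. Since $s_0=0$ and $v_{k}=\grad h^*(s_k)$ with $s_k=-\sum_{i<k}d_ig_i$, the sequence $v_k$ is exactly dual averaging with regularizer $h$ centred at the origin. So for every $k\le n$ the standard dual-averaging bound holds:
\[
 \sum_{i<k} d_i\ip{g_i}{v_{i+1}-z}\le h(z)-\sum_{i<k}D_h(v_{i+1},v_i)\quad\text{(up to the orientation fixed by \eqref{eq:below-conjugate-orientation})}.
\]
I would verify this by telescoping $h(z)-\ip{s_k}{z}+h^*(s_k)$, using the Fenchel equality at each $v_i$.

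The candidate potential is
\[
 \Phi_k:=u_k\bigl(F(x_k^{\rm P})-F(z)\bigr)+\Bigl[h^*(s_k)-h^*(0)+\ip{s_k}{z}\Bigr]\le\text{const},
\]
and the target is $\Phi_{k+1}\le\Phi_k$ for $0\le k<n$. To get the one-step inequality, write
\[
 u_{k+1}F(x_{k+1}^{\rm P})=u_kF(x_{k+1}^{\rm P})+d_{k+1}F(x_{k+1}^{\rm P}).
\]
Then use the two exact identities
\[
 F(x_k^{\rm P})=F(x_{k+1}^{\rm P})+\ip{g_{k+1}}{x_k^{\rm P}-x_{k+1}^{\rm P}}+D_F(x_k^{\rm P},x_{k+1}^{\rm P}),
 \qquad
 F(z)\ge F(x_{k+1}^{\rm P})+\ip{g_{k+1}}{z-x_{k+1}^{\rm P}}.
\]
The first is weighted by $u_k$ and the second by $d_{k+1}$. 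Substituting the coupling \eqref{eq:below-primal-x} in the form
\[
 u_{k+1}x_{k+1}^{\rm P}=u_kx_k^{\rm P}+d_{k+1}v_{k+1}+d_k(v_{k+1}-v_k)
\]
makes the $x$-linear terms collapse. What remains is $\ip{g_{k+1}}{\cdot}$ against $v_{k+1}-z$ and $v_{k+1}-v_k$. In the dual-averaging increment these pair with $g_k$ instead of $g_{k+1}$, and the mismatch is the single cross term
\[
 d_k\ip{g_k-g_{k+1}}{v_{k+1}-v_k}.
\]

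The cross term is handled by H\"older and Young:
\[
 d_k\bigl|\ip{g_k-g_{k+1}}{v_{k+1}-v_k}\bigr|\le\frac{d_k^2}{2}\norm{g_k-g_{k+1}}_q^2+\frac12\norm{v_{k+1}-v_k}_p^2 .
\]
The second piece is cancelled by the Bregman term $D_h(v_k,v_{k+1})\ge\frac12\norm{v_{k+1}-v_k}_p^2$ from \eqref{eq:below-conjugate-orientation}. The first piece is cancelled by $u_kD_F(x_k^{\rm P},x_{k+1}^{\rm P})\ge\frac{u_k}{2}\norm{g_k-g_{k+1}}_q^2$, which is exactly the guard \eqref{eq:below-normalized-guard-primal}, because $u_k\ge d_k^2$ by \eqref{eq:below-weight-PSD}.

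Telescoping from $\Phi_0$ (where $u_0$ times the gap is absorbed into the first step, since $x_0^{\rm P}=0=v_0$ and $u_{-1}=0$) gives
\[
 u_n\bigl(F(x_n^{\rm P})-F(z)\bigr)\le h(z),
\]
after the conjugate bracket is bounded below using $\sup_s\{\ip{s}{z}-h^*(s)\}=h(z)$. This is \eqref{eq:below-primal-gap}. In the last step $k=n-1$ the weight $d_n=0$ simply drops the $d_{k+1}$ term, so the plateau causes no problem.

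The main obstacle is the exact bookkeeping in the one-step identity. One has to confirm that, after substituting the three-term coupling, every linear term in $x^{\rm P}$ cancels and the leftover is precisely the single cross term above with coefficient $d_k$, with the Bregman orientation matching \eqref{eq:below-conjugate-orientation}. I would first carry this out with the index shift written explicitly, $g_{k+1}=g_k-(g_k-g_{k+1})$, before committing to the exact form of the potential.
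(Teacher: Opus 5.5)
Your ingredients are the paper's: the guard replaces smoothness, one-strong convexity of $h$ controls $\|v_{k+1}-v_k\|_p^2$, a completed square uses $d_k^2\le u_k$, and Fenchel--Young closes the argument. The gap is in your potential. For the $\Phi_k$ you wrote, the claimed inequality $\Phi_{k+1}\le\Phi_k$ is false, and it fails exactly at your claim that the only mismatch is the single cross term.

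Do the bookkeeping exactly, taking the sign $-\langle s_k,z\rangle$; that is the sign your closing Fenchel--Young step needs, and with $+\langle s_k,z\rangle$ the $z$-terms accumulate instead of telescoping. Writing $\beta_k:=d_k\langle g_k,v_k-z\rangle$, one gets the identity
\[
\begin{aligned}
\Phi_{k+1}-\Phi_k={}&\beta_{k+1}-\beta_k-d_k\langle g_k-g_{k+1},v_{k+1}-v_k\rangle\\
&-u_kD_F(x_k^{\rm P},x_{k+1}^{\rm P})-D_{h^*}(s_k,s_{k+1})-d_{k+1}D_F(z,x_{k+1}^{\rm P}).
\end{aligned}
\]
The convexity inequality at $x_{k+1}^{\rm P}$ carries weight $d_{k+1}$ and gradient $g_{k+1}$. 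The dual-averaging increment carries $d_k$ and $g_k$. The leftover $\beta_{k+1}-\beta_k$ has no sign and can be of order $\|z\|_p$.

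Here is a concrete failure. Take $d=1$, $n\ge2$, and $F$ the unit Huber function centred at $z=c\ge2$. Then $g_0=g_1=-1$, every guard holds, and $x_1^{\rm P}=v_1=\sigma/4$. A direct computation gives $\Phi_1-\Phi_0=c/2-3/8-7\sigma/32>0$, and the literal $+\langle s_k,z\rangle$ version is worse.

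The repair is to move the bracket into the potential: set $\widetilde\Phi_k:=\Phi_k-\beta_k$. Then $\widetilde\Phi_{k+1}-\widetilde\Phi_k$ is exactly the last four terms above. Your H\"older/Young step, together with the guard, $D_{h^*}(s_k,s_{k+1})=D_h(v_{k+1},v_k)\ge\frac12\|v_{k+1}-v_k\|_p^2$, and $d_k^2\le u_k$, makes these four terms nonpositive.

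The endpoints now do real work.
\begin{itemize}
\item At $k=0$, the facts $x_0^{\rm P}=v_0=0$ and $u_0=d_0$ give $\widetilde\Phi_0=-u_0D_F(z,x_0^{\rm P})\le0$. This is the precise content of your phrase ``absorbed into the first step.'' Your own $\Phi_0=u_0(F(x_0^{\rm P})-F(z))$ is positive and would cost you the constant $h(z)$.
\item At $k=n$, you get $\beta_n=0$ precisely because $d_n=0$. So the plateau is essential, not harmless.
\end{itemize}
Then $h^*(s_n)-\langle s_n,z\rangle\ge-h(z)$ gives $u_n(F(x_n^{\rm P})-\inf F)\le h(z)$.

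With this correction your argument is a stepwise version of the paper's proof. The paper instead defines $\mathcal U_k$ as $h(z)$ minus the same nonnegative blocks. It identifies $\mathcal U_n$ globally by Abel summation, with the endpoints closed by the same three facts $u_0=d_0$, $x_0^{\rm P}=v_0=0$, $d_n=0$, and then applies the same completed square.
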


\begin{proof}
Define
\begin{align*}
 \mathcal U_0&=h(z)-u_0D_F(z,x_0^{\rm P}),\\
 \mathcal U_{k+1}
 &=\mathcal U_k-d_{k+1}D_F(z,x_{k+1}^{\rm P})\\
 &\quad-u_k\left[D_F(x_k^{\rm P},x_{k+1}^{\rm P})
 -\frac12\norm{g_k-g_{k+1}}_q^2\right]\\
 &\quad-\left[D_{h^*}(s_k,s_{k+1})
 -\frac12\norm{v_{k+1}-v_k}_p^2\right].
\end{align*}
Every subtracted term is nonnegative: the first by convexity, the second by
\eqref{eq:below-normalized-guard-primal}, and the third by
\eqref{eq:below-conjugate-orientation}.  Hence
$\mathcal U_n\le\mathcal U_0\le h(z)$.

We expose the finite telescope.  Put $A_k=g_k$, $B_k=v_k$, and
$A_{n+1}=0$.  Equation \eqref{eq:below-primal-x} is equivalent to
\begin{equation}\label{eq:below-weighted-x}
 u_{k+1}x_{k+1}^{\rm P}-u_kx_k^{\rm P}
 =d_{k+1}B_{k+1}+d_k(B_{k+1}-B_k).
\end{equation}
Abel summation gives
\begin{align}
 &\sum_{k=0}^{n-1}\ip{s_k-s_{k+1}}{B_{k+1}}
 -\sum_{k=0}^{n}u_k\ip{A_k-A_{k+1}}{x_k^{\rm P}}\notag\\
 &\hspace{2cm}=
 \sum_{k=0}^{n-1}d_k\ip{A_k-A_{k+1}}{B_{k+1}-B_k}.
 \label{eq:below-Abel}
\end{align}
The endpoint terms close because $u_0=d_0$, $x_0^{\rm P}=B_0=0$, and
$d_n=0$.  Expanding the Bregman terms and using \eqref{eq:below-Abel} yields
\begin{equation}\label{eq:below-U-terminal}
 \mathcal U_n
 =u_n\bigl(F(x_n^{\rm P})-F(z)\bigr)
 +h(z)+h^*(s_n)-\ip{s_n}z+\mathcal R_{\rm P},
\end{equation}
where
\begin{align}
 \mathcal R_{\rm P}
 &=\sum_{k=0}^{n-1}\biggl[
 \frac{u_k}{2}\norm{A_k-A_{k+1}}_q^2
 +\frac12\norm{B_k-B_{k+1}}_p^2\notag\\
 &\hspace{3.5cm}
 +d_k\ip{A_k-A_{k+1}}{B_{k+1}-B_k}\biggr].
 \label{eq:below-RP}
\end{align}
For $X=\norm{A_k-A_{k+1}}_q$ and
$Y=\norm{B_{k+1}-B_k}_p$, H\"older's inequality and
\eqref{eq:below-weight-PSD} give the pointwise bound
\begin{align}
 \frac{u_k}{2}X^2+\frac12Y^2-d_kXY
 &=\frac12\left(\sqrt{u_k}X-\frac{d_k}{\sqrt{u_k}}Y\right)^2
 +\frac12\left(1-\frac{d_k^2}{u_k}\right)Y^2\ge0.
 \label{eq:below-mixed-PSD}
\end{align}
Thus $\mathcal R_{\rm P}\ge0$.  The Fenchel term
$h(z)+h^*(s_n)-\ip{s_n}z$ is also nonnegative.  Combining
\eqref{eq:below-U-terminal} with $\mathcal U_n\le h(z)$ proves
\eqref{eq:below-primal-gap}.
\end{proof}

\subsection{The finite anti-diagonal coefficient identity}

For the primal construction define
\begin{align}
 \alpha_{k+1,i}&=d_k\mathbf 1_{\{i=k\}},\label{eq:below-alpha}\\
 c_{0,0}&=1,\notag\\
 c_{k+1,i}
 &=\frac{u_k}{u_{k+1}}c_{k,i}
 +\frac{d_{k+1}+d_k}{u_{k+1}}\mathbf 1_{\{i=k+1\}}
 -\frac{d_k}{u_{k+1}}\mathbf 1_{\{i=k\}},\label{eq:below-c-array}\\
 b_{k+1,i}&=c_{k,i}-c_{k+1,i},
 \qquad b_{0,0}=-1,\label{eq:below-b-array}
\end{align}
where an out-of-range $c_{k,i}$ is zero.  Induction gives
\begin{equation}\label{eq:below-row-sums}
 \sum_{i=0}^kc_{k,i}=1,
 \qquad \sum_{i=0}^{k+1}b_{k+1,i}=0.
\end{equation}
Moreover,
\[
 x_{k+1}^{\rm P}=x_k^{\rm P}-\sum_{i=0}^{k+1}b_{k+1,i}v_i.
\]

We record the exact finite identity needed by the terminal-gradient phase.
It is stated for a general even increment so that the algebraic and geometric
parts remain visibly separate.  The coefficient bijection and its bilinear
telescope are the pointwise proof underlying the published mirror-duality
theorem \citep[Thm.~1 and Eqs.~(8)--(10)]{KimParkOzdaglarDiakonikolasRyu2024}.
The proof below specializes its quadratic increment and retains the full
calculation for guard verification.

\begin{lemma}[Pointwise anti-diagonal residual identity]\label{lem:below-identity}
Let $A_0,\ldots,A_n$ and $B_0,\ldots,B_n$ be free vectors and set
$A_{n+1}=0$.  Define
\[
 X_0=B_0,\qquad
 X_{k+1}=X_k-\sum_{i=0}^{k+1}b_{k+1,i}B_i.
\]
For an even function $\Omega$, put
\begin{align}
 \mathcal R_{\rm P}^{\Omega}(A,B)
 &=\sum_{k=0}^{n-1}\frac{u_k}{2}\norm{A_k-A_{k+1}}_q^2
 +\sum_{k=0}^{n-1}\Omega(B_k-B_{k+1})\notag\\
 &\quad+\sum_{k=0}^{n-1}
 \ip{\sum_{i=0}^k\alpha_{k+1,i}A_i}{B_{k+1}}
 -\sum_{k=0}^{n}u_k\ip{A_k-A_{k+1}}{X_k}.
 \label{eq:below-free-RP}
\end{align}
Let $w_i=1/u_{n-i}$ and, for free vectors $C,D$, define
\begin{align*}
 P_k(C)&=w_{k+1}C_{k+1}
 -\sum_{j=0}^k(w_{j+1}-w_j)C_j,\\
 Q_k(D)&=\sum_{i=0}^k\alpha_{n-i,n-1-k}D_i,
\end{align*}
and
\begin{align}
 \mathcal R_{\rm D}^{\Omega}(C,D)
 &=\sum_{k=0}^{n-1}\frac{w_{k+1}}2\norm{C_k-C_{k+1}}_q^2
 +\sum_{k=0}^{n-1}\Omega(D_k-D_{k+1})\notag\\
 &\quad+\sum_{k=0}^{n}
 \ip{\sum_{i=0}^kb_{n-i,n-k}C_i}{D_k}
 +\sum_{k=0}^{n-1}\ip{P_k(C)}{Q_k(D)}.
 \label{eq:below-free-RD}
\end{align}
The linear change of variables
\begin{align}
 C_0&=u_nA_n,\label{eq:below-map-C0}\\
 C_{n-i}-C_{n-i-1}&=u_i(A_i-A_{i+1})
 \quad(0\le i<n),\label{eq:below-map-Cdiff}\\
 D_i&=B_{n-i}\quad(0\le i\le n)\label{eq:below-map-D}
\end{align}
is a bijection and satisfies
\begin{equation}\label{eq:below-pointwise-identity}
 \mathcal R_{\rm P}^{\Omega}(A,B)
 =\mathcal R_{\rm D}^{\Omega}(C,D).
\end{equation}
\end{lemma}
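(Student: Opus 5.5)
Both residuals are polynomials in the free vectors, and the map \eqref{eq:below-map-C0}--\eqref{eq:below-map-D} is linear, so I would prove \eqref{eq:below-pointwise-identity} by matching the residuals term by term. I would not expand everything blindly. Bijectivity is immediate. Since $u_i>0$ for $0\le i<n$ and $u_n>0$, the differences $C_{n-i}-C_{n-i-1}$ together with $C_0$ determine $C$ by back-substitution. The inverse is $A_n=C_0/u_n$ and $A_i=A_{i+1}+(C_{n-i}-C_{n-i-1})/u_i$. The map $D_i=B_{n-i}$ is a reversal.

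\emph{Step 1 (quadratic $A/C$ terms).} With $k=n-1-i$, \eqref{eq:below-map-Cdiff} gives
\[
\tfrac{w_{k+1}}2\|C_k-C_{k+1}\|_q^2=\tfrac{1}{2u_{n-k-1}}u_i^2\|A_i-A_{i+1}\|_q^2=\tfrac{u_i}{2}\|A_i-A_{i+1}\|_q^2 .
\]
This uses $w_{k+1}=1/u_{n-k-1}=1/u_i$, so the first sums match.

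\emph{Step 2 ($\Omega$ terms).} Because $\Omega$ is even, $\Omega(D_k-D_{k+1})=\Omega(B_{n-k}-B_{n-k-1})=\Omega(B_{n-k-1}-B_{n-k})$. Reindexing then matches the second sums. This is the only place evenness is used.

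\emph{Step 3 (bilinear terms, the main obstacle).} What remains on each side is a bilinear form in $(A,B)$, or equivalently in $(C,D)$. Writing $\Delta_i:=A_i-A_{i+1}$, I would express both sides in the coordinates $(\Delta_0,\dots,\Delta_{n-1},A_n)$ and $B_0,\dots,B_n$. On the primal side:
\begin{itemize}[leftmargin=2em]
\item $X_k=B_0-\sum_{j\le k}\sum_i b_{j,i}B_i=\sum_i c_{k,i}B_i$, by the telescoping definition $b_{j,i}=c_{j-1,i}-c_{j,i}$ and $c_{0,0}=1$.
\item Hence $-\sum_k u_k\langle\Delta_k,X_k\rangle=-\sum_{k,i}u_kc_{k,i}\langle\Delta_k,B_i\rangle$. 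The $k=n$ term contributes $-u_n\langle A_n,\sum_i c_{n,i}B_i\rangle$.
\item Since $\alpha_{k+1,i}=d_k\mathbf 1_{\{i=k\}}$, the $\alpha$ term is $\sum_k d_k\langle A_k,B_{k+1}\rangle$. Writing $A_k=A_n+\sum_{j\ge k}\Delta_j$ puts it in the same coordinates.
\end{itemize}
On the dual side I would substitute $C_m=u_nA_n+\sum_{i\ge n-m}u_i\Delta_i$, $D_k=B_{n-k}$, $w_i=1/u_{n-i}$, and $\alpha_{n-i,n-1-k}=d_{n-1-i}\mathbf 1_{\{k=i\}}$. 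With these, $Q_k(D)=d_{n-1-k}B_{n-k}$. I would expand $P_k(C)$ by Abel summation: $P_k(C)=\sum_{j\le k}w_j(C_{j+1}-C_j)+w_0C_0$, and each difference $C_{j+1}-C_j$ is a single $u\Delta$. This turns $P_k(C)$ into a short sum of $\Delta$'s weighted by ratios $u_i/u_{n-j}$.

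The claim then reduces to equating, for each pair $(\Delta_k\text{ or }A_n,\,B_i)$, a scalar coefficient built from $u,d,c$. These coefficient identities are the finite anti-diagonal relations $b$-array transpose versus $c$-array. I would prove them by induction on $k$ using the recursion \eqref{eq:below-c-array} and the row sums \eqref{eq:below-row-sums}. For example, the $A_n$ coefficients match because $\sum_i c_{n,i}=1$ together with $\sum_i b_{\cdot,i}=0$. The detailed pattern, that the dual coefficient of $(\Delta_k,B_i)$ is $u_k$ times a partial column sum of $b$, equals $u_kc_{k,i}$ minus the $\alpha$ contribution, is exactly the CFOM mirror-duality bookkeeping of \citet[Thm.~1]{KimParkOzdaglarDiakonikolasRyu2024}. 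I would follow their Eqs.~(8)--(10) to organize the induction.

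I expect the bookkeeping in Step 3 to be the hard part. The plateau $u_n=u_{n-1}$ and $d_n=0$ are needed at the boundary indices $k=n-1,n$, so those boundary cases should be checked separately. As a sanity check, I would verify the identity by hand for $n=1$ and $n=2$ before running the general induction.
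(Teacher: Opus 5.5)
Your Steps~1 and~2 (bijectivity, the quadratic block, and the $\Omega$ block via evenness) match the paper and are correct. The gap is in Step~3, which carries all of the lemma's content. You leave it to an unspecified induction and a pointer to Kim et al., and the one explicit computation you give is off by one. Abel summation of $P_k(C)=w_{k+1}C_{k+1}-\sum_{j=0}^k(w_{j+1}-w_j)C_j$ gives
\[
 P_k(C)=w_0C_0+\sum_{j=1}^{k+1}w_j(C_j-C_{j-1}),
\]
not $w_0C_0+\sum_{j\le k}w_j(C_{j+1}-C_j)$. The two agree at $k=0$ only because the plateau gives $w_0=w_1$. We have $C_j-C_{j-1}=u_{n-j}(A_{n-j}-A_{n-j+1})$ and $w_j=1/u_{n-j}$, so every weight is exactly one. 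Together with $w_0C_0=A_n$, the sum telescopes to $P_k(C)=A_{n-k-1}$, and no ratios $u_i/u_{n-j}$ appear.

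This simplification is the missing idea: the two bilinear blocks match separately and crosswise, so there is nothing to induct on.
\begin{itemize}
\item You correctly computed $Q_k(D)=d_{n-1-k}B_{n-k}$. With it, $\sum_k\langle P_k(C),Q_k(D)\rangle=\sum_{\ell=0}^{n-1}d_\ell\langle A_\ell,B_{\ell+1}\rangle$, which is exactly the primal $\alpha$-block.
\item For the other pair, write $\Delta_m=A_m-A_{m+1}$, so $\Delta_n=A_n$ and $C_{n-\ell}=\sum_{m=\ell}^n u_m\Delta_m$. The reindexing $(\ell,j)=(n-i,n-k)$ turns the dual $b$-block into $\sum_{\ell=0}^n\sum_{j\le\ell}b_{\ell,j}\langle C_{n-\ell},B_j\rangle$.
\item Its $(\Delta_m,B_j)$ coefficient is $u_m\sum_{\ell\le m}b_{\ell,j}=-u_mc_{m,j}$, by column telescoping: $b_{\ell,j}=c_{\ell-1,j}-c_{\ell,j}$ for $\ell\ge1$, $b_{0,0}=-1$, $c_{0,0}=1$.
\item Since $X_m=\sum_j c_{m,j}B_j$, as you noted, this is exactly the coefficient in $-\sum_m u_m\langle\Delta_m,X_m\rangle$.
\end{itemize}

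So the $b$-block coefficient is not ``$u_kc_{k,i}$ minus the $\alpha$ contribution''; there is no mixing between blocks. Neither the recursion \eqref{eq:below-c-array}, nor the row sums \eqref{eq:below-row-sums}, nor the plateau $u_n=u_{n-1}$, $d_n=0$ is used. Likewise, the $A_n$ coefficients match by the per-column telescoping above, not by row sums. Matching coefficients of the whole bilinear form would succeed if carried out correctly. As written, however, Step~3 neither performs that computation nor correctly identifies the identities it would have to verify.
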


\begin{proof}
The inverse first recovers $A_n=C_0/u_n$, then recovers the preceding $A_i$
from \eqref{eq:below-map-Cdiff}, and finally reverses $D_i=B_{n-i}$.  If
$i=n-k-1$, then
\[
 \frac{w_{k+1}}2\norm{C_k-C_{k+1}}_q^2
 =\frac{u_i}{2}\norm{A_i-A_{i+1}}_q^2.
\]
Evenness of $\Omega$ gives
\[
 \sum_{k=0}^{n-1}\Omega(D_k-D_{k+1})
 =\sum_{i=0}^{n-1}\Omega(B_i-B_{i+1}).
\]

For the first bilinear block, set $C_{-1}=0$.  Summation by parts in the
recurrence for $X_k$ gives
\begin{equation}\label{eq:below-b-bilinear}
 -\sum_{k=0}^{n}u_k\ip{A_k-A_{k+1}}{X_k}
 =\sum_{k=0}^{n}\sum_{i=0}^kb_{k,i}\ip{C_{n-k}}{B_i}.
\end{equation}
The change of indices $(\ell,j)=(n-i,n-k)$ turns the third term of
\eqref{eq:below-free-RD} into the right-hand side of \eqref{eq:below-b-bilinear}.
For the remaining block, telescoping gives
\begin{equation}\label{eq:below-P-telescope}
 P_k(C)
 =\frac{C_{k+1}-C_k}{u_{n-k-1}}
  +\frac{C_k-C_{k-1}}{u_{n-k}}+\cdots+\frac{C_0}{u_n}
 =A_{n-k-1}.
\end{equation}
The change $(\ell,j)=(n-i-1,n-k-1)$ converts the last term of
\eqref{eq:below-free-RD} into
\[
 \sum_{\ell=0}^{n-1}
 \ip{\sum_{j=0}^{\ell}\alpha_{\ell+1,j}A_j}{B_{\ell+1}},
\]
which is the remaining term of \eqref{eq:below-free-RP}.  The four transformed
blocks prove \eqref{eq:below-pointwise-identity}.
\end{proof}

Take $\Omega(y)=\norm y_p^2/2$.  Equations \eqref{eq:below-free-RP} and
\eqref{eq:below-RP} agree after the primal recurrences are substituted.  Explicitly,
the two bilinear blocks reduce by the same free-vector Abel identity to
\begin{align*}
 &\sum_{k=0}^{n-1}d_k\ip{A_k}{B_{k+1}}
 -\sum_{k=0}^{n}u_k\ip{A_k-A_{k+1}}{X_k}\\
 &\hspace{2cm}=
 \sum_{k=0}^{n-1}d_k
 \ip{A_k-A_{k+1}}{B_{k+1}-B_k}.
\end{align*}
Here $X_0=B_0$, $u_0=d_0$, and $d_n=0$ close the endpoints; no assumption
$B_0=0$ is used in this free identity.  More importantly, the
completed-square calculation \eqref{eq:below-mixed-PSD} applies to
every free $(A,B)$, not merely to an oracle trajectory.  Since the map in
Lemma~\ref{lem:below-identity} is onto, we obtain
\begin{equation}\label{eq:below-RD-nonnegative}
 \mathcal R_{\rm D}^{\Omega}(C,D)\ge0
 \qquad\text{for every free }(C,D).
\end{equation}

\subsection{The finite dual coefficient method}

The anti-diagonal dual recurrence and terminal row relation in this section
are the finite mirror-dual CFOM of
Kim--Park--Ozdaglar--Diakonikolas--Ryu
\citep[Prop.~1 and Cor.~1]{KimParkOzdaglarDiakonikolasRyu2024}.  The query-before-row-update chronology is
spelled out because it is part of the oracle claim.

Starting from a queried point $q_0^{\rm D}$ for a differentiable convex
normalized function $F$, write $G_i=\grad F(q_i^{\rm D})$ and set
\begin{equation}\label{eq:below-dual-init}
 r_0=-b_{n,n}G_0=\frac{d_{n-1}}{u_n}G_0.
\end{equation}
For $k=0,\ldots,n-1$, perform the operations in the displayed order:
\begin{align}
 q_{k+1}^{\rm D}
 &=q_k^{\rm D}-\sum_{i=0}^{k}
   \alpha_{n-i,n-1-k}\grad h^*(r_i)
 =q_k^{\rm D}-d_{n-1-k}\grad h^*(r_k),
 \label{eq:below-dual-q}\\
 &\text{query }q_{k+1}^{\rm D}
   \text{ and set }G_{k+1}=\grad F(q_{k+1}^{\rm D}),
 \label{eq:below-dual-query}\\
 r_{k+1}
 &=r_k-\sum_{i=0}^{k+1}b_{n-i,n-1-k}G_i.
 \label{eq:below-dual-r}
\end{align}
Every gradient in \eqref{eq:below-dual-r} is therefore cached.

\begin{lemma}[Queried terminal-gradient estimate]\label{lem:below-dual}
Suppose every $q_0^{\rm D},\ldots,q_n^{\rm D}$ is queried and
\begin{equation}\label{eq:below-normalized-guard-dual}
 D_F(q_k^{\rm D},q_{k+1}^{\rm D})
 \ge\frac12\norm{G_k-G_{k+1}}_q^2
 \qquad(0\le k<n).
\end{equation}
Then
\begin{equation}\label{eq:below-dual-bound}
 h^*\bigl(\grad F(q_n^{\rm D})\bigr)
 \le\frac{F(q_0^{\rm D})-\inf F}{u_n}.
\end{equation}
The point whose gradient appears here was queried before the terminal identity
was formed.
\end{lemma}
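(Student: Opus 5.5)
We mirror the proof of Lemma~\ref{lem:below-primal}, now on the dual side, and use the pointwise identity of Lemma~\ref{lem:below-identity} together with \eqref{eq:below-RD-nonnegative} to get the sign of the residual for free. Set $F_\star=\inf F$ and pick any comparator $y$ (a minimizer if one exists, otherwise a near-minimizer followed by a limit). We build a dual potential $\mathcal V_k$ starting at $\mathcal V_0=w_0\cdot 0+\bigl(F(q_0^{\rm D})-F(y)\bigr)$, suitably normalized by the weights $w_i=1/u_{n-i}$. The subtracted increments are of three kinds: $D_F(y,q_{k}^{\rm D})$ type terms, which are nonnegative by convexity; the guarded terms $w_{k+1}\bigl[D_F(q_k^{\rm D},q_{k+1}^{\rm D})-\tfrac12\norm{G_k-G_{k+1}}_q^2\bigr]$, which are nonnegative by \eqref{eq:below-normalized-guard-dual}; and the mirror terms $D_{h^*}(r_{k},r_{k+1})-\tfrac12\norm{\grad h^*(r_{k+1})-\grad h^*(r_k)}_p^2$, which are nonnegative by \eqref{eq:below-conjugate-orientation}. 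This gives $\mathcal V_n\le\mathcal V_0$.

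Next we expand $\mathcal V_n$ exactly, as in \eqref{eq:below-U-terminal}. The recurrences \eqref{eq:below-dual-q} and \eqref{eq:below-dual-r} are precisely the $\alpha$-rows with anti-diagonal indexing and the $b$-rows of the dual CFOM. After summation by parts, therefore, all cross terms collapse into the free dual residual $\mathcal R_{\rm D}^{\Omega}(C,D)$ with $\Omega=\norm{\cdot}_p^2/2$, $C_i=G_i$ (up to the $w$-scaling built into $\mathcal R_{\rm D}$) and $D_i=\grad h^*(r_i)$. We also need the terminal row relation: by \eqref{eq:below-row-sums} and the initialization \eqref{eq:below-dual-init}, the final dual state satisfies $r_n=\sum_i(\cdot)G_i$ with total coefficient telescoping so that the terminal bookkeeping leaves $h^*(\grad F(q_n^{\rm D}))$ appearing with weight one, paired against $F(q_0^{\rm D})-F_\star$ with weight $w_n=1/u_0$ replaced by $1/u_n$ after normalization. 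The target is the identity
\[
 \mathcal V_n=h^*\bigl(\grad F(q_n^{\rm D})\bigr)-\frac{F(q_0^{\rm D})-F(y)}{u_n}+\mathcal R_{\rm D}^{\Omega}(C,D)+(\text{nonnegative Fenchel/convexity terms}),
\]
and \eqref{eq:below-RD-nonnegative} then yields \eqref{eq:below-dual-bound} after letting $F(y)\downarrow\inf F$.

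The main obstacle is this exact expansion: matching the oracle trajectory's bilinear terms to the four blocks of \eqref{eq:below-free-RD} with the right indices, and in particular getting the $P_k(C)$ block to telescope via \eqref{eq:below-P-telescope}, so that the terminal $h^*$ term emerges with coefficient exactly one and not with some leftover weight. I would first verify this for $n=1,2$ by hand, then follow the coefficient bijection of Lemma~\ref{lem:below-identity}, using evenness of $\Omega$ to reorient the Bregman increments. The chronology remark then costs nothing extra: each $G_{k+1}$ is defined in \eqref{eq:below-dual-query} before it enters \eqref{eq:below-dual-r}, and $q_n^{\rm D}$ is queried at step $k=n-1$.
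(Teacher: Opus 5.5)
Your architecture matches the paper's: a potential that decreases by three nonnegative blocks, an exact expansion into $h^*(r_n)$, a Bregman term at $r_0$ and $\mathcal R_{\rm D}^{\Omega}(G,Z)$ with $Z_i=\grad h^*(r_i)$, then \eqref{eq:below-RD-nonnegative}. But the comparator you import from Lemma~\ref{lem:below-primal} breaks the key expansion. The dual potential must be anchored at the terminal query itself: $\mathcal V_0=w_0\bigl(F(q_0^{\rm D})-F(q_n^{\rm D})\bigr)$, with subtracted convexity terms $\Delta w_k D_F(q_n^{\rm D},q_k^{\rm D})$, where $\Delta w_k=w_{k+1}-w_k\ge0$.

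Only with this anchor do all function values cancel: $w_0+\Delta w_0-w_1=0$, $\Delta w_k+w_k-w_{k+1}=0$, and $-w_0-\sum_k\Delta w_k+w_n=0$ at $q_n^{\rm D}$. Only then is every displacement $q_n^{\rm D}-q_k^{\rm D}=-\sum_{\ell=k}^{n-1}Q_\ell(Z)$ a trajectory quantity, which is exactly what produces the block $\sum_k\ip{P_k(G)}{Q_k(Z)}$. With a minimizer $y$ instead, your $\mathcal V_n$ equals the correct terminal energy plus
\[
 w_n\bigl(F(q_n^{\rm D})-F(y)\bigr)
 +\Bigl\langle\sum_{k=0}^{n-1}\Delta w_kG_k,\;y-q_n^{\rm D}\Bigr\rangle .
\]
The inner product here is unsigned and does not belong to $\mathcal R_{\rm D}^{\Omega}$. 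It is genuinely negative on guarded trajectories. Take $n=2$, so $w=(1,1,4)$, and $F(x)=x^2/2$ on $\R$, where every guard holds with equality. Start from $q_0^{\rm D}\ne0$ and take $y=0$. The method gives $r_1=G_1=q_1^{\rm D}\ne0$ and $q_2^{\rm D}=(1-\sigma/4)q_1^{\rm D}$. The extra term is then $2(q_2^{\rm D})^2-3q_1^{\rm D}q_2^{\rm D}=-(1-\sigma/4)(1+\sigma/2)(q_1^{\rm D})^2<0$. So your target identity, with only nonnegative leftovers, is false, and no limit over near-minimizers repairs it.

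Since $D_F(y,q_k^{\rm D})\ge0$ for every $y$, your potential inequality holds for any comparator. The one choice that kills the leftover exactly is $y=q_n^{\rm D}$. Then $\inf F$ enters only at the last step, through $F(q_n^{\rm D})\ge\inf F$, so no minimizer or limiting argument is needed.

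Your bookkeeping has two further problems. First, your $\mathcal V_0$ already carries $F(q_0^{\rm D})-F(y)$, so the expansion of $\mathcal V_n$ must not also carry $-(F(q_0^{\rm D})-F(y))/u_n$. As written, $\mathcal V_n\le\mathcal V_0$ would only give twice the claimed bound, and the constant one is used in \eqref{eq:below-terminal-normalized-gradient}. Second, the weight on the gap is $w_0=1/u_n$ from the outset; no replacement of $w_n=1/u_0$ occurs.

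Finally, the terminal row relation is not a telescoping of a total coefficient. In $r_n$, the coefficient of each $G_j$ with $j<n$ is minus a full row sum of $b$, which vanishes by \eqref{eq:below-row-sums}, while $G_n$ receives $-b_{0,0}=1$. Hence $r_n=G_n$ exactly. Besides $\mathcal R_{\rm D}^{\Omega}$, the only other term you must drop is $D_{h^*}(0,r_0)\ge0$.
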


\begin{proof}
First expand \eqref{eq:below-dual-r}.  The coefficient of $G_0$ in $r_n$ is the
negative row sum of $b_n$, the coefficient of $G_j$ for $1\le j<n$ is the
negative row sum of $b_{n-j}$, and these vanish by \eqref{eq:below-row-sums}.  The
coefficient of $G_n$ is $-b_{0,0}=1$.  Hence
\begin{equation}\label{eq:below-terminal-row}
 r_n=G_n=\grad F(q_n^{\rm D}).
\end{equation}
Chronology \eqref{eq:below-dual-query} shows that $q_n^{\rm D}$ is already queried.

Let $w_k=1/u_{n-k}$, $\Delta w_k=w_{k+1}-w_k$, and define
\begin{align*}
 \mathcal V_0&=w_0\bigl(F(q_0^{\rm D})-F(q_n^{\rm D})\bigr),\\
 \mathcal V_{k+1}
 &=\mathcal V_k-\Delta w_kD_F(q_n^{\rm D},q_k^{\rm D})\\
 &\quad-w_{k+1}\left[D_F(q_k^{\rm D},q_{k+1}^{\rm D})
 -\frac12\norm{G_k-G_{k+1}}_q^2\right]\\
 &\quad-\left[D_{h^*}(r_k,r_{k+1})
 -\frac12\norm{\grad h^*(r_{k+1})-\grad h^*(r_k)}_p^2\right].
\end{align*}
The weights $w_k$ are nondecreasing.  Thus every subtracted term is
nonnegative by convexity, \eqref{eq:below-normalized-guard-dual}, and
\eqref{eq:below-conjugate-orientation}; consequently
$\mathcal V_n\le\mathcal V_0$.

For completeness, expand the terminal energy.  With
$Z_i=\grad h^*(r_i)$, equation \eqref{eq:below-dual-q} says
$q_{k+1}^{\rm D}-q_k^{\rm D}=-Q_k(Z)$.  Pure function-value coefficients
cancel because
\[
 w_0+\Delta w_0-w_1=0,\qquad
 \Delta w_k+w_k-w_{k+1}=0,
\]
and the terminal coefficient telescopes.  Reversing the order in
$q_n^{\rm D}-q_k^{\rm D}=-\sum_{\ell=k}^{n-1}Q_\ell(Z)$ gives the remaining
function block
\begin{align}\label{eq:below-dual-function-block}
 &w_0(F(q_0^{\rm D})-F(q_n^{\rm D}))
 -\sum_{k=0}^{n-1}\Delta w_kD_F(q_n^{\rm D},q_k^{\rm D})
 -\sum_{k=0}^{n-1}w_{k+1}D_F(q_k^{\rm D},q_{k+1}^{\rm D})\notag\\
 &\qquad=\sum_{k=0}^{n-1}\ip{P_k(G)}{Q_k(Z)}.
\end{align}
Set $r_{-1}=0$.  The initialization and update give
\[
 \sum_{i=0}^kb_{n-i,n-k}G_i=r_{k-1}-r_k
 \qquad(0\le k\le n).
\]
Expanding the mirror Bregman terms therefore yields
\begin{align}\label{eq:below-dual-mirror-block}
 -\sum_{k=0}^{n-1}D_{h^*}(r_k,r_{k+1})
 &=h^*(r_n)+D_{h^*}(0,r_0)\notag\\
 &\quad+\sum_{k=0}^{n}
 \ip{\sum_{i=0}^kb_{n-i,n-k}G_i}{Z_k}.
\end{align}
Together, \eqref{eq:below-dual-function-block},
\eqref{eq:below-dual-mirror-block}, and the two quadratic increment blocks give
\begin{equation}\label{eq:below-V-terminal}
 \mathcal V_n=h^*(r_n)+D_{h^*}(0,r_0)
 +\mathcal R_{\rm D}^{\Omega}(G,Z),
 \qquad \Omega(y)=\frac12\norm y_p^2.
\end{equation}
The last two terms are nonnegative by the definition of a Bregman divergence
and \eqref{eq:below-RD-nonnegative}.  Hence, using \eqref{eq:below-terminal-row},
\[
 h^*(\grad F(q_n^{\rm D}))
 \le\mathcal V_n\le\mathcal V_0
 \le\frac{F(q_0^{\rm D})-\inf F}{u_n},
\]
which proves \eqref{eq:below-dual-bound}.
\end{proof}

\subsection[The observable trial at (M,D)]{The observable trial at $(M,D)$}

We now give the routine in Proposition~\ref{prop:belowtrial}.  Define
\begin{equation}\label{eq:below-trial-normalization}
 \kappa=\frac{MD}{\eps}>1,
 \qquad \delta=\frac\eps{MD}=\kappa^{-1},
 \qquad
 n=\left\lceil2\sqrt{\frac{MD}{\sigma\eps}}\right\rceil.
\end{equation}

\paragraph{Phase I.}
Use the normalized function
\begin{equation}\label{eq:below-F1}
 F_1(y)=\frac{f(x_0+Dy)-f(x_0)}{MD^2}.
\end{equation}
Set $P_0=x_0$.  The cached physical pair at $P_0$ supplies the pair at $0$.
Run
\eqref{eq:below-primal-s}--\eqref{eq:below-primal-x}.  After constructing
$x_{k+1}^{\rm P}$, query the physical point
\[
 P_{k+1}=x_0+Dx_{k+1}^{\rm P}.
\]
If its returned gradient has norm at most $\eps$, return
$(\Success,P_{k+1})$.  Otherwise test
$\mathsf C_M(P_k,P_{k+1})$; on failure, return $\Scale$ with the two cached
pairs and the failed numerical inequality.  If all $n$ guards pass, put
$Q_0=P_n$; its pair is cached.

\paragraph{Phase II.}
Recenter without a query:
\begin{equation}\label{eq:below-F2}
 F_2(y)=\frac{f(Q_0+Dy)-f(Q_0)}{MD^2}.
\end{equation}
Set $q_0^{\rm D}=0$ and use the cached pair at $Q_0$.  Precompute
\eqref{eq:below-alpha}--\eqref{eq:below-b-array}, initialize \eqref{eq:below-dual-init}, and
run \eqref{eq:below-dual-q}--\eqref{eq:below-dual-r}.  After forming
$q_{k+1}^{\rm D}$, first query the physical point
\[
 Q_{k+1}=Q_0+Dq_{k+1}^{\rm D}.
\]
If its returned gradient has norm at most $\eps$, return
$(\Success,Q_{k+1})$.  Otherwise test
$\mathsf C_M(Q_k,Q_{k+1})$; on failure, return $\Scale$ with the failed
inequality.  On a passing guard, form $r_{k+1}$ from the now cached gradients.
If all $n$ guards pass and no success was returned, return $\Radius$ together
with the cached terminal pair at $Q_n$.

The only nonlinear internal map is the explicit $\sigma\mathcal J_q$ in
\eqref{eq:below-hstar}.  Coefficients may be computed before the oracle interaction.
There is no line minimization, inverse-gradient call, or optimization oracle.

\begin{lemma}[Exact scaling of the guard]\label{lem:below-guard-scaling}
For normalized points $x,y$ and physical points
$X=c+Dx$, $Y=c+Dy$, where
\[
 F(y)=\frac{f(c+Dy)-f(c)}{MD^2},
\]
one has
\begin{equation}\label{eq:below-guard-equivalence}
 D_F(x,y)\ge\frac12
 \norm{\grad F(x)-\grad F(y)}_q^2
 \quad\Longleftrightarrow\quad
 \mathsf C_M(X,Y).
\end{equation}
\end{lemma}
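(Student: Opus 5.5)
The plan is a direct scaling computation: express $F$, $\grad F$, and $D_F$ in terms of the physical quantities, and check that both sides of \eqref{eq:below-guard-equivalence} become the same inequality up to one positive factor. Differentiate the definition of $F$ by the chain rule. This gives
\[
 \grad F(y)=\frac{D\,\grad f(c+Dy)}{MD^2}=\frac{\grad f(Y)}{MD},
\]
so that $\grad F(x)-\grad F(y)=(\grad f(X)-\grad f(Y))/(MD)$.

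Next compute the Bregman remainder. The constant $-f(c)/(MD^2)$ cancels in the difference, and $x-y=(X-Y)/D$. Substituting,
\[
 D_F(x,y)=\frac{f(X)-f(Y)}{MD^2}-\ip{\frac{\grad f(Y)}{MD}}{\frac{X-Y}{D}}
 =\frac{D_f(X,Y)}{MD^2},
\]
where $D_f$ is the remainder in \eqref{eq:cocoercivityguard}.

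The right side of the normalized guard is then
\[
 \frac12\norm{\grad F(x)-\grad F(y)}_q^2=\frac{\norm{\grad f(X)-\grad f(Y)}_q^2}{2M^2D^2}.
\]
Multiplying both sides of the normalized inequality by the positive number $MD^2$ gives
\[
 D_f(X,Y)\ge\frac{\norm{\grad f(X)-\grad f(Y)}_q^2}{2M},
\]
which is exactly $\mathsf C_M(X,Y)$. Since the multiplier is strictly positive (because $M,D>0$), the step is reversible, and this yields the equivalence.

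No real obstacle is expected. The only care needed is the orientation convention. $D_F(x,y)$ uses the gradient at the second argument, and $\mathsf C_M(X,Y)$ uses $D_f(X,Y)$ with the gradient at $Y$, so the arguments must be kept in the same order $X\leftrightarrow x$, $Y\leftrightarrow y$. One should also note that the $q$-norm scales homogeneously under the scalar $1/(MD)$.
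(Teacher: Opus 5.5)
Your proposal is correct and follows essentially the same route as the paper: a chain-rule computation giving $\grad F(y)=\grad f(Y)/(MD)$ and $D_F(x,y)=D_f(X,Y)/(MD^2)$, then substitution and multiplication by the positive factor $MD^2$. Your explicit remarks on keeping the argument order $x\leftrightarrow X$, $y\leftrightarrow Y$ and on needing $M,D>0$ for reversibility are exactly the details the paper's one-line proof leaves implicit.
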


\begin{proof}
Direct differentiation gives
\[
 \grad F(y)=\frac{\grad f(c+Dy)}{MD},
 \qquad
 D_F(x,y)=\frac{D_f(X,Y)}{MD^2}.
\]
Substitution proves the equivalence.
\end{proof}

\begin{proof}[Proof of Proposition~\ref{prop:belowtrial}]
A $\Success$ output is tested from the exact pair just returned, so its
gradient bound and queried-output status are immediate.  A $\Scale$ output
contains an actual failed $\mathsf C_M$ and therefore implies $M<L$ by
\eqref{eq:below-scale-guard}.

It remains to prove the conditional success statement.  Assume $D\ge R$ and
that every guard in both phases passes.  Choose a closest minimizer $x^*$ and
put $z=(x^*-x_0)/D$.  Then $z$ minimizes $F_1$ and
\begin{equation}\label{eq:below-radius-entry}
 \norm z_p\le1,
 \qquad h(z)\le\frac1{2\sigma}.
\end{equation}
By Lemma~\ref{lem:below-guard-scaling}, the Phase-I guards are exactly
\eqref{eq:below-normalized-guard-primal}.  Lemma~\ref{lem:below-primal} gives
\begin{equation}\label{eq:below-phase-one-gap}
 F_1(x_n^{\rm P})-\inf F_1
 \le\frac1{2\sigma u_n}.
\end{equation}
Since Phase II merely recenters and uses the same physical scale,
\[
 F_2(0)-\inf F_2
 =F_1(x_n^{\rm P})-\inf F_1.
\]
The Phase-II guards are exactly \eqref{eq:below-normalized-guard-dual}, so
Lemma~\ref{lem:below-dual}, \eqref{eq:below-hstar}, and
\eqref{eq:below-phase-one-gap} imply
\[
 \frac\sigma2\norm{\grad F_2(q_n^{\rm D})}_q^2
 \le\frac1{2\sigma u_n^2}.
\]
Therefore
\begin{equation}\label{eq:below-terminal-normalized-gradient}
 \norm{\grad F_2(q_n^{\rm D})}_q
 \le\frac1{\sigma u_n}
 =\frac4{\sigma n^2}
 \le\frac\eps{MD}=\delta,
\end{equation}
where the last inequality is the choice of $n$ in
\eqref{eq:below-trial-normalization}.  Returning to physical units gives
\[
 \norm{\grad f(Q_n)}_q
 =MD\norm{\grad F_2(q_n^{\rm D})}_q\le\eps.
\]
The point $Q_n$ was queried before its guard and before $r_n$ was formed.
Thus
\[
 D\ge R\ \text{and all guards pass}\quad\Longrightarrow\quad\Success.
\]
The routine's outcomes are exhaustive.  An all-passing execution whose
queried gradients all exceed $\eps$ can therefore occur only when $D<R$;
this is precisely the implication attached to $\Radius$.

Phase I makes at most $n$ new queries $P_1,\ldots,P_n$.  Phase II reuses
$P_n=Q_0$ and makes at most $n$ new queries $Q_1,\ldots,Q_n$.  Guards use
cached values and gradients.  Hence
\[
 N_{\rm local}\le2n
 \le4\sqrt{\frac{MD}{\sigma\eps}}+2.
\]
Since $MD/\eps>1$, the additive two is at most
$2\sqrt{MD/\eps}$, proving \eqref{eq:below-local-count}.  No norm equivalence or
dimension-dependent constant has been used.
\end{proof}

\section[The Euclidean p=2 branch]{The Euclidean $p=2$ branch}
\label{sec:euclidean}

The Euclidean branch combines an accelerated function-gap phase with OGM-G,
as in Nesterov--Gasnikov--Guminov--Dvurechensky
\citep[Rem.~1, p.~9 and Eq.~(13)]{NesterovGasnikovGuminovDvurechensky2021},
building on Nesterov's regularization route \citep{Nesterov2012}.
The backward recurrence and terminal-gradient certificate are from
Kim--Fessler
\citep[Lem.~6.1, Thm.~6.1, Sec.~6.3]{KimFessler2021}; the all-pairs predicate
used to guard that certificate is \eqref{eq:interpguard}, due to
Taylor--Hendrickx--Glineur.  Here these ingredients are embedded in a finite
observable trial with a terminal guard and the same three outcomes.

The branch has two phases because the two available certificates control
different quantities.  Phase~A uses the trial radius $D$ to turn accelerated
function-value decrease into a bound on $f(U)-f^*$.  Phase~B starts from that
queried point $U$ and applies the OGM-G recurrence, whose finite-data
certificate turns the gap into a bound on the terminal gradient.  Neither
phase knows $R$ or $f^*$: those quantities appear only in the conditional
proof that a valid radius forces success.

The guard design makes the OGM-G certificate checkable under a trial
scale.  Phase~A checks every upper model that its estimate-sequence proof uses.
Phase~B checks all ordered smooth-convex interpolation inequalities on its
finite data and also queries the final gradient step $v_n$ to check terminal
descent.  A failed check therefore proves that the scale guess is too small,
while a completed nonsuccessful two-phase trial proves the same for the
radius guess.

\subsection{Phase A: guarded function-gap estimate sequence}

Set
\[
 A_0=0,\qquad x_0^a=w_0^a=x_0,\qquad
 \Psi_0(x)=\frac M2\norm{x-x_0}_2^2.
\]
At iteration $k$, choose the positive $a_{k+1}$ satisfying
\begin{equation}\label{eq:euclideanweights}
 a_{k+1}^2=A_k+a_{k+1}=:A_{k+1},
\end{equation}
and set
\[
 y_k=\frac{A_kx_k^a+a_{k+1}w_k^a}{A_{k+1}}.
\]
Query $y_k$, write $g_k=\grad f(y_k)$, and define
\begin{align*}
 \Psi_{k+1}(x)
 &=\Psi_k(x)+a_{k+1}[f(y_k)+\ip{g_k}{x-y_k}],\\
 w_{k+1}^a&=\operatorname*{argmin}_{x\in\R^d}\Psi_{k+1}(x),\\
 x_{k+1}^a
 &=\frac{A_kx_k^a+a_{k+1}w_{k+1}^a}{A_{k+1}}.
\end{align*}
Query $x_{k+1}^a$ and require $\mathsf U_M(y_k,x_{k+1}^a)$; failure
returns $\Scale$.

\begin{lemma}[Guarded Euclidean gap]\label{lem:euclideangap}
If the first $m$ guards pass and $D\ge R$, then
\begin{equation}\label{eq:euclideangap}
 f(x_m^a)-f^*\le\frac{MD^2}{2A_m}
 \le\frac{2MD^2}{(m+1)^2}.
\end{equation}
\end{lemma}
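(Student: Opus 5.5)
The plan is the standard Nesterov estimate-sequence argument, run only through the guarded steps. The aim is an invariant that uses smoothness only through the checked predicates $\mathsf U_M(y_k,x_{k+1}^a)$. Set
\[
 \Psi_k^*:=\min_x\Psi_k(x)=\Psi_k(w_k^a).
\]
I would prove by induction that $A_kf(x_k^a)\le\Psi_k^*$ for every $k\le m$. The base case is trivial because $A_0=0$ and $\Psi_0^*=0$.

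\emph{Upper bound on $\Psi_m^*$.} By convexity each linearization satisfies $f(y_k)+\ip{g_k}{x-y_k}\le f(x)$. Hence
\[
 \Psi_m(x)\le A_mf(x)+\frac M2\norm{x-x_0}_2^2 .
\]
Evaluating at a closest minimizer $x^*$ with $\norm{x^*-x_0}_2=R\le D$ gives
\[
 A_mf(x_m^a)\le\Psi_m^*\le\Psi_m(x^*)\le A_mf^*+\frac M2D^2,
\]
which is the first inequality of \eqref{eq:euclideangap}.

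\emph{Inductive step.} Since $\Psi_k$ is $M$-strongly convex with minimizer $w_k^a$,
\[
 \Psi_{k+1}(x)\ge\Psi_k^*+\frac M2\norm{x-w_k^a}_2^2+a_{k+1}\bigl[f(y_k)+\ip{g_k}{x-y_k}\bigr].
\]
Use the inductive hypothesis together with convexity in the form $f(x_k^a)\ge f(y_k)+\ip{g_k}{x_k^a-y_k}$. The definition of $y_k$ makes the linear terms combine into $A_{k+1}\bigl[f(y_k)+\ip{g_k}{\bar x-y_k}\bigr]$, where $\bar x=(A_kx_k^a+a_{k+1}x)/A_{k+1}$. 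The difference satisfies $\bar x-y_k=(a_{k+1}/A_{k+1})(x-w_k^a)$, so
\[
 \frac M2\norm{x-w_k^a}^2=\frac{MA_{k+1}^2}{2a_{k+1}^2}\norm{\bar x-y_k}^2=\frac{MA_{k+1}}2\norm{\bar x-y_k}^2 ,
\]
using \eqref{eq:euclideanweights}. Therefore
\[
 \Psi_{k+1}(x)\ge A_{k+1}\Bigl[f(y_k)+\ip{g_k}{\bar x-y_k}+\frac M2\norm{\bar x-y_k}^2\Bigr].
\]
Evaluate this at $x=w_{k+1}^a$, for which $\bar x=x_{k+1}^a$. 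The passed guard $\mathsf U_M(y_k,x_{k+1}^a)$ says exactly that the bracket is at least $f(x_{k+1}^a)$. This closes the induction.

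This step is the main point to get right. One must evaluate at the actual minimizer $w_{k+1}^a$ rather than minimize the lower bound abstractly, because the guard is checked only at the queried point $x_{k+1}^a$. The step needs no knowledge of $L$; only $M$ and the checked predicate enter.

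\emph{Growth of $A_m$.} From $a_{k+1}^2=A_{k+1}$ we get $a_{k+1}=\tfrac12+\sqrt{\tfrac14+A_k}\ge\tfrac12+\sqrt{A_k}$. Since
\[
 A_{k+1}-A_k=a_{k+1}=\sqrt{A_{k+1}},
\]
we obtain $\sqrt{A_{k+1}}-\sqrt{A_k}=\sqrt{A_{k+1}}/(\sqrt{A_{k+1}}+\sqrt{A_k})\ge\tfrac12$. Starting from $A_1=1$, this yields $\sqrt{A_m}\ge(m+1)/2$. Hence $A_m\ge(m+1)^2/4$, giving the second inequality of \eqref{eq:euclideangap}.
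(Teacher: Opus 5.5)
Your proof is correct and follows essentially the same route as the paper. It uses the same invariant $A_kf(x_k^a)\le\Psi_k^*$, closed at $w_{k+1}^a$ through convexity at $y_k$, the checked guard $\mathsf U_M(y_k,x_{k+1}^a)$, and the exact cancellation from $a_{k+1}^2=A_{k+1}$; it then evaluates $\Psi_m$ at a closest minimizer and uses $\sqrt{A_{k+1}}\ge\sqrt{A_k}+\tfrac12$ starting from $A_1=1$. The only difference is cosmetic: you lower-bound $\Psi_{k+1}(x)$ for every $x$ before specializing to $x=w_{k+1}^a$, whereas the paper evaluates at $w_{k+1}^a$ from the start.
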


The proof establishes the estimate-sequence invariant only along the accepted finite
trajectory.  Strong convexity of $\Psi_k$ supplies a negative quadratic
error, the accepted upper model supplies a positive one, and the weight
identity $a_{k+1}^2=A_{k+1}$ cancels them exactly.  Evaluating the resulting
potential at a closest optimizer introduces $D$, and a scalar induction gives
the explicit lower bound on $A_m$.

\begin{proof}
The function $\Psi_k$ is $M$-strongly convex and minimized at $w_k^a$.
Assume $A_kf(x_k^a)\le\Psi_k^*$, which is immediate for $k=0$.  Strong
convexity gives
\[
 \Psi_k(w_{k+1}^a)
 \ge\Psi_k^*+\frac M2\norm{w_{k+1}^a-w_k^a}_2^2.
\]
Expanding $\Psi_{k+1}$ at $w_{k+1}^a$ and using the induction hypothesis
first gives
\begin{align*}
 \Psi_{k+1}^*
 &\ge A_k f(x_k^a)
 +a_{k+1}\bigl[f(y_k)+\ip{g_k}{w_{k+1}^a-y_k}\bigr]
 +\frac M2\norm{w_{k+1}^a-w_k^a}_2^2.
\end{align*}
Convexity at $y_k$ gives
\begin{align*}
 A_k f(x_k^a)
 &+a_{k+1}\bigl[f(y_k)+\ip{g_k}{w_{k+1}^a-y_k}\bigr]\\
 &\ge A_{k+1}\bigl[f(y_k)+\ip{g_k}{x_{k+1}^a-y_k}\bigr],
\end{align*}
because
\[
 x_{k+1}^a-y_k
 =\frac{a_{k+1}}{A_{k+1}}(w_{k+1}^a-w_k^a)
\]
and $x_{k+1}^a=(A_kx_k^a+a_{k+1}w_{k+1}^a)/A_{k+1}$.
The accepted upper model now produces
\begin{align*}
 \Psi_{k+1}^*
 &\ge A_{k+1}
   \bigl[f(y_k)+\ip{g_k}{x_{k+1}^a-y_k}\bigr]
   +\frac M2\norm{w_{k+1}^a-w_k^a}_2^2\\
 &\ge A_{k+1}f(x_{k+1}^a)
   -\frac{MA_{k+1}}2\norm{x_{k+1}^a-y_k}_2^2
   +\frac M2\norm{w_{k+1}^a-w_k^a}_2^2\\
 &=A_{k+1}f(x_{k+1}^a)
   -\frac M2\left(\frac{a_{k+1}^2}{A_{k+1}}-1\right)
    \norm{w_{k+1}^a-w_k^a}_2^2.
\end{align*}
The second inequality is the accepted upper-model guard.  The coefficient in
parentheses is zero because \eqref{eq:euclideanweights} says
$a_{k+1}^2=A_{k+1}$.  Thus
$A_{k+1}f(x_{k+1}^a)\le\Psi_{k+1}^*$, closing the induction.

Let $x^*$ be a closest optimizer.  Each affine summand in $\Psi_m$ is a
global lower model of $f$, so evaluation at $x^*$ yields
\[
 \Psi_m^*\le\Psi_m(x^*)
 \le\frac M2\norm{x^*-x_0}_2^2+A_mf^*
 \le\frac{MD^2}{2}+A_mf^*.
\]
Combining the two potential inequalities and dividing by $A_m$ gives the
first bound in \eqref{eq:euclideangap}.

It remains to make $A_m$ explicit.  The positive solution at the first step
is $a_1=A_1=1$.  Put $t_k=\sqrt{A_k}$.  Since
$a_{k+1}=\sqrt{A_{k+1}}=t_{k+1}$ and
$A_{k+1}=A_k+a_{k+1}$,
\[
 t_{k+1}^2-t_{k+1}=t_k^2,
 \qquad
 t_{k+1}=\frac{1+\sqrt{1+4t_k^2}}2\ge t_k+\frac12.
\]
Starting from $t_1=1$, induction gives
$t_m\ge(m+1)/2$, hence $A_m\ge(m+1)^2/4$.  Substitution proves the second
bound in \eqref{eq:euclideangap}.
\end{proof}

Phase~A has therefore produced a fully guarded, queried point $U=x_m^a$ whose
function gap is small whenever the radius guess is valid.  The next phase
uses no unknown optimal value operationally; $f^*$ appears only in the
certificate proving what the finite observations imply.

\subsection{Phase B: finite-data OGM-G}

\begin{lemma}[Guarded finite-data OGM-G]\label{lem:ogmg}
Fix $n\ge1$ and define backwards
\begin{align}
 \theta_n&=1,\nonumber\\
 \theta_i&=\frac{1+\sqrt{1+4\theta_{i+1}^2}}2,
 &&i=n-1,\ldots,1,\label{eq:ogmgtheta}\\
 \theta_0&=\frac{1+\sqrt{1+8\theta_1^2}}2.\nonumber
\end{align}
Starting from $u_0=U$, put $v_{-1}=U$.  For $i=0,\ldots,n-1$, query
$u_i$, set $g_i=\grad f(u_i)$ and $v_i=u_i-g_i/M$, and define
\begin{align}
 u_{i+1}
 &=v_i+
 \frac{(\theta_i-1)(2\theta_{i+1}-1)}
 {\theta_i(2\theta_i-1)}(v_i-v_{i-1})\nonumber\\
 &\quad+
 \frac{2\theta_{i+1}-1}{2\theta_i-1}(v_i-u_i).
 \label{eq:ogmgrecurrence}
\end{align}
Query $u_n$, write $g_n=\grad f(u_n)$, and set $v_n=u_n-g_n/M$.
Suppose $\mathsf I_M(i,j)$ holds for every ordered pair among the returned
triples $(u_i,f(u_i),g_i)_{i=0}^n$, and an additional query at $v_n$ satisfies
\begin{equation}\label{eq:terminaldescentguard}
 f(v_n)\le f(u_n)-\frac1{2M}\norm{g_n}_2^2.
\end{equation}
Then
\begin{equation}\label{eq:ogmgbound}
 \norm{g_n}_2^2
 \le\frac{2M[f(U)-f^*]}{\theta_0^2},
 \qquad \theta_0\ge\frac{n+1}{\sqrt2}.
\end{equation}
\end{lemma}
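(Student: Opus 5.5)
We plan to reproduce the Kim--Fessler OGM-G potential argument, replacing each use of global $L$-smoothness by one of the finitely many checked inequalities. The Kim--Fessler proof (their Thm.~6.1) is a certificate: a nonnegative combination of the smooth-convex interpolation inequalities $\mathsf I_L(i,j)$ on the data $\{(u_i,f(u_i),g_i)\}_{i=0}^n$, together with one inequality involving $f^*$, which telescopes to $\theta_0^2\|g_n\|_2^2\le 2L[f(u_0)-f(u_n)+\tfrac1{2L}\|g_n\|^2]$ or an equivalent form. We will run the same combination with $L$ replaced by the trial $M$. The recurrence \eqref{eq:ogmgrecurrence} uses step $1/M$, so the algebraic identity (the sum-of-squares collapse in their Lem.~6.1) holds verbatim with $M$ in place of $L$. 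Every interpolation inequality it invokes is exactly a guard $\mathsf I_M(i,j)$ that has passed.

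The steps are as follows.

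First, we write the Kim--Fessler multipliers $\lambda_{i,j}\ge0$ on ordered pairs $(i,j)$, expressed in terms of the $\theta_i$. We then verify that $\sum\lambda_{i,j}\,\mathsf I_M(i,j)$ equals
\[
\theta_0^2\|g_n\|_2^2/(2M)-\bigl(f(u_0)-f(u_n)\bigr)
\]
plus a nonnegative square. This is a pure finite identity in the free vectors $u_i,g_i$ once the $u_{i+1}$ are substituted from the recurrence. We treat it as the cited Lem.~6.1 with $L\to M$, checking only that no step used the true Lipschitz constant beyond the interpolation inequalities. Since all the $\mathsf I_M(i,j)$ pass, we obtain
\[
\tfrac{\theta_0^2}{2M}\|g_n\|_2^2\le f(U)-f(u_n).
\]

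Second, we bound $f(u_n)$ from below. We want $f(u_n)-f^*\ge \tfrac1{2M}\|g_n\|^2$. This is where the terminal guard \eqref{eq:terminaldescentguard} enters: $f^*\le f(v_n)\le f(u_n)-\|g_n\|^2/(2M)$, because $v_n$ is an actual point and so $f(v_n)\ge f^*$ holds for free. Adding this to the first inequality gives
\[
\tfrac{\theta_0^2+1}{2M}\|g_n\|^2\le f(U)-f^*,
\]
which is stronger than \eqref{eq:ogmgbound}. If the Kim--Fessler combination is normalized differently, for example ending with $f(u_n)-f^*$ weighted, the same terminal guard supplies the missing link.

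Third, we handle the scalar bound. From $\theta_i\ge\theta_{i+1}+\tfrac12$ for $1\le i<n$ we get $\theta_1\ge (n+1)/2$. Then $\theta_0\ge (1+\sqrt{8}\theta_1)/2\ge\sqrt2\,\theta_1\ge (n+1)/\sqrt2$.

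The main obstacle is the first step: confirming that the OGM-G certificate uses only pairwise interpolation inequalities among the queried triples, with nonnegative multipliers, and no inequality involving an optimizer $x^*$ or $f^*$ except through the terminal term. If their proof instead inserts an inequality between $u_i$ and an unqueried point, we would first try to rewrite that term through $v_n$ and the terminal guard. Failing that, we would use the Taylor--Hendrickx--Glineur interpolation fact that the passed guards guarantee an $M$-smooth convex interpolant $\tilde f$ of the data. The OGM-G theorem then applies to $\tilde f$ verbatim, and the conclusion transfers to $f$ via the terminal guard, since $\tilde f^*\ge f^*$ is not needed once the bound is phrased through $f(v_n)$.
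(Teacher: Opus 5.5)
Your outline has the paper's architecture: a nonnegative combination of the checked pairwise $\mathsf I_M$ inequalities, the terminal guard standing in for the single inequality that involves an optimizer, and the same scalar recursion for $\theta_0$, which is exactly the paper's last step. But the load-bearing first step is stated incorrectly and never verified. Interpolation among $u_0,\dots,u_n$ does \emph{not} give $\frac{\theta_0^2}{2M}\|g_n\|_2^2\le f(U)-f(u_n)$. Take $n=1$. Then $\theta_1=1$, $\theta_0=2$, and the recurrence gives $u_1=u_0-\tfrac32g_0/M$. For $f(x)=\tfrac M2x^2$ on $\R$ and $U=1$ one gets $u_1=-\tfrac12$, $g_1=-M/2$ and $v_1=0$. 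Every guard passes (the terminal one with equality), yet $f(U)-f(u_1)=3M/8<M/2=\frac{\theta_0^2}{2M}g_1^2$. In the same example $f(U)-f^*=M/2=\frac{\theta_0^2}{2M}g_1^2$, so the lemma is tight and your ``stronger'' conclusion with $\theta_0^2+1$ is also false. The correct $f^*$-free inequality is the one in your opening paragraph, $\frac{\theta_0^2-1}{2M}\|g_n\|_2^2\le f(U)-f(u_n)$. In step one you dropped the $\frac1{2M}\|g_n\|_2^2$ and then counted the terminal guard on top of it. The terminal guard supplies exactly that missing term and nothing more. As a sanity check: if your step-one inequality held, the lemma would follow from the free bound $f(u_n)\ge f^*$, and the query at $v_n$ would be pointless.

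The real content of the lemma is the certificate you defer to citation with unspecified multipliers. The paper sets $\psi_i=f(u_i)-f^*-\frac1{2M}\|g_i\|_2^2$ and $I_{ij}=\psi_i-\psi_j-\langle g_j,v_i-v_j\rangle$, which is exactly the checked quantity in \eqref{eq:interpguard}. It takes $\nu_0=1$, $\nu_i=\theta_0^2/(2\theta_i^2)$ and verifies the exact identity \eqref{eq:ogmgidentity}: weight $\nu_{i+1}$ on the consecutive pairs $(i,i+1)$, weight $\nu_{i+1}-\nu_i\ge0$ on the pairs $(n,i)$, plus $\psi_n$, with no leftover square. The $\psi$-coefficients telescope to $\psi_0$. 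The inner products collapse to $\frac1{2M}\|g_0\|_2^2-\frac{\theta_0^2}{2M}\|g_n\|_2^2$ through $p_{i+1}=(1-1/\theta_i)p_i+g_i/\theta_i$ and the identities $g_i-p_i=\theta_i(p_{i+1}-p_i)$, $M(v_i-v_{i-1})=-\theta_i(p_i+p_{i+1})$ and $\sum_{j<k}(\nu_{j+1}-\nu_j)g_j=\nu_kp_k$. Only this explicit form shows that $f^*$ enters solely through $\psi_n$, which is what lets the query at $v_n$ replace the $(u_n,x^*)$ inequality. Your interpolant fallback does not avoid this. The black-box OGM-G bound for an $M$-smooth convex interpolant $\tilde f$ controls $f(U)-\tilde f^*$, and the largest achievable $\tilde f^*$ is $\min_i\{f(u_i)-\|g_i\|_2^2/(2M)\}$. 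The terminal guard bounds $f^*$ only by the $i=n$ term. So ``phrasing the bound through $f(v_n)$'' already presupposes the $f^*$-free inequality, that is, the certificate structure itself.
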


\paragraph{Proof roadmap.}
Kim--Fessler supply the OGM-G coefficient recurrence and terminal-gradient
certificate; Taylor--Hendrickx--Glineur supply the exact smooth-convex
interpolation inequalities checked here.  We must show that these finite
checks alone imply the certificate under the trial scale $M$.  We rewrite the
interpolation inequalities as nonnegative quantities $I_{ij}$, choose
nonnegative multipliers $\nu_i$, and verify the weighted identity
\eqref{eq:ogmgidentity}.  Its function-value coefficients telescope directly;
its inner products telescope after introducing the auxiliary sequence $p_i$.
The terminal descent query supplies the final nonnegative term that
interpolation among the $u_i$ points does not provide.

\begin{proof}
The recurrence and coefficient sequence are the OGM-G construction of
Kim--Fessler \citep[Sec.~6.3]{KimFessler2021}; we include the full finite
certificate because its observability is load-bearing here.  Put
\[
 \psi_i=f(u_i)-f^*-\frac1{2M}\norm{g_i}_2^2,
 \qquad I_{ij}=\psi_i-\psi_j-\ip{g_j}{v_i-v_j}.
\]
Because $v_i=u_i-g_i/M$, direct expansion gives
\begin{align*}
 I_{ij}
 &=f(u_i)-f(u_j)-\ip{g_j}{u_i-u_j}\\
 &\quad-\frac1{2M}
 \bigl(\norm{g_i}_2^2+\norm{g_j}_2^2-2\ip{g_i}{g_j}\bigr)\\
 &=f(u_i)-f(u_j)-\ip{g_j}{u_i-u_j}
   -\frac1{2M}\norm{g_i-g_j}_2^2.
\end{align*}
Thus the checked interpolation inequalities \eqref{eq:interpguard} are
exactly the assertions $I_{ij}\ge0$.  The terminal guard is logically
separate: since $f^*\le f(v_n)$,
\[
 \psi_n=f(u_n)-f^*-\frac1{2M}\norm{g_n}_2^2
 \ge f(v_n)-f^*\ge0.
\]
Without the query at $v_n$, this final nonnegative remainder would not be
certified by interpolation among the $u_i$ points.

Define
\[
 \nu_0=1,\qquad
 \nu_i=\frac{\theta_0^2}{2\theta_i^2}\quad(1\le i\le n).
\]
The backward recurrence implies
\[
 \theta_i^2-\theta_i=\theta_{i+1}^2\quad(1\le i<n),
 \qquad \theta_0^2-\theta_0=2\theta_1^2
\]
and $\theta_i>\theta_{i+1}$.  Hence $\nu_i$ is nondecreasing.  Put
$\delta_i:=\nu_{i+1}-\nu_i\ge0$.  The certificate to be verified is
\begin{align}
 f(U)-f^*-\frac{\theta_0^2}{2M}\norm{g_n}_2^2
 &=\sum_{i=0}^{n-1}\nu_{i+1}I_{i,i+1}\nonumber\\
 &\quad+\sum_{i=0}^{n-1}(\nu_{i+1}-\nu_i)I_{n,i}
 +\psi_n.\label{eq:ogmgidentity}
\end{align}
We first verify the coefficients of the $\psi_i$ terms.  In the first sum,
their coefficients are $\nu_1$ for $\psi_0$,
$\nu_{i+1}-\nu_i$ for $1\le i<n$, and $-\nu_n$ for $\psi_n$.
The second sum subtracts $\delta_i\psi_i$ and adds
$(\sum_i\delta_i)\psi_n=(\nu_n-\nu_0)\psi_n$; the final term adds one
more $\psi_n$.  Since $\nu_0=1$, all intermediate and terminal
coefficients cancel and the remaining coefficient of $\psi_0$ is one.

It remains to verify the inner-product part.  Let $p_0=0$ and define
\[
 p_{i+1}=\left(1-\frac1{\theta_i}\right)p_i+\frac1{\theta_i}g_i.
\]
This recurrence gives
\begin{equation}\label{eq:ogmgpdifference}
 g_i-p_i=\theta_i(p_{i+1}-p_i).
\end{equation}
We next verify the second recurrence identity for $0\le i\le n$:
\begin{equation}\label{eq:ogmgincrement}
 M(v_i-v_{i-1})=-\theta_i(p_i+p_{i+1}).
\end{equation}
For $i=0$, $v_{-1}=u_0$, $v_0=u_0-g_0/M$, $p_0=0$, and
$p_1=g_0/\theta_0$, so both sides equal $-g_0$.  Suppose the identity holds
at some $i<n$.  Write the two coefficients in \eqref{eq:ogmgrecurrence} as
\[
 \alpha_i=\frac{(\theta_i-1)(2\theta_{i+1}-1)}
 {\theta_i(2\theta_i-1)},
 \qquad
 \beta_i=\frac{2\theta_{i+1}-1}{2\theta_i-1}.
\]
Since $v_i-u_i=-g_i/M$, the OGM-G update and the induction hypothesis give
\begin{align*}
 -M(v_{i+1}-v_i)
 &=\alpha_i\theta_i(p_i+p_{i+1})+\beta_i g_i+g_{i+1}\\
 &=(2\theta_{i+1}-1)p_{i+1}+g_{i+1}\\
 &=\theta_{i+1}(p_{i+1}+p_{i+2}).
\end{align*}
The middle equality uses
$g_i=\theta_i p_{i+1}-(\theta_i-1)p_i$ and the displayed values of
$\alpha_i,\beta_i$; the last is the recurrence defining $p_{i+2}$.
This proves the displayed identity through $i=n$.
For $k=1$, the modified initial relation
$\theta_0^2-\theta_0=2\theta_1^2$ gives
$\delta_0=\nu_1/\theta_0$, while $p_1=g_0/\theta_0$; hence
$\delta_0g_0=\nu_1p_1$.  For $k\ge1$, the ordinary theta relation gives
\[
 \nu_{k+1}\left(1-\frac1{\theta_k}\right)=\nu_k,
 \qquad
 \frac{\nu_{k+1}}{\theta_k}=\delta_k.
\]
The base case above gives
$\sum_{j=0}^{0}\delta_jg_j=\nu_1p_1$.  If the identity holds at $k$, then
\begin{align*}
 \sum_{j=0}^{k}\delta_jg_j
 &=\nu_kp_k+\delta_kg_k\\
 &=\nu_{k+1}\left(1-\frac1{\theta_k}\right)p_k
   +\frac{\nu_{k+1}}{\theta_k}g_k
 =\nu_{k+1}p_{k+1}.
\end{align*}
Thus the recurrence for $p_{k+1}$ and the multiplier relations prove
\begin{equation}\label{eq:ogmgcumulative}
 \sum_{j=0}^{k-1}\delta_jg_j=\nu_kp_k
 \qquad(1\le k\le n).
\end{equation}

Now let $\Delta v_k=v_k-v_{k-1}$.  The inner-product contribution on the
right of \eqref{eq:ogmgidentity} is
\begin{align*}
 Q_{\rm ip}
 &=\sum_{k=1}^n\nu_k\ip{g_k}{\Delta v_k}
   -\sum_{i=0}^{n-1}\delta_i\ip{g_i}{v_n-v_i}\\
 &=\sum_{k=1}^n
   \ip{\nu_k g_k-\sum_{i=0}^{k-1}\delta_i g_i}{\Delta v_k}\\
 &=\sum_{k=1}^n\nu_k\ip{g_k-p_k}{\Delta v_k},
\end{align*}
where the second line expands
$v_n-v_i=\sum_{k=i+1}^n\Delta v_k$ and the last line uses
\eqref{eq:ogmgcumulative}.  Applying \eqref{eq:ogmgpdifference} and
\eqref{eq:ogmgincrement} gives
\begin{align*}
 MQ_{\rm ip}
 &=\sum_{k=1}^n\nu_k\theta_k^2
   (\norm{p_k}_2^2-\norm{p_{k+1}}_2^2)\\
 &=\frac12\norm{g_0}_2^2
   -\frac{\theta_0^2}{2}\norm{g_n}_2^2.
\end{align*}
Indeed, $\nu_k\theta_k^2=\theta_0^2/2$,
$p_1=g_0/\theta_0$, and $p_{n+1}=g_n$ because $\theta_n=1$.  Adding this
inner-product contribution to
$\psi_0=f(U)-f^*-\|g_0\|_2^2/(2M)$ proves
\eqref{eq:ogmgidentity} exactly.

Every multiplier in \eqref{eq:ogmgidentity} is nonnegative, every $I_{ij}$
used there has been checked, and $\psi_n\ge0$.  Dropping the nonnegative
right-hand side and rearranging yields the first inequality in
\eqref{eq:ogmgbound}.

Finally, for $i\ge1$,
\[
 \theta_i=\frac{1+\sqrt{1+4\theta_{i+1}^2}}2
 \ge\theta_{i+1}+\frac12.
\]
Since $\theta_n=1$, this gives $\theta_1\ge(n+1)/2$.  The modified first
relation gives $\theta_0^2\ge2\theta_1^2$, and hence
$\theta_0\ge\sqrt2\theta_1\ge(n+1)/\sqrt2$.
\end{proof}

The certificate uses only the queried values and gradients, the recurrence,
and the terminal descent observation.  In the controller, this matters as
much as the numerical rate: if any required inequality fails, the precise
failure itself proves that the current scale is below $L$.

\begin{proposition}[The Euclidean trial]\label{prop:euclideantrial}
Let
\begin{equation}\label{eq:euclideanhorizon}
 m=n=\left\lceil2\sqrt{\frac{MD}{\eps}}\right\rceil.
\end{equation}
Run Phase A for $m$ iterations, put $U=x_m^a$, and then run
\Cref{lem:ogmg} for $n$ iterations.  Return $\Scale$ at the first failed
upper-model, interpolation, or terminal descent guard; otherwise return
$\Success$ if $\norm{g_n}_2\le\eps$ and $\Radius$ if not.  Every returned
cause has the implications in \Cref{prop:certification}, and the trial uses
at most $C\sqrt{MD/\eps}$ pair-oracle calls.
\end{proposition}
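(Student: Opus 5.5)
The proof of \Cref{prop:euclideantrial} follows the pattern of \Cref{prop:belowtrial}: check the three outcome implications of \Cref{prop:certification} one at a time, then count queries.

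\emph{Scale outcomes.} A $\Scale$ return is triggered only by an actually failed predicate. In Phase~A this is $\mathsf U_M(y_k,x_{k+1}^a)$. In Phase~B it is some $\mathsf I_M(i,j)$ on the cached triples, or the terminal descent guard \eqref{eq:terminaldescentguard} at the queried $v_n$. The scale-returns part of the proof of \Cref{prop:certification} already shows that each of these predicates holds whenever $M\ge L$. Its contrapositive \eqref{eq:scalecertificate} therefore gives $M<L$, and nothing new is needed here.

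\emph{Radius outcomes.} This is the substantive step: prove the forward implication \eqref{eq:radiusforward}. Assume $D\ge R$ and that every guard passes. Since the Phase~A guards pass, \Cref{lem:euclideangap} with $m$ iterations gives
\[
 f(U)-f^*\le\frac{2MD^2}{(m+1)^2}.
\]
The queried point $U=x_m^a$ serves as the Phase~B start $u_0$. Since all interpolation guards and the terminal guard pass, \Cref{lem:ogmg} gives
\[
 \norm{g_n}_2^2\le\frac{2M[f(U)-f^*]}{\theta_0^2}\le\frac{4M^2D^2}{(m+1)^2(n+1)^2},
 \qquad\text{so}\qquad \norm{g_n}_2\le\frac{2MD}{(m+1)(n+1)}.
\]
With $m=n\ge2\sqrt{MD/\eps}$ we have $(m+1)(n+1)>4MD/\eps$, hence $\norm{g_n}_2<\eps/2\le\eps$. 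The output $u_n$ was queried, so this is a queried $\Success$. By exhaustiveness, a completed trial with $\norm{g_n}_2>\eps$ forces $D<R$, which is \eqref{eq:radiuscertificate}.

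One bookkeeping point needs care. \Cref{lem:euclideangap} evaluates $\Psi_m$ at a closest minimizer and uses $\norm{x^*-x_0}_2\le D$, which is exactly the role of the hypothesis $D\ge R$ when $p=2$. I would also allow an early $\Success$ whenever some queried gradient is already small; this does not affect the argument.

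\emph{Cost.} Phase~A makes two queries per iteration ($y_k$ and $x_{k+1}^a$), so at most $2m$. Phase~B queries $u_1,\dots,u_n$ (the point $u_0=U$ is cached) plus $v_n$, so at most $n+1$. The guards use only cached data; the $\mathsf I_M$ checks are $O(n^2)$ arithmetic but no oracle calls. The total is at most $3m+1\le 6\sqrt{MD/\eps}+4$. The invariant $D\ge G/M$ together with $G>\eps$ gives $MD/\eps>1$, so the additive constant is absorbed and the count is at most $C\sqrt{MD/\eps}$ with, for instance, $C=10$.

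The only real obstacle is making sure the chronology matches the lemma hypotheses. Each $\Scale$ must come from a predicate actually evaluated on returned data; this holds because guards are tested only after their points are queried. The terminal query at $v_n$ must precede the $\Success$/$\Radius$ decision, so a $\Radius$ return really means every guard named in \Cref{prop:certification} passed.
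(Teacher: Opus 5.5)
Your proposal is correct and follows the paper's proof essentially step for step: \Cref{lem:euclideangap} bounds the gap at $U$, \Cref{lem:ogmg} converts it to a terminal-gradient bound, and the forward implication gives the $\Radius$ contrapositive. The scale implications are inherited from \Cref{prop:certification}, and the count is $2m+n+1$.

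There is one arithmetic slip. \Cref{lem:ogmg} only gives $\theta_0^2\ge(n+1)^2/2$, so the chain yields $\norm{g_n}_2^2\le 8M^2D^2/[(m+1)^2(n+1)^2]$ and hence $\norm{g_n}_2\le\eps/\sqrt2$. Your displayed $4M^2D^2$ and $\eps/2$ would need $\theta_0\ge n+1$, which the lemma does not provide, but the conclusion $\norm{g_n}_2\le\eps$ is unaffected.
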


Under a valid radius and passing guards, Phase~A supplies a function-gap
bound at the cached point $U$.  The finite-data OGM-G certificate converts
that bound to a squared terminal-gradient estimate.  We substitute the equal
horizons explicitly and obtain the target.  The forward success implication
then gives the $\Radius$ contrapositive, while each of the three guard families
gives the implication attached to $\Scale$.  We finish with the exact
$2m+n+1$ query count.

\begin{proof}
If $D\ge R$ and no guard fails, \Cref{lem:euclideangap} gives
\[
 f(U)-f^*\le\frac{2MD^2}{(m+1)^2}.
\]
Using this as the input to \Cref{lem:ogmg} and then using
$\theta_0^2\ge(n+1)^2/2$ yields
\begin{align*}
 \norm{g_n}_2^2
 &\le\frac{2M[f(U)-f^*]}{\theta_0^2}\\
 &\le\frac{2M}{\theta_0^2}
       \frac{2MD^2}{(m+1)^2}\\
 &\le\frac{8M^2D^2}{(m+1)^2(n+1)^2}.
\end{align*}
Taking square roots gives
\[
 \norm{g_n}_2
 \le\frac{2\sqrt2\,MD}{(m+1)(n+1)}.
\]
Since $m=n=\lceil2\sqrt{MD/\eps}\rceil$,
\[
 (m+1)(n+1)\ge4\frac{MD}{\eps},
 \qquad
 \norm{g_n}_2\le\frac{\eps}{\sqrt2}<\eps.
\]
Thus we have proved the forward implication
\[
 D\ge R\ \text{ and all guards pass}\quad\Longrightarrow\quad\Success.
\]
Its contrapositive proves that a completed guarded nonsuccess has $D<R$.

For the scale implication, an upper-model or interpolation failure is
impossible when $M\ge L$ by \eqref{eq:descent} and
\eqref{eq:interpguard}.  The calculation in
\Cref{prop:certification} shows the same for the terminal descent guard.
Hence every actual guard failure implies $M<L$.

For the oracle count, Phase~A makes at most two pair queries per iteration,
one at $y_k$ and one at $x_{k+1}^a$, for at most $2m$ calls.  Phase~B starts
from $u_0=U=x_m^a$, whose pair is already cached from Phase~A.  It needs at
most the $n$ new iterate pairs $u_1,\ldots,u_n$ and one additional pair at
$v_n$ for \eqref{eq:terminaldescentguard}.  All $(n+1)^2$ ordered
interpolation checks use cached finite data and make no oracle call.  Thus
\[
 N_{\rm trial}\le2m+n+1
 \le C\sqrt{MD/\eps},
\]
where the final queried gradient is $g_n$ itself.
\end{proof}

\section[The local solver and lower bound for p>2]{The local solver and lower bound for $p>2$}
\label{sec:above}

This section proves \Cref{prop:pgtwo-optimality} and constructs the finite
routine used by \Cref{alg:controller}.  For $p>2$, the conjugate exponent
$q=p/(p-1)$ is below two.  The dimension-free geometry is supplied by
$h(x)=\|x\|_p^p/p$, while the finite-horizon anti-diagonal construction turns
a function-gap method into a terminal-gradient method.  The mirror-duality
theorem of Kim et al. uses a quadratically strongly convex
distance-generating function and therefore does not apply directly to this $h$
\citep[Thm.~1 and Cor.~3]{KimParkOzdaglarDiakonikolasRyu2024}.  The proof
keeps the quadratic gradient-increment block, bounds the $p$-power mirror
block through a mixed residual, and balances its accumulated error against
the terminal weight.  Phase~I reduces the gap only to order $\eps D$;
Phase~II converts that gap into a queried terminal-gradient bound.

The finite CFOM and its anti-diagonal coefficient construction are adapted
from Kim--Park--Ozdaglar--Diakonikolas--Ryu.  The two-stage use belongs to the
broader function-gap-then-gradient lineage of
Nesterov--Gasnikov--Guminov--Dvurechensky and Kim--Fessler
\citep{KimParkOzdaglarDiakonikolasRyu2024,
NesterovGasnikovGuminovDvurechensky2021,KimFessler2021}.  We cite the
coefficient identities where they enter the proof; the mixed
residual estimate, its accumulated error, and the accuracy-dependent weight
balance are proved directly.

\subsection[The p-power geometry and finite primal method]{The $p$-power geometry and finite primal method}

For this section put
\begin{equation}\label{eq:above-mirror-pair}
 h(x):=\frac1p\norm{x}_p^p,
 \qquad
 h^*(s):=\frac1q\norm{s}_q^q,
 \qquad
 a_p:=\frac{2^{2-p}}p.
\end{equation}
This is the standard above-two power geometry in the general-norm
construction of Diakonikolas--Guzm\'an
\citep[Eq.~(18), Sec.~3]{DiakonikolasGuzman2024}.  Its conjugate mirror map
is explicit:
\begin{equation}\label{eq:above-mirror-map}
 [\nabla h^*(s)]_j=\operatorname{sgn}(s_j)|s_j|^{q-1}.
\end{equation}
For $D_\phi(x,y)=\phi(x)-\phi(y)-\langle\nabla\phi(y),x-y\rangle$,
coordinatewise uniform convexity and Fenchel conjugacy give
\begin{align}
 D_h(x,y)&\ge a_p\norm{x-y}_p^p,
 \label{eq:above-uniform-remainder}\\
 D_{h^*}(s,t)
 &=D_h\bigl(\nabla h^*(t),\nabla h^*(s)\bigr).
 \label{eq:above-conjugacy}
\end{align}
The first inequality follows by summing
\[
 \frac{|a|^p-|b|^p}{p}-|b|^{p-2}b(a-b)
 \ge \frac{2^{2-p}}p|a-b|^p
\]
over coordinates.  The reversed arguments in
\eqref{eq:above-conjugacy} fix the orientation used later.  In particular,
if $v_k=\nabla h^*(s_k)$, then
\begin{equation}\label{eq:above-oriented-remainder}
 D_{h^*}(s_k,s_{k+1})
 =D_h(v_{k+1},v_k)
 \ge a_p\norm{v_{k+1}-v_k}_p^p.
\end{equation}

We first work with a convex differentiable normalized function $F$ and a
finite horizon $N\ge1$.  Choose positive nondecreasing weights
\begin{equation}\label{eq:above-general-weights}
 u_{-1}=0<u_0\le\cdots\le u_{N-1},
 \qquad u_N=u_{N-1},
 \qquad d_k:=u_k-u_{k-1}.
\end{equation}
The plateau makes $d_N=0$, which will close the terminal endpoint.  Starting
from $s_0=v_0=x_0^{\rm P}=0$, for $k=0,\ldots,N-1$ query
$x_k^{\rm P}$, set $g_k=\nabla F(x_k^{\rm P})$, and compute
\begin{align}
 s_{k+1}&=s_k-d_kg_k,
 \label{eq:above-primal-s}\\
 v_{k+1}&=\nabla h^*(s_{k+1}),
 \label{eq:above-primal-v}\\
 x_{k+1}^{\rm P}
 &=\frac{u_k}{u_{k+1}}x_k^{\rm P}
 +\frac{d_{k+1}}{u_{k+1}}v_{k+1}
 +\frac{d_k}{u_{k+1}}(v_{k+1}-v_k).
 \label{eq:above-primal-x}
\end{align}
After the final update, query $x_N^{\rm P}$ and set
$g_N=\nabla F(x_N^{\rm P})$.  Thus the endpoint chronology is part of the
algorithm: the final point is queried before it can be returned.

We use the finite CFOM coefficient representation of
\citet[Sec.~3.1, Eqs.~(1)--(3) and Prop.~1]
{KimParkOzdaglarDiakonikolasRyu2024}.  Define
\begin{align}
 \alpha_{k+1,i}&:=d_k\mathbf 1_{\{i=k\}},
 \label{eq:above-alpha}\\
 c_{0,0}&:=1,\nonumber\\
 c_{k+1,i}
 &:=\frac{u_k}{u_{k+1}}c_{k,i}
 +\frac{d_{k+1}+d_k}{u_{k+1}}\mathbf 1_{\{i=k+1\}}
 -\frac{d_k}{u_{k+1}}\mathbf 1_{\{i=k\}},
 \label{eq:above-c-array}\\
 b_{k+1,i}&:=c_{k,i}-c_{k+1,i},
 \qquad b_{0,0}:=-1,
 \label{eq:above-b-array}
\end{align}
where $c_{k,i}=0$ outside $0\le i\le k$.  Then
\begin{align}
 s_{k+1}&=s_k-\sum_{i=0}^k\alpha_{k+1,i}g_i,
 \nonumber\\
 x_k^{\rm P}&=\sum_{i=0}^kc_{k,i}v_i,
 \qquad
 x_{k+1}^{\rm P}=x_k^{\rm P}
 -\sum_{i=0}^{k+1}b_{k+1,i}v_i,
 \label{eq:above-cfom-form}
\end{align}
and induction in \eqref{eq:above-c-array} gives
\begin{equation}\label{eq:above-row-sums}
 \sum_{i=0}^kc_{k,i}=1,
 \qquad
 \sum_{i=0}^{k+1}b_{k+1,i}=0.
\end{equation}

\subsection{The primal residual and its accumulated error}

Let $z$ minimize $F$.  Along a transcript on which the normalized form of
$\mathsf C_M$ holds at every consecutive pair, define
\begin{align}
 \mathcal U_0&:=h(z)-u_0D_F(z,x_0^{\rm P}),
 \label{eq:above-U0}\\
 \mathcal U_{k+1}
 &:=\mathcal U_k-d_{k+1}D_F(z,x_{k+1}^{\rm P})\nonumber\\
 &\quad-u_k\left[D_F(x_k^{\rm P},x_{k+1}^{\rm P})
 -\frac12\norm{g_k-g_{k+1}}_q^2\right]\nonumber\\
 &\quad-\left[D_{h^*}(s_k,s_{k+1})
 -a_p\norm{v_{k+1}-v_k}_p^p\right].
 \label{eq:above-Ustep}
\end{align}
Every subtracted term is nonnegative: the first by convexity, the second by
the accepted consecutive-pair test, and the third by
\eqref{eq:above-oriented-remainder}.  Expanding the Bregman divergences gives
\begin{equation}\label{eq:above-Uterminal}
 \mathcal U_N
 =u_N\bigl(F(x_N^{\rm P})-F(z)\bigr)
 +h(z)+h^*(s_N)-\langle s_N,z\rangle+\mathsf R_P,
\end{equation}
where
\begin{align}
 \mathsf R_P&=\sum_{k=0}^{N-1}\rho_k,
 \label{eq:above-primal-residual}\\
 \rho_k&=\frac{u_k}{2}\norm{g_k-g_{k+1}}_q^2
 +a_p\norm{v_{k+1}-v_k}_p^p\nonumber\\
 &\qquad+d_k\langle g_k-g_{k+1},v_{k+1}-v_k\rangle.
 \label{eq:above-mixed-residual}
\end{align}

The finite expansion fixes both the cross term and its sign.  With
$A_k=g_k$, $B_k=v_k$, and $A_{N+1}=0$, the update gives
\begin{equation}\label{eq:above-weighted-x}
 u_{k+1}x_{k+1}^{\rm P}-u_kx_k^{\rm P}
 =d_{k+1}B_{k+1}+d_k(B_{k+1}-B_k).
\end{equation}
Abel summation therefore yields
\begin{align}
 &\sum_{k=0}^{N-1}\langle s_k-s_{k+1},B_{k+1}\rangle
 -\sum_{k=0}^{N}u_k\langle A_k-A_{k+1},x_k^{\rm P}\rangle\nonumber\\
 &\hspace{1.5cm}=\sum_{k=0}^{N-1}d_k
 \langle A_k-A_{k+1},B_{k+1}-B_k\rangle.
 \label{eq:above-Abel}
\end{align}
Here $u_0=d_0$, $x_0^{\rm P}=B_0$, and $d_N=0$ close the two endpoints;
the same plateau cancels the remaining anchor
$s_N-s_0+\sum_{i=0}^Nd_iA_i$.

The residual in \eqref{eq:above-mixed-residual} can be negative, so its
negative part must be bounded before the two phases are scheduled.  Put
\begin{equation}\label{eq:above-error-constants}
 s_p:=\frac{p}{p-2},
 \qquad
 \kappa_p:=\frac{p-2}{2p}(pa_p)^{-2/(p-2)}.
\end{equation}
For
$X=\|A_k-A_{k+1}\|_q$ and
$Y=\|B_{k+1}-B_k\|_p$, H\"older's inequality gives
\begin{align}
 \rho_k
 &\ge\frac{u_k}{2}X^2+a_pY^p-d_kXY\nonumber\\
 &\ge a_pY^p-\frac{d_k^2}{2u_k}Y^2.
 \label{eq:above-first-minimization}
\end{align}
The first line displays the two stabilizing terms and the cross term produced
by \eqref{eq:above-Abel}.  Minimizing the quadratic expression over $X$
gives the second line.  For $b=d_k^2/(2u_k)$, the remaining scalar problem is
\begin{equation}\label{eq:above-scalar-supremum}
 \sup_{Y\ge0}\{bY^2-a_pY^p\}
 =\frac{p-2}{p}b\left(\frac{2b}{pa_p}\right)^{2/(p-2)}.
\end{equation}
Indeed, its positive stationary point satisfies
$Y^{p-2}=2b/(pa_p)$, and substitution produces the displayed value.  Thus
\begin{equation}\label{eq:above-EN}
 \mathsf R_P\ge-\mathcal E_N,
 \qquad
 \mathcal E_N:=\kappa_p\sum_{k=0}^{N-1}
 \left(\frac{d_k^2}{u_k}\right)^{s_p}.
\end{equation}
The derivation used only free vectors with the fixed coefficient arrays, so
the lower bound is universal.  Since the Fenchel block in
\eqref{eq:above-Uterminal} is nonnegative and
$\mathcal U_N\le\mathcal U_0\le h(z)$, we obtain
\begin{equation}\label{eq:above-primal-gap}
 F(x_N^{\rm P})-\inf F\le\frac{h(z)+\mathcal E_N}{u_N}.
\end{equation}

\subsection{The anti-diagonal residual identity}

The coefficient and sequence bijection in this subsection is the one used in
the proof of mirror duality
\citep[Thm.~1, Eq.~(8), and Eqs.~(4)--(10)]
{KimParkOzdaglarDiakonikolasRyu2024}.  We spell out its four blocks because
the even $p$-power increment and the error \eqref{eq:above-EN} must be
transported pointwise.

For free vectors $A_0,\ldots,A_N$ and $B_0,\ldots,B_N$, set $A_{N+1}=0$ and
\begin{equation}\label{eq:above-free-X}
 X_0=B_0,
 \qquad
 X_{k+1}=X_k-\sum_{i=0}^{k+1}b_{k+1,i}B_i.
\end{equation}
For an even function $\Omega$, define
\begin{align}
 \mathsf R_P^\Omega(A,B)
 &:=\sum_{k=0}^{N-1}\frac{u_k}{2}\norm{A_k-A_{k+1}}_q^2
 +\sum_{k=0}^{N-1}\Omega(B_k-B_{k+1})\nonumber\\
 &\quad+\sum_{k=0}^{N-1}
 \left\langle\sum_{i=0}^k\alpha_{k+1,i}A_i,B_{k+1}\right\rangle
 -\sum_{k=0}^{N}u_k\langle A_k-A_{k+1},X_k\rangle.
 \label{eq:above-free-RP}
\end{align}
Put $w_i=1/u_{N-i}$.  For free $C,D$, let
\begin{align}
 P_k(C)&:=w_{k+1}C_{k+1}
 -\sum_{j=0}^k(w_{j+1}-w_j)C_j,
 \label{eq:above-PC}\\
 Q_k(D)&:=\sum_{i=0}^k\alpha_{N-i,N-1-k}D_i,
 \label{eq:above-QD}
\end{align}
and
\begin{align}
 \mathsf R_D^\Omega(C,D)
 &:=\sum_{k=0}^{N-1}\frac{w_{k+1}}2\norm{C_k-C_{k+1}}_q^2
 +\sum_{k=0}^{N-1}\Omega(D_k-D_{k+1})\nonumber\\
 &\quad+\sum_{k=0}^{N}
 \left\langle\sum_{i=0}^kb_{N-i,N-k}C_i,D_k\right\rangle
 +\sum_{k=0}^{N-1}\langle P_k(C),Q_k(D)\rangle.
 \label{eq:above-free-RD}
\end{align}

\begin{lemma}[Pointwise finite residual identity]
\label{lem:above-pointwise}
For every $A,B$, define $C,D$ by
\begin{align}
 C_0&=u_NA_N,
 \label{eq:above-map-C0}\\
 C_{N-i}-C_{N-i-1}&=u_i(A_i-A_{i+1}),
 \qquad 0\le i<N,
 \label{eq:above-map-Cdiff}\\
 D_i&=B_{N-i},
 \qquad 0\le i\le N.
 \label{eq:above-map-D}
\end{align}
This transformation is a bijection and, for every even $\Omega$,
\begin{equation}\label{eq:above-pointwise-identity}
 \mathsf R_P^\Omega(A,B)=\mathsf R_D^\Omega(C,D).
\end{equation}
\end{lemma}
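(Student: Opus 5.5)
This lemma is the $p>2$ copy of \Cref{lem:below-identity}. That earlier proof used only three facts: the coefficient recursions \eqref{eq:above-alpha}--\eqref{eq:above-b-array}, the plateau $u_N=u_{N-1}$, and evenness of $\Omega$. It never used the specific weights \eqref{eq:below-weights} or the quadratic form of $\Omega$. I would therefore rerun that argument block by block, with $n$ replaced by $N$ and the weights taken to be general nondecreasing positive weights as in \eqref{eq:above-general-weights}.

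\emph{Bijection.} The inverse map is explicit. From $C_0$ we recover $A_N=C_0/u_N$, using $u_N>0$. Then \eqref{eq:above-map-Cdiff} recovers, in descending order $i=N-1,\ldots,0$, each $A_i=A_{i+1}+(C_{N-i}-C_{N-i-1})/u_i$; this needs $u_i>0$, which the weight hypothesis supplies. Finally $B_i=D_{N-i}$. Both maps are linear, so the correspondence is a bijection.

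\emph{Identity, four blocks.} I would match the two residuals term by term.
\begin{enumerate}[label=(\alph*)]
\item Quadratic block: with $i=N-k-1$ we have $w_{k+1}=1/u_i$, so $\tfrac{w_{k+1}}{2}\|C_k-C_{k+1}\|_q^2=\tfrac{u_i}{2}\|A_i-A_{i+1}\|_q^2$.
\item $\Omega$-block: reindexing and evenness give $\Omega(D_k-D_{k+1})=\Omega(B_{N-k}-B_{N-k-1})=\Omega(B_{N-k-1}-B_{N-k})$.
\item $b$-block: unroll $X_k=B_0-\sum_{\ell=1}^k\sum_i b_{\ell,i}B_i$ from \eqref{eq:above-free-X}. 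Then apply summation by parts to $-\sum_k u_k\langle A_k-A_{k+1},X_k\rangle$, with $C_{-1}:=0$. The rewriting $u_k(A_k-A_{k+1})=C_{N-k}-C_{N-k-1}$ makes the partial sums telescope to $C_{N-k}$. The convention $b_{0,0}=-1$ absorbs the $X_0=B_0$ term. The result is $\sum_{k}\sum_{i\le k}b_{k,i}\langle C_{N-k},B_i\rangle$. Under $(\ell,j)=(N-i,N-k)$ this equals the third sum of \eqref{eq:above-free-RD}.
\item $\alpha$-block: telescoping \eqref{eq:above-PC} with $w_{j+1}-w_j$ gives $P_k(C)=A_{N-k-1}$, using $C_0=u_NA_N$. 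The change $(\ell,j)=(N-i-1,N-k-1)$ then turns $\sum_k\langle P_k(C),Q_k(D)\rangle$ into $\sum_\ell\langle\sum_j\alpha_{\ell+1,j}A_j,B_{\ell+1}\rangle$.
\end{enumerate}

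The step I expect to be the main obstacle is block (c). Its index bookkeeping must be checked at the boundaries. First, the $k=N$ term involves $A_{N+1}=0$ and the plateau $u_N=u_{N-1}$. Second, the ranges of the reindexed double sum must coincide, since $b_{k,i}$ vanishes outside $0\le i\le k$. I would verify those boundary terms explicitly. For block (d), I would also confirm that the definition of $P_k$ really telescopes to $A_{N-k-1}$ for general weights, not just the plateau schedule.
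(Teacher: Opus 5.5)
Your proposal is correct and follows the paper's proof essentially verbatim. It uses the same explicit inverse, the same four-block matching with $i=N-k-1$, and summation by parts with $C_{-1}=0$ and $b_{0,0}=-1$. It also uses the telescoping $P_k(C)=A_{N-k-1}$ and the index changes $(\ell,j)=(N-i,N-k)$ and $(\ell,j)=(N-i-1,N-k-1)$.

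One small correction to your list of ingredients: the plateau $u_N=u_{N-1}$ is never used in this lemma. The $k=N$ term in block (c) is simply $u_N\langle A_N,X_N\rangle=\langle C_0,X_N\rangle$ by the definition of $C_0$, and the telescoping of $P_k$ needs only $u_i>0$.
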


\begin{proof}
The inverse first recovers $A_N=C_0/u_N$, then recovers the preceding $A_i$
from \eqref{eq:above-map-Cdiff}, and finally sets $B_i=D_{N-i}$.  If
$i=N-k-1$, the quadratic block transforms as
\begin{equation}\label{eq:above-map-quadratic}
 \frac{w_{k+1}}2\norm{C_k-C_{k+1}}_q^2
 =\frac{u_i}{2}\norm{A_i-A_{i+1}}_q^2.
\end{equation}
Because $\Omega$ is even, reversal gives
\begin{equation}\label{eq:above-map-even}
 \sum_{k=0}^{N-1}\Omega(D_k-D_{k+1})
 =\sum_{i=0}^{N-1}\Omega(B_i-B_{i+1}).
\end{equation}

For the first bilinear block, define $C_{-1}=0$.  Summation by parts using
\eqref{eq:above-free-X} gives
\begin{equation}\label{eq:above-map-bilinear-b}
 -\sum_{k=0}^{N}u_k\langle A_k-A_{k+1},X_k\rangle
 =\sum_{k=0}^{N}\sum_{i=0}^kb_{k,i}
 \langle C_{N-k},B_i\rangle.
\end{equation}
The convention $b_{0,0}=-1$ supplies the initial endpoint.  The change of
indices $(\ell,j)=(N-i,N-k)$ turns the third term of
\eqref{eq:above-free-RD} into the right-hand side of
\eqref{eq:above-map-bilinear-b}.  For the remaining block, telescoping in
\eqref{eq:above-PC} gives
\begin{equation}\label{eq:above-map-P}
 P_k(C)=\frac{C_{k+1}-C_k}{u_{N-k-1}}
 +\frac{C_k-C_{k-1}}{u_{N-k}}+\cdots+\frac{C_0}{u_N}
 =A_{N-k-1}.
\end{equation}
The change $(\ell,j)=(N-i-1,N-k-1)$ then converts the last term of
\eqref{eq:above-free-RD} into
\[
 \sum_{\ell=0}^{N-1}
 \left\langle\sum_{j=0}^{\ell}\alpha_{\ell+1,j}A_j,
 B_{\ell+1}\right\rangle,
\]
which is the remaining primal bilinear block.  Equations
\eqref{eq:above-map-quadratic}--\eqref{eq:above-map-P} prove the identity.
\end{proof}

Taking $\Omega(z)=a_p\|z\|_p^p$, the universal primal bound
\eqref{eq:above-EN} and the inverse bijection give, for every free dual
assignment,
\begin{equation}\label{eq:above-dual-residual-lower}
 \mathsf R_D^\Omega(C,D)\ge-\mathcal E_N.
\end{equation}
Thus the transformation preserves the displayed residual formed from the
oriented $p$-power lower remainders.

\subsection{The dual method and its queried endpoint}

The Phase-II coefficient array is the anti-diagonal transform of the Phase-I
array in the sense of
\citet[Secs.~3.1--3.2]{KimParkOzdaglarDiakonikolasRyu2024}.  Starting from a
queried point $q_0^{\rm D}$, write $G_i=\nabla F(q_i^{\rm D})$ and set
\begin{equation}\label{eq:above-dual-init}
 r_0=-b_{N,N}G_0=\frac{d_{N-1}}{u_N}G_0.
\end{equation}
For $k=0,\ldots,N-1$, perform the following operations in order:
\begin{align}
 q_{k+1}^{\rm D}
 &=q_k^{\rm D}-\sum_{i=0}^{k}\alpha_{N-i,N-1-k}\nabla h^*(r_i)\nonumber\\
 &=q_k^{\rm D}-d_{N-1-k}\nabla h^*(r_k),
 \label{eq:above-dual-q}\\
 &\text{query }q_{k+1}^{\rm D}
 \text{ and set }G_{k+1}=\nabla F(q_{k+1}^{\rm D}),
 \label{eq:above-dual-query}\\
 r_{k+1}
 &=r_k-\sum_{i=0}^{k+1}b_{N-i,N-1-k}G_i.
 \label{eq:above-dual-r}
\end{align}
Every gradient in \eqref{eq:above-dual-r} has already been returned.  The
terminal row identity is the CFOM endpoint relation of Kim et al.
\citep[Prop.~1, Lem.~3, and Cor.~1]
{KimParkOzdaglarDiakonikolasRyu2024}.  Expanding
\eqref{eq:above-dual-r}, the coefficient of $G_0$ is the negative row sum of
$b_N$, the coefficient of $G_j$ for $1\le j<N$ is the negative row sum of
$b_{N-j}$, and each vanishes by \eqref{eq:above-row-sums}; the coefficient of
$G_N$ is $-b_{0,0}=1$.  Hence
\begin{equation}\label{eq:above-terminal-gradient}
 r_N=G_N=\nabla F(q_N^{\rm D}).
\end{equation}
The point $q_N^{\rm D}$ is queried before this identity is formed.

For $w_k=1/u_{N-k}$, define
\begin{equation}\label{eq:above-V0}
 \mathcal V_0=w_0\bigl(F(q_0^{\rm D})-F(q_N^{\rm D})\bigr)
\end{equation}
and
\begin{align}
 \mathcal V_{k+1}
 &:=\mathcal V_k-(w_{k+1}-w_k)D_F(q_N^{\rm D},q_k^{\rm D})\nonumber\\
 &\quad-w_{k+1}\left[D_F(q_k^{\rm D},q_{k+1}^{\rm D})
 -\frac12\norm{G_k-G_{k+1}}_q^2\right]\nonumber\\
 &\quad-\left[D_{h^*}(r_k,r_{k+1})
 -a_p\norm{\nabla h^*(r_{k+1})-\nabla h^*(r_k)}_p^p\right].
 \label{eq:above-Vstep}
\end{align}
The first block is nonnegative by convexity, the second by the passed
consecutive-pair test, and the last by
\eqref{eq:above-oriented-remainder}.  We next expose the telescope that
identifies the terminal energy.  Write
$\Delta w_k=w_{k+1}-w_k$ and recall from
\eqref{eq:above-dual-q} that
$q_{k+1}^{\rm D}-q_k^{\rm D}=-Q_k(Z)$.  After the function Bregman terms are
expanded, their pure function-value coefficients cancel:
\[
 \bigl(w_0+\Delta w_0-w_1\bigr)F(q_0^{\rm D})
 +\sum_{k=1}^{N-1}\bigl(\Delta w_k+w_k-w_{k+1}\bigr)F(q_k^{\rm D})
 +\left(-w_0-\sum_{k=0}^{N-1}\Delta w_k+w_N\right)F(q_N^{\rm D})=0.
\]
The remaining inner products can be collected by reversing the order of the
sum over $q_N^{\rm D}-q_k^{\rm D}=-\sum_{\ell=k}^{N-1}Q_\ell(Z)$:
\begin{align*}
 &w_0\bigl(F(q_0^{\rm D})-F(q_N^{\rm D})\bigr)
 -\sum_{k=0}^{N-1}\Delta w_kD_F(q_N^{\rm D},q_k^{\rm D})
 -\sum_{k=0}^{N-1}w_{k+1}D_F(q_k^{\rm D},q_{k+1}^{\rm D})\\
 &\qquad=
 \sum_{k=0}^{N-1}
 \left\langle
 w_{k+1}G_{k+1}-\sum_{j=0}^k\Delta w_jG_j,
 Q_k(Z)\right\rangle
 =\sum_{k=0}^{N-1}\langle P_k(G),Q_k(Z)\rangle.
\end{align*}
This is the second bilinear block in \eqref{eq:above-free-RD}.

For the mirror terms, set $r_{-1}=0$.  The initialization
\eqref{eq:above-dual-init} and update \eqref{eq:above-dual-r} give, for
$0\le k\le N$,
\[
 \sum_{i=0}^kb_{N-i,N-k}G_i=r_{k-1}-r_k.
\]
Using the displayed orientation of $D_{h^*}$ and $Z_k=\nabla h^*(r_k)$
therefore yields
\begin{align*}
 -\sum_{k=0}^{N-1}D_{h^*}(r_k,r_{k+1})
 &=h^*(r_N)-h^*(r_0)
   +\sum_{k=1}^{N}\langle r_{k-1}-r_k,Z_k\rangle\\
 &=h^*(r_N)+D_{h^*}(0,r_0)
   +\sum_{k=0}^{N}
   \left\langle\sum_{i=0}^kb_{N-i,N-k}G_i,Z_k\right\rangle.
\end{align*}
Here the $k=0$ summand is $\langle-r_0,Z_0\rangle$, which combines with
$D_{h^*}(0,r_0)=-h^*(r_0)+\langle Z_0,r_0\rangle$; hence no initial or
terminal mirror term is missing.  The quadratic gradient increments and the
$p$-power increments in \eqref{eq:above-Vstep} are already the first two
blocks of \eqref{eq:above-free-RD}.  Combining the three groups gives
\begin{equation}\label{eq:above-Vterminal}
 \mathcal V_N=h^*(r_N)+D_{h^*}(0,r_0)
 +\mathsf R_D^\Omega(G,Z),
 \qquad Z_i=\nabla h^*(r_i).
\end{equation}
The pointwise identity in \Cref{lem:above-pointwise} is used here only
through the residual lower bound \eqref{eq:above-dual-residual-lower}; no
reversal identity for the full Bregman potential is invoked.  Dropping the
nonnegative second term and using $F(q_N^{\rm D})\ge\inf F$ yields
\begin{equation}\label{eq:above-dual-bound}
 h^*\bigl(\nabla F(q_N^{\rm D})\bigr)
 \le\frac{F(q_0^{\rm D})-\inf F}{u_N}+\mathcal E_N.
\end{equation}

\subsection{Weight balance and the two phases}

For a horizon $N$ and scale $\gamma>0$, choose the plateaued weights
\begin{equation}\label{eq:above-quadratic-weights}
 u_k=\gamma(k+1)^2\quad(0\le k<N),
 \qquad u_N=\gamma N^2.
\end{equation}
Then $d_k^2/u_k\le4\gamma$.  Define
\begin{equation}\label{eq:above-rate-constants}
 B_p:=4^{s_p}\kappa_p,
 \qquad a:=\frac{p-2}{p},
 \qquad r:=\frac{p+2}{p},
 \qquad c_p:=(2B_p)^{-a}.
\end{equation}
Given an error budget $\eta>0$, set
\begin{equation}\label{eq:above-gamma}
 \gamma=\left(\frac{\eta}{2B_pN}\right)^a.
\end{equation}
The error sum and the endpoint weight now have compatible scales:
\begin{align}
 \mathcal E_N
 &\le B_p\gamma^{p/(p-2)}N\le\frac\eta2,
 \label{eq:above-error-budget}\\
 u_N&=\gamma N^2=c_p\eta^{(p-2)/p}N^{(p+2)/p}.
 \label{eq:above-weight-growth}
\end{align}
The power $(p+2)/p$ in \eqref{eq:above-weight-growth} determines the final
complexity exponent $p/(p+2)$.

In Phase~I the goal is to reduce the function gap to the scale needed by the
terminal-gradient phase.  Suppose $F$ has a minimizer $z$ with
$\|z\|_p\le1$.  Use $\eta_F=1/p$.  Since $h(z)\le1/p$,
\eqref{eq:above-primal-gap} and \eqref{eq:above-weight-growth} give
\begin{equation}\label{eq:above-phase-one-gap}
 F(x_N^{\rm P})-\inf F\le H_pN^{-(p+2)/p}
\end{equation}
where
\begin{equation}\label{eq:above-Hp}
 H_p:=\frac{3p^a}{2pc_p}.
\end{equation}
Therefore
\begin{equation}\label{eq:above-NF}
 N_F:=\left\lceil\left(\frac{H_p}{\delta}\right)^{p/(p+2)}\right\rceil
\end{equation}
produces a queried point $y_F=x_{N_F}^{\rm P}$ with
\begin{equation}\label{eq:above-Delta}
 \Delta:=F(y_F)-\inf F\le\delta.
\end{equation}

Phase~II changes the target: the terminal energy now controls
$h^*(\nabla F)=\|\nabla F\|_q^q/q$.  Translate the objective by $y_F$ and
reuse its returned pair as the initial pair.  With
\begin{equation}\label{eq:above-etaD}
 \eta_D:=\frac{\delta^q}{q},
\end{equation}
the dual estimate becomes
\begin{equation}\label{eq:above-phase-two-bound}
 h^*(\nabla F(q_N^{\rm D}))
 \le\frac{\Delta}{c_p\eta_D^aN^{(p+2)/p}}+\frac{\eta_D}{2}.
\end{equation}
The first term is at most $\eta_D/2$ once
\begin{align}
 N^{(p+2)/p}
 &\ge\frac{2\Delta}{c_p\eta_D^{1+a}}
 \le\frac{J_p}{\delta},
 \label{eq:above-phase-two-condition}\\
 q(1+a)&=\frac{p}{p-1}\left(1+\frac{p-2}{p}\right)=2,
 \label{eq:above-exponent-identity}
\end{align}
where
\begin{equation}\label{eq:above-Jp}
 J_p:=\frac{2q^{1+a}}{c_p}.
\end{equation}
Thus choose
\begin{equation}\label{eq:above-ND}
 N_D:=\left\lceil\left(\frac{J_p}{\delta}\right)^{p/(p+2)}\right\rceil.
\end{equation}
Equations \eqref{eq:above-phase-two-bound}--\eqref{eq:above-ND} give
\begin{equation}\label{eq:above-normalized-terminal}
 \frac1q\norm{\nabla F(q_{N_D}^{\rm D})}_q^q
 \le\frac{\delta^q}{q},
 \qquad
 \norm{\nabla F(q_{N_D}^{\rm D})}_q\le\delta.
\end{equation}
Both phases therefore use
$O_p(\delta^{-p/(p+2)})$ updates, but they spend their error budgets on
different quantities: Phase~I on a gap of order $\delta$, and Phase~II on a
$q$-power terminal certificate.

\subsection{The finite trial at guessed curvature and distance}

We now express the two phases using only the trial values $M,D$ and returned
oracle pairs.  Let
$G=\|\nabla f(x_0)\|_q>\eps$, assume $D\ge G/M$, and set
\begin{equation}\label{eq:above-trial-normalization}
 \kappa:=\frac{MD}{\eps}>1,
 \qquad
 \delta:=\frac{\eps}{MD}=\kappa^{-1},
 \qquad
 F_{M,D}(y):=\frac{f(x_0+Dy)-f(x_0)}{MD^2}.
\end{equation}
The exact normalized pair is obtained from the physical pair by
\begin{equation}\label{eq:above-pair-rescaling}
 \left(
 \frac{f(x_0+Dy)-f(x_0)}{MD^2},
 \frac{\nabla f(x_0+Dy)}{MD}
 \right).
\end{equation}
For physical points $X=x_0+Dx$ and $Y=x_0+Dy$, direct substitution gives
\begin{equation}\label{eq:above-guard-scaling}
 D_{F_{M,D}}(x,y)\ge
 \frac12\|\nabla F_{M,D}(x)-\nabla F_{M,D}(y)\|_q^2
 \quad\Longleftrightarrow\quad
 \mathsf C_M(X,Y).
\end{equation}
Thus every inequality used by the two finite energy arguments is observable
from consecutive cached pairs.

If $D\ge R$, a closest minimizer $x^*$ maps to
$y^*=(x^*-x_0)/D$ with
\begin{equation}\label{eq:above-radius-entry}
 \norm{y^*}_p\le1,
 \qquad h(y^*)\le\frac1p.
\end{equation}
This implication is used only in the analysis; the routine neither knows
$R$ nor queries an optimizer.

\paragraph{Phase I.}
Run \eqref{eq:above-primal-s}--\eqref{eq:above-primal-x} on
$F_{M,D}$ with horizon $N_F$ from \eqref{eq:above-NF}.  The pair at $x_0$
is cached.  After constructing each new normalized point $x_{k+1}^{\rm P}$,
query the physical point $X_{k+1}=x_0+Dx_{k+1}^{\rm P}$.  If its returned
gradient is at most $\eps$, return $(\Success,X_{k+1})$.  Otherwise check
\begin{equation}\label{eq:above-primal-guard}
 \mathsf C_M(X_k,X_{k+1}).
\end{equation}
A failed check returns $\Scale$ together with the failed numerical
inequality.  If all $N_F$ checks pass, denote the final queried point by
$Q_0=X_{N_F}$.

\paragraph{Phase II.}
Recenter without a new query:
\begin{equation}\label{eq:above-phase-two-normalization}
 F_2(y):=\frac{f(Q_0+Dy)-f(Q_0)}{MD^2}.
\end{equation}
Set $q_0^{\rm D}=0$ and use the cached pair at $Q_0$ as the initial pair,
so $G_0=\nabla F_2(0)$.  Run
\eqref{eq:above-dual-init}--\eqref{eq:above-dual-r} with horizon $N_D$
from \eqref{eq:above-ND}.  For each update, first form $q_{k+1}^{\rm D}$
from cached gradients, then query
$Q_{k+1}=Q_0+Dq_{k+1}^{\rm D}$.  Return
$(\Success,Q_{k+1})$ immediately if its gradient is at most $\eps$;
otherwise check
\begin{equation}\label{eq:above-dual-guard}
 \mathsf C_M(Q_k,Q_{k+1}).
\end{equation}
A failed check returns $\Scale$.  If all checks pass, the terminal point
$Q_{N_D}$ has already been queried.  Return $\Success$ if its gradient is at
most $\eps$ and $\Radius$ otherwise.

Both guard families use cached exact values and gradients, so they add no
oracle calls.  The only nonlinear internal operation is the coordinatewise
map \eqref{eq:above-mirror-map}; the routine uses no value of $f^*$, inverse
gradient, or optimization oracle.

\begin{proposition}[The finite $p>2$ trial]
\label{prop:abovetrial}
The routine above has the following three outcomes:
\begin{enumerate}[label=(\roman*)]
\item $\Success$ returns a queried point with gradient norm at most $\eps$;
\item $\Scale$ accompanies a failed consecutive-pair $\mathsf C_M$ test and
hence implies $M<L$;
\item $\Radius$ is possible only after all tests pass and implies $D<R$.
\end{enumerate}
Its pair-oracle cost satisfies
\begin{equation}\label{eq:abovetrialcost}
 N_{\rm trial}\le C_p\kappa^{p/(p+2)}.
\end{equation}
\end{proposition}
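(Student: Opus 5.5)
The proof has three parts: the two outcome implications, the conditional-success argument, and the query count. It follows the template of the proof of Proposition~\ref{prop:belowtrial}, with the $p$-power estimates \eqref{eq:above-phase-one-gap} and \eqref{eq:above-normalized-terminal} replacing their quadratic counterparts.

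\emph{Outcomes (i) and (ii).} $\Success$ is declared only after testing a freshly returned exact pair, so the returned point is queried and satisfies the gradient bound. $\Scale$ is returned only with an actually failed $\mathsf C_M(X_k,X_{k+1})$ or $\mathsf C_M(Q_k,Q_{k+1})$. By \eqref{eq:cocoercivity-scale}, such a failure implies $M<L$.

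\emph{Outcome (iii).} I prove the forward implication: if $D\ge R$ and every guard passes, then the terminal gradient is at most $\eps$.
\begin{enumerate}[label=(\alph*)]
\item \emph{Phase I.} By \eqref{eq:above-radius-entry}, $F_{M,D}$ has a minimizer $y^*$ with $h(y^*)\le1/p$. By the scaling equivalence \eqref{eq:above-guard-scaling}, the passed Phase-I guards are exactly the normalized consecutive-pair inequalities used to make every subtracted term in \eqref{eq:above-Ustep} nonnegative. Hence \eqref{eq:above-primal-gap} holds. With the weights \eqref{eq:above-quadratic-weights}, $\eta_F=1/p$, and $N=N_F$ from \eqref{eq:above-NF}, this gives $\Delta\le\delta$ as in \eqref{eq:above-Delta}.
\item \emph{Recentering.} $F_2$ differs from $F_{M,D}$ only by a translation of the argument and an additive constant. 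Therefore $F_2(0)-\inf F_2=\Delta$.
\item \emph{Phase II.} Again via \eqref{eq:above-guard-scaling}, the passed guards make every block of \eqref{eq:above-Vstep} nonnegative. Then \eqref{eq:above-Vterminal} and \eqref{eq:above-dual-residual-lower} yield \eqref{eq:above-dual-bound}, now with $\eta=\eta_D$ and $N=N_D$. The choice \eqref{eq:above-ND} together with the exponent identity \eqref{eq:above-exponent-identity} gives \eqref{eq:above-normalized-terminal}: $\|\nabla F_2(q^{\rm D}_{N_D})\|_q\le\delta$.
\item \emph{Physical units.} Multiplying by $MD$ gives $\|\nabla f(Q_{N_D})\|_q\le MD\delta=\eps$. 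The point $Q_{N_D}$ was queried before the terminal identity \eqref{eq:above-terminal-gradient} was formed.
\end{enumerate}
The outcomes are exhaustive, so the contrapositive of this implication is exactly: $\Radius\Rightarrow D<R$.

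The step needing the most care is checking that the parameters used in the two energy arguments are consistent. First, $\gamma$ in \eqref{eq:above-gamma} must be set separately in each phase, with $\eta_F$ in Phase I and $\eta_D$ in Phase II. Second, the residual bound \eqref{eq:above-EN} must be invoked with the same plateaued arrays that the dual recurrence uses. Third, the chain in \eqref{eq:above-phase-two-condition} must be read correctly: since $\Delta\le\delta$ and $\eta_D^{1+a}=(\delta^q/q)^{1+a}=\delta^2/q^{1+a}$, the sufficient horizon condition is $N^{(p+2)/p}\ge J_p/\delta$.

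\emph{Query count.} Phase I makes at most $N_F$ new queries, since the pair at $x_0$ is cached. Phase II makes at most $N_D$, since the pair at $Q_0$ is reused. Guards use only cached data. Because $\delta=\kappa^{-1}$ and $\kappa>1$,
\[
N_F+N_D\le \bigl(H_p^{p/(p+2)}+J_p^{p/(p+2)}\bigr)\kappa^{p/(p+2)}+2.
\]
The additive $2$ is at most $2\kappa^{p/(p+2)}$, which gives \eqref{eq:abovetrialcost} with a constant $C_p$ depending only on $p$. No norm equivalence is used, so the constant is dimension-free.
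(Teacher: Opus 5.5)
Your proposal is correct and takes essentially the same route as the paper's proof. Outcomes (i)--(ii) come from the returned pair and \eqref{eq:cocoercivity-scale}. The forward implication ($D\ge R$ and all guards pass $\Rightarrow$ success) goes through \eqref{eq:above-Delta}, recentering and \eqref{eq:above-normalized-terminal}, and is then contraposed; the count is $N_F+N_D=O_p(\kappa^{p/(p+2)})$. Your explicit checks (the recentering identity $F_2(0)-\inf F_2=\Delta$, the computation $\eta_D^{1+a}=\delta^2/q^{1+a}$, and the absorption of the ceilings) only spell out steps the paper leaves implicit.
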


\begin{proof}
The $\Success$ and $\Scale$ implications follow directly from the returned
pair and \eqref{eq:cocoercivity-scale}.  To prove the radius implication,
first assume $D\ge R$ and that all guards pass.  By
\eqref{eq:above-radius-entry}, the Phase-I minimizer satisfies the hypothesis
of \eqref{eq:above-phase-one-gap}.  The accepted guards are exactly the
normalized remainders used in \eqref{eq:above-Ustep}, so
\eqref{eq:above-Delta} applies and gives
\begin{equation}\label{eq:above-physical-gap}
 f(Q_0)-f^*\le MD^2\delta=\eps D.
\end{equation}
This gap is an analytical consequence, not a stopping test.

For the recentered function $F_2$, \eqref{eq:above-physical-gap} says
$F_2(0)-\inf F_2\le\delta$.  The Phase-II guards are exactly the normalized
remainders in \eqref{eq:above-Vstep}; convexity supplies the other function
Bregman terms, and \eqref{eq:above-oriented-remainder} supplies the mirror
terms.  Hence \eqref{eq:above-normalized-terminal} gives
\[
 \norm{\nabla f(Q_{N_D})}_q
 =MD\norm{\nabla F_2(q_{N_D}^{\rm D})}_q
 \le MD\delta=\eps.
\]
We have proved the forward implication
\begin{equation}\label{eq:above-forward-success}
 D\ge R\ \text{ and all Phase-I/II guards pass}
 \quad\Longrightarrow\quad \Success.
\end{equation}
The exhaustive-outcome contrapositive shows that a completed nonsuccessful
trial has $D<R$, which is the implication attached to $\Radius$.

Finally, \eqref{eq:above-NF} and \eqref{eq:above-ND} give
$N_F+N_D=O_p(\delta^{-p/(p+2)})$.  Phase~I reuses the cached pair at $x_0$;
Phase~II reuses the Phase-I endpoint.  It therefore makes at most
$N_F+N_D$ new queries, and early termination only decreases this number.
Since $\delta^{-1}=\kappa>1$, integer ceilings and a fixed initialization
cost are absorbed into \eqref{eq:abovetrialcost}.
\end{proof}

For the upper clause of \Cref{prop:pgtwo-optimality}, first query $x_0$ as
required by the oracle model and put $G=\|\nabla f(x_0)\|_q$.  If
$G\le\eps$, return this charged query.  Otherwise smoothness and the radius
bound give $\eps<G\le LR$, so $\kappa=LR/\eps>1$.  Take $M=L$ and
$D=R$.  Then $D\ge G/M$, and every
$\mathsf C_L$ test passes by \eqref{eq:truecocoercivity}.  Restoring the
physical scale in \eqref{eq:above-trial-normalization} gives the call count
$O_p(1+(LR/\eps)^{p/(p+2)})$ and a queried output.  This also shows why the
known-parameter rate and the parameter-free local trial share the same
finite algorithm.

\subsection{The deterministic lower bound}

The hard family begins with the local smoothing and resisting completion of
Guzm\'an--Nemirovski and combines it with the radial cap used by
Diakonikolas--Guzm\'an for unrestricted-query lower bounds
\citep[Secs.~2.1--2.3, Props.~1--2]{GuzmanNemirovski2015}
\citep[Sec.~2.1, Eq.~(3), observations O1--O2]{DiakonikolasGuzman2020}.
These constructions provide the smoothing kernel, its locality and curvature
estimates, the adversarial maximum, and the radial cap.  We give the exact
value--gradient completion, the gradient bound at every unrestricted query,
and the completion-independent radius normalization explicitly.

Let $d\ge2$, let $1\le T\le d$, and put
\begin{equation}\label{eq:above-lower-r0}
 r_0:=\min\{p,3\ln d\}>2.
\end{equation}
Choose $\theta>1$ sufficiently close to one that $2\theta<r_0$, and define
\begin{equation}\label{eq:above-lower-kernel}
 \varphi(x):=2\left(\sum_{j=1}^d|x_j|^{r_0}\right)^{2\theta/r_0},
 \qquad
 G_p:=\{x:\norm{x}_p\le1\}.
\end{equation}
The function $\varphi$ is convex, $C^2$, invariant under signed coordinate
permutations, and satisfies
$\varphi(0)=0$, $\nabla\varphi(0)=0$, and
$\varphi(x)>\|x\|_p$ on $\partial G_p$.  Differentiation and H\"older's
inequality give
\begin{equation}\label{eq:above-lower-Hessian}
 \langle e,\nabla^2\varphi(x)e\rangle
 \le4\theta(r_0-1)\norm{x}_{r_0}^{2\theta-2}\norm{e}_{r_0}^2.
\end{equation}
If $p\le3\ln d$, take $r_0=p$ and $\theta$ close enough to one to obtain
$5p\|e\|_p^2$ on $G_p$.  If $p>3\ln d$, use
\[
 \norm{e}_{r_0}\le d^{1/r_0-1/p}\norm e_p
 \le e^{1/3}\norm e_p
\]
and again take $\theta$ close enough to one.  Thus this kernel meets
all smoothing assumptions with
\begin{equation}\label{eq:above-lower-Mpd}
 M_{p,d}\le
 \begin{cases}
 5p,&p\le3\ln d,\\
 15e^{2/3}\ln d,&p>3\ln d,
 \end{cases}
 \qquad
 M_{p,d}\le C\min\{p,\ln d\}.
\end{equation}

For a convex one-Lipschitz function $\ell$ with respect to $\|\cdot\|_p$
and $\chi>0$, define the local smoothing
\begin{equation}\label{eq:above-lower-smoothing}
 \mathcal S_\chi[\ell](x)
 :=\min_v\{\ell(x+v)+\chi\varphi(v/\chi)\}.
\end{equation}
The boundary inequality for $\varphi$ places every minimizing displacement
in $\chi\operatorname{int}G_p$: on the boundary, Lipschitzness makes the
objective strictly larger than its value at $v=0$.  The zero displacement
and $\varphi\ge0$ then give
\begin{equation}\label{eq:above-lower-smoothing-approx}
 \ell(x)-\chi\le\mathcal S_\chi[\ell](x)\le\ell(x).
\end{equation}
The first-order condition, monotonicity of $\partial\ell$, and
\eqref{eq:above-lower-Hessian} give
\begin{equation}\label{eq:above-lower-smoothing-smooth}
 \norm{\nabla\mathcal S_\chi[\ell](x)
 -\nabla\mathcal S_\chi[\ell](y)}_q
 \le\frac{M_{p,d}}\chi\norm{x-y}_p.
\end{equation}
The value and derivative at $x$ depend only on $\ell$ in $x+\chi G_p$, and
for a signed coordinate permutation $Q$,
\begin{equation}\label{eq:above-lower-equivariance}
 \mathcal S_\chi[\ell\circ Q](x)=\mathcal S_\chi[\ell](Qx).
\end{equation}
These are the exact approximation, smoothness, locality, and symmetry
properties used below.

Translate the first query of an arbitrary deterministic algorithm to the
origin and set
\begin{equation}\label{eq:above-lower-parameters}
 \Delta_T:=T^{-1/p},
 \qquad
 \delta_T:=\frac{\Delta_T}{2T},
 \qquad
 \chi_T:=\frac{\delta_T}{2}=\frac{\Delta_T}{4T},
 \qquad
 \beta_T:=\frac{\chi_T}{M_{p,d}}.
\end{equation}
After the algorithm chooses its $t$th query $z_t$, select an unused
coordinate
\begin{equation}\label{eq:above-lower-coordinate}
 \sigma(t)\in\arg\max_{j\in\{1,\ldots,T\}
 \setminus\{\sigma(1),\ldots,\sigma(t-1)\}}|(z_t)_j|
\end{equation}
and choose $\xi_t\in\{-1,1\}$ so that
$\xi_t(z_t)_{\sigma(t)}=|(z_t)_{\sigma(t)}|$.  Define
\begin{align}
 g_t(x)&:=\max_{1\le i\le t}
 \{\xi_i x_{\sigma(i)}-(i-1)\delta_T\},
 \label{eq:above-lower-partial-g}\\
 H_t(x)&:=\max\left\{\frac12g_t(x),\norm{x}_p-\frac32\right\},
 \label{eq:above-lower-partial-H}\\
 \bar f_t(x)&:=\beta_T\mathcal S_{\chi_T}[H_t](x).
 \label{eq:above-lower-partial-f}
\end{align}
Return the exact pair of $\bar f_t$ at $z_t$.  After $T$ rounds, write
$g=g_T$, $H=H_T$, and $\bar f=\bar f_T$.  Both branches of $H_t$ are convex
and one-Lipschitz, so \eqref{eq:above-lower-smoothing-smooth} and
\eqref{eq:above-lower-parameters} make every $\bar f_t$, including the
completion $\bar f$, convex and one-smooth from $\ell_p$ to $\ell_q$.

\begin{lemma}[Exact pair completion]
\label{lem:above-lower-completion}
The completed function $\bar f$ returns at $z_1,\ldots,z_T$ exactly the
value--gradient pairs used to generate the deterministic transcript.
\end{lemma}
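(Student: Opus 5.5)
The plan is to show that for each $t\le T$, the function $\bar f=\bar f_T$ agrees with $\bar f_t$ on the $\chi_T$-neighbourhood $z_t+\chi_T G_p$. The locality property of $\mathcal S_\chi$ then makes the value and gradient of $\bar f$ at $z_t$ equal those of $\bar f_t$, which are the pairs actually returned. Since $\bar f=\beta_T\mathcal S_{\chi_T}[H_T]$ and $\bar f_t=\beta_T\mathcal S_{\chi_T}[H_t]$, it suffices to prove
\[
 H_T(x)=H_t(x)\qquad\text{for all }x\in z_t+\chi_T G_p,\ 1\le t\le T.
\]
Because $H_s$ is nondecreasing in $s$ (a maximum over more pieces) and both carry the same radial branch $\|x\|_p-3/2$, this reduces to showing that for each later index $i>t$ the new linear piece does not exceed $g_t$ there:
\[
 \xi_i x_{\sigma(i)}-(i-1)\delta_T\le g_t(x)\qquad(x\in z_t+\chi_T G_p).
\]

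\textbf{Step 1 (comparison at the query).} At round $t$ the adversary picks $\sigma(t)$ maximizing $|(z_t)_j|$ over the unused coordinates. Every $\sigma(i)$ with $i>t$ is unused at time $t$, so $|(z_t)_{\sigma(i)}|\le|(z_t)_{\sigma(t)}|=\xi_t(z_t)_{\sigma(t)}$. Hence
\[
 \xi_i(z_t)_{\sigma(i)}-(i-1)\delta_T\le \xi_t(z_t)_{\sigma(t)}-(t-1)\delta_T-(i-t)\delta_T,
\]
which gives a margin of at least $\delta_T$ against the $t$th piece.

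\textbf{Step 2 (perturbation).} For $x=z_t+v$ with $\|v\|_p\le\chi_T$, each coordinate moves by at most $\|v\|_\infty\le\|v\|_p\le\chi_T$. The left side can therefore grow by at most $\chi_T$, and the $t$th piece can drop by at most $\chi_T$. The total loss $2\chi_T=\delta_T$ is exactly covered by the margin, since $\chi_T=\delta_T/2$ in \eqref{eq:above-lower-parameters}. So the inequality holds, with possible equality only when $i=t+1$ on the boundary, which is harmless for the maximum. This gives $g_T=g_t$ on the neighbourhood, hence $H_T=H_t$ there.

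\textbf{Step 3 (locality and gradient).} By the locality property stated after \eqref{eq:above-lower-smoothing-smooth}, the value and gradient of $\mathcal S_\chi[\ell]$ at $x$ depend only on $\ell$ restricted to $x+\chi G_p$. I expect this to be the main obstacle, since locality of the gradient, and not just the value, needs justification. I would argue via the minimizer: every minimizing displacement lies in $\chi\operatorname{int}G_p$, and the gradient is $\nabla\varphi(v^*/\chi)$ by the envelope/first-order condition. So two functions agreeing on the closed ball produce the same minimization problem, the same optimal value, and the same gradient. Equality of the two smoothings at $z_t$ only would not be enough to identify the gradient. The clean fix is to note that the comparison in Step 2 holds on the ball $z'+\chi G_p$ for all $z'$ near $z_t$, because the margin is nonstrict only in the boundary case. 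Alternatively, one can use the interior-minimizer characterization, under which agreement on the ball alone determines both value and $\nabla\varphi(v^*/\chi)$.

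Multiplying by $\beta_T$ shows that $\bar f$ reproduces every transcript pair. By induction over $t$, the deterministic algorithm then generates the same queries on $\bar f$, which completes the proof.
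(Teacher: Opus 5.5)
Your proposal is correct and takes essentially the paper's route. The maximal-unused-coordinate selection gives the $t$th piece a margin of at least $(s-t)\delta_T\ge\delta_T$ over every later piece at $z_t$, and the perturbation budget $2\chi_T=\delta_T$ preserves this on $z_t+\chi_TG_p$, so $g=g_t$ and $H=H_t$ there. Locality of $\mathcal S_{\chi_T}$ in both value and gradient, which the paper simply asserts as a property of the smoothing, then transfers each pair, and induction over the transcript finishes.

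Your interior-minimizer justification of gradient locality is the sound fix, with one harmless sign slip: the envelope gradient is $-\nabla\varphi(v^*/\chi)$, not $+\nabla\varphi(v^*/\chi)$. Your nearby-ball alternative would instead need a strict margin on the whole closed ball, not just away from the boundary. That strict margin does hold, because for $\|v\|_p\le\chi_T$ any two distinct coordinates satisfy $|v_a|+|v_b|\le2^{1/q}\chi_T<2\chi_T$.
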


\begin{proof}
Fix $t<s$.  The coordinate $\sigma(s)$ was unused at time $t$, so maximality
in \eqref{eq:above-lower-coordinate} gives
\begin{equation}\label{eq:above-lower-future-piece}
 \xi_s(z_t)_{\sigma(s)}
 \le |(z_t)_{\sigma(s)}|
 \le |(z_t)_{\sigma(t)}|
 =\xi_t(z_t)_{\sigma(t)}.
\end{equation}
After the offsets, the $t$th affine piece exceeds the $s$th at $z_t$ by at
least $(s-t)\delta_T\ge\delta_T$.  The difference of two pieces is
$2$-Lipschitz.  Since $\chi_T=\delta_T/2$, no future piece exceeds the
$t$th piece anywhere in the complete neighborhood $z_t+\chi_TG_p$; equality
at its boundary leaves the maximum unchanged.  Hence $g$ and $g_t$, and
therefore $H$ and $H_t$, agree throughout that neighborhood.  Locality
of \eqref{eq:above-lower-smoothing} makes $\bar f$ and $\bar f_t$ agree near
$z_t$, so their exact values and gradients coincide.  Induction over the
transcript proves the claim.
\end{proof}

Thus the lemma completes both values and gradients for every unrestricted
query made by the deterministic algorithm.

\begin{lemma}[A gap at every query]
\label{lem:above-lower-gap}
The function $\bar f$ is coercive, attains its minimum, and
\begin{equation}\label{eq:above-lower-query-gap}
 \bar f(z_t)-\min_x\bar f(x)
 \ge\frac{\beta_T\Delta_T}{4}
 =\frac{1}{16M_{p,d}T^{1+2/p}}
 \qquad(1\le t\le T).
\end{equation}
\end{lemma}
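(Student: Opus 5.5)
The plan is to compare an explicit upper bound on $\min\bar f$ with a lower bound on $\bar f$ at each query. Both bounds come from the pointwise sandwich \eqref{eq:above-lower-smoothing-approx}, applied to $\ell=H$, which is legitimate because $H$ is convex and one-Lipschitz. Multiplying that sandwich by $\beta_T$ gives
\[
 \beta_T\bigl(H(x)-\chi_T\bigr)\le\bar f(x)\le\beta_T H(x)
 \qquad(x\in\R^d).
\]

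\emph{Coercivity and attainment.} Since $H(x)\ge\norm{x}_p-\tfrac32$, the left inequality gives $\bar f(x)\ge\beta_T(\norm{x}_p-\tfrac32-\chi_T)$. Hence $\bar f\to\infty$ as $\norm{x}_p\to\infty$. Moreover $\bar f$ is continuous, being $C^1$ by \eqref{eq:above-lower-smoothing-smooth}. Its sublevel sets are therefore compact, and the minimum is attained.

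\emph{Upper bound on the minimum.} I would test $\bar f$ at the point $\bar x$ defined by
\[
 \bar x_{\sigma(i)}:=-\xi_iT^{-1/p}\quad(1\le i\le T),
\]
with all other coordinates equal to zero. This point satisfies $\norm{\bar x}_p=1$. Every affine piece of $g$ evaluated at $\bar x$ equals $-T^{-1/p}-(i-1)\delta_T\le-\Delta_T$, so $g(\bar x)\le-\Delta_T$. The cap branch gives $\norm{\bar x}_p-\tfrac32=-\tfrac12$. Since $\Delta_T=T^{-1/p}\le1$, we get
\[
 H(\bar x)\le\max\{-\Delta_T/2,-1/2\}=-\Delta_T/2,
\]
and therefore $\min\bar f\le\bar f(\bar x)\le-\beta_T\Delta_T/2$.

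\emph{Lower bound at a query.} Fix $t\le T$. The $t$th piece of $g$ is one of the terms in the maximum, so
\[
 H(z_t)\ge\tfrac12 g(z_t)
 \ge\tfrac12\bigl(\xi_t(z_t)_{\sigma(t)}-(t-1)\delta_T\bigr).
\]
By the sign choice of $\xi_t$, the first term is $|(z_t)_{\sigma(t)}|\ge0$. Using $(t-1)\delta_T\le(T-1)\Delta_T/(2T)$, this gives $H(z_t)\ge-(T-1)\Delta_T/(4T)$. Subtracting $\chi_T=\Delta_T/(4T)$ and applying the lower sandwich yields $\bar f(z_t)\ge-\beta_T\Delta_T/4$.

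\emph{Combining the bounds.} Subtracting the two estimates gives
\[
 \bar f(z_t)-\min\bar f\ge\beta_T\Delta_T\Bigl(\tfrac12-\tfrac14\Bigr)=\frac{\beta_T\Delta_T}{4}.
\]
Substituting \eqref{eq:above-lower-parameters} turns this into the explicit constant
\[
 \frac{\beta_T\Delta_T}{4}=\frac{\Delta_T^2}{16TM_{p,d}}=\frac{1}{16M_{p,d}T^{1+2/p}}.
\]

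The main point to check is the arithmetic of the offsets: the smoothing loss $\chi_T$ is exactly compensated by the missing $T$th offset term $(T-1)$ versus $T$. No completion lemma is needed here, since the argument works with the final $H$ directly and uses only the $t$th piece.
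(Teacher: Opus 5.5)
Your proof is correct and follows the paper's argument essentially step for step. You use the same smoothing sandwich together with the $t$th affine piece to get $\bar f(z_t)\ge-\beta_T\Delta_T/4$, the same witness $\bar x_{\sigma(i)}=-\xi_i\Delta_T$ (evaluated at zero smoothing displacement) to get $\min\bar f\le-\beta_T\Delta_T/2$, and the same radial-cap bound $H(x)\ge\|x\|_p-3/2$ for coercivity and attainment.
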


\begin{proof}
The affine piece selected at time $t$ gives
$g(z_t)\ge-(t-1)\delta_T$, so
\begin{equation}\label{eq:above-lower-query-value}
 \mathcal S_{\chi_T}[H](z_t)
 \ge H(z_t)-\chi_T
 \ge-\frac{(T-1)\delta_T}{2}-\chi_T
 =-\frac{\Delta_T}{4}.
\end{equation}
Define $u$ by
\begin{equation}\label{eq:above-lower-witness}
 u_{\sigma(i)}=-\xi_i\Delta_T\quad(1\le i\le T),
 \qquad u_j=0\quad\text{otherwise}.
\end{equation}
Then $\|u\|_p=1$, $g(u)=-\Delta_T$, and
$H(u)=-\Delta_T/2$.  Taking zero smoothing displacement gives
$\mathcal S_{\chi_T}[H](u)\le-\Delta_T/2$.  Combining this with
\eqref{eq:above-lower-query-value} proves the gap.  Finally,
$H(x)\ge\|x\|_p-3/2$, so
\eqref{eq:above-lower-smoothing-approx} gives
$\bar f(x)\ge\beta_T(\|x\|_p-3/2-\chi_T)$, which proves coercivity and
attainment.
\end{proof}

The radial cap in \eqref{eq:above-lower-partial-H} is the one used by
Diakonikolas--Guzm\'an \citep[Sec.~2.1, Eq.~(3)]{DiakonikolasGuzman2020}.
The next calculation verifies the entire-neighborhood identity needed for a
queried-gradient lower bound.

\begin{lemma}[The gradient outside the fixed ball]
\label{lem:above-lower-outside}
If $\norm{x}_p\ge4$, then
\begin{equation}\label{eq:above-lower-outside-gradient}
 \norm{\nabla\mathcal S_{\chi_T}[H](x)}_q=1,
 \qquad
 \norm{\nabla\bar f(x)}_q=\beta_T.
\end{equation}
Every minimizer of $\bar f$ therefore lies in $\{x:\|x\|_p<4\}$.
\end{lemma}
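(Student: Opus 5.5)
Throughout, $x$ denotes a point with $\|x\|_p\ge4$.

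The plan is to show that at such an $x$ the capped function $H$ coincides with the radial branch $\|\cdot\|_p-\tfrac32$ on the whole smoothing neighborhood $x+\chi_TG_p$. Then $\mathcal S_{\chi_T}[H]=\mathcal S_{\chi_T}[\|\cdot\|_p-\tfrac32]$ near $x$ by the locality of \eqref{eq:above-lower-smoothing}, and the claim reduces to computing the gradient of a smoothed norm.

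\emph{Step 1: the radial branch dominates.} Each affine piece of $g$ is $\xi_iy_{\sigma(i)}-(i-1)\delta_T\le|y_{\sigma(i)}|\le\|y\|_\infty\le\|y\|_p$. Hence $\tfrac12g(y)\le\tfrac12\|y\|_p$. Take $y\in x+\chi_TG_p$. Since $\chi_T=T^{-1/p}/(4T)\le\tfrac14$, we get $\|y\|_p\ge4-\tfrac14>3$. Therefore $\|y\|_p-\tfrac32>\tfrac12\|y\|_p\ge\tfrac12g(y)$, which gives $H(y)=\|y\|_p-\tfrac32$ throughout the neighborhood, with strict inequality. By locality, the value and gradient of $\mathcal S_{\chi_T}[H]$ at $x$ equal those of $\mathcal S_{\chi_T}[\ell]$ with $\ell=\|\cdot\|_p-\tfrac32$.

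\emph{Step 2: gradient of the smoothed norm.} For $\ell(y)=\|y\|_p-\tfrac32$, the minimizing displacement $v^*$ lies in $\chi_T\operatorname{int}G_p$, so $x+v^*\ne0$. Since $1<p<\infty$, the norm is differentiable away from the origin. By the envelope/first-order condition, $\nabla\mathcal S_{\chi_T}[\ell](x)=\nabla\ell(x+v^*)$. This uses differentiability of $\mathcal S_{\chi_T}[\ell]$, which follows from the strict convexity of $\varphi$ giving a unique minimizer; alternatively it is the standard infimal-convolution gradient formula. The dual norm of the gradient of $\|\cdot\|_p$ at a nonzero point is exactly $1$, by the norming identities \eqref{eq:normingidentities}. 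This gives $\|\nabla\mathcal S_{\chi_T}[H](x)\|_q=1$, and multiplying by $\beta_T$ gives the second equality.

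\emph{Step 3: location of minimizers.} Minimizers of the differentiable convex $\bar f$ have zero gradient, while $\beta_T>0$. So no minimizer satisfies $\|x\|_p\ge4$, and every minimizer lies in $\{\|x\|_p<4\}$. Existence of a minimizer is already given by Lemma~\ref{lem:above-lower-gap}.

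The main obstacle is justifying the envelope formula in Step 2. In particular, one must know that the minimizing displacement is unique and that $\mathcal S_\chi[\ell]$ is differentiable with this gradient. I would argue via the infimal convolution $\ell\,\square\,\chi\varphi(\cdot/\chi)$: one summand is differentiable convex and the infimum is attained. A standard subdifferential result then shows that $\partial\mathcal S$ at $x$ is contained in $\partial\ell(x+v^*)\cap\nabla(\chi\varphi(\cdot/\chi))(-v^*)$. Since $\ell$ is differentiable at $x+v^*\neq0$, this intersection is a singleton of dual norm one.
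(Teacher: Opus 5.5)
Your proposal is correct and follows essentially the same route as the paper. Both arguments show radial domination on the whole neighborhood $x+\chi_TG_p$ (since $\|x+v\|_p\ge 4-\chi_T>3$), reduce to the smoothed norm, and use the envelope identity $\nabla\mathcal S_{\chi_T}[H](x)=\nabla\|x+v_x\|_p$ with $x+v_x\neq 0$ to get unit dual norm, then apply the zero-gradient argument for minimizers. The only difference is that your infimal-convolution subdifferential formula spells out the differentiability and envelope step that the paper simply asserts.
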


\begin{proof}
Each affine piece in $g$ has dual norm one and a nonpositive offset, so
\begin{equation}\label{eq:above-lower-radial-domination}
 \frac12g(y)\le\frac12\norm y_p.
\end{equation}
If $\|x\|_p\ge4$ and $v\in\chi_TG_p$, then
$\|x+v\|_p\ge4-\chi_T>3$.  Hence the radial branch dominates throughout the
smoothing neighborhood, and
\begin{equation}\label{eq:above-lower-radial-envelope}
 \mathcal S_{\chi_T}[H](x)
 =\min_{v\in\chi_TG_p}
 \left\{\norm{x+v}_p-\frac32+\chi_T\varphi(v/\chi_T)\right\}.
\end{equation}
At a minimizing displacement $v_x$, the envelope identity gives
\begin{equation}\label{eq:above-lower-envelope-gradient}
 \nabla\mathcal S_{\chi_T}[H](x)=\nabla\norm{x+v_x}_p.
\end{equation}
The envelope is differentiable, so this gradient is independent of the
choice of a minimizing displacement.  Since $x+v_x\ne0$,
\begin{equation}\label{eq:above-lower-norm-gradient}
 [\nabla\norm w_p]_j
 =\frac{\operatorname{sgn}(w_j)|w_j|^{p-1}}{\norm w_p^{p-1}},
 \qquad
 \norm{\nabla\norm w_p}_q=1.
\end{equation}
This proves \eqref{eq:above-lower-outside-gradient}.  A minimizer of a
differentiable convex function has zero gradient, proving the final claim.
\end{proof}

\begin{proposition}[Unit-smooth queried-gradient bound]
\label{prop:above-lower-base-gradient}
Every first-$T$ query of the completed function satisfies
\begin{equation}\label{eq:above-lower-base-gradient}
 \norm{\nabla\bar f(z_t)}_q
 \ge\frac{1}{128M_{p,d}T^{1+2/p}}.
\end{equation}
\end{proposition}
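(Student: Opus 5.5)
We convert the function-value gap of \Cref{lem:above-lower-gap} into a gradient lower bound using convexity together with the localization of minimizers from \Cref{lem:above-lower-outside}. Fix a query $z_t$ and a minimizer $x^\star$ of $\bar f$, which exists by \Cref{lem:above-lower-gap}. Convexity and H\"older give
\[
 \bar f(z_t)-\bar f(x^\star)\le\ip{\nabla\bar f(z_t)}{z_t-x^\star}
 \le\norm{\nabla\bar f(z_t)}_q\,\norm{z_t-x^\star}_p .
\]
Combined with \eqref{eq:above-lower-query-gap}, this yields
\[
 \norm{\nabla\bar f(z_t)}_q\ge\frac{\beta_T\Delta_T}{4\norm{z_t-x^\star}_p}.
\]
We split into two cases according to the size of $\norm{z_t}_p$.

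\emph{Case $\norm{z_t}_p\ge4$.} Here \Cref{lem:above-lower-outside} gives $\norm{\nabla\bar f(z_t)}_q=\beta_T$ directly. Since $\beta_T\Delta_T/4\le\beta_T$ (indeed $\Delta_T\le1$), we have
\[
 \beta_T\ge\frac{\beta_T\Delta_T}{32}=\frac{1}{128M_{p,d}T^{1+2/p}},
\]
because $\beta_T\Delta_T/4=1/(16M_{p,d}T^{1+2/p})$.

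\emph{Case $\norm{z_t}_p<4$.} By \Cref{lem:above-lower-outside}, every minimizer satisfies $\norm{x^\star}_p<4$. Hence $\norm{z_t-x^\star}_p<8$, and the displayed bound gives
\[
 \norm{\nabla\bar f(z_t)}_q>\frac{\beta_T\Delta_T}{32}=\frac{1}{128M_{p,d}T^{1+2/p}},
\]
which is exactly \eqref{eq:above-lower-base-gradient}.

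Both cases rely on the exact pair completion of \Cref{lem:above-lower-completion}. It guarantees that the transcript the algorithm sees is the one generated by $\bar f$, so that the points $z_t$ are indeed the queries made on the completed instance.

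The only delicate point is confirming the normalization identity $\beta_T\Delta_T/4=1/(16M_{p,d}T^{1+2/p})$. From \eqref{eq:above-lower-parameters},
\[
 \beta_T\Delta_T=\frac{\Delta_T^2}{4TM_{p,d}}=\frac{T^{-2/p}}{4TM_{p,d}},
\]
which matches. We also need the outside-ball gradient bound $\beta_T$ to dominate the target. This holds since $\Delta_T\le1$ and $T\ge1$, so no separate obstacle arises. The constant $128=16\cdot8$ comes precisely from the diameter $8$ of the ball of radius $4$ containing both points.
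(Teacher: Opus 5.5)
Your proof is correct and follows the paper's argument essentially line for line. You use the same case split at $\|z_t\|_p=4$, the outside-ball identity $\|\nabla\bar f(z_t)\|_q=\beta_T$ in the first case, and in the second case convexity plus H\"older against a minimizer confined to the radius-$4$ ball, which produces the factor $8$ and the constant $128=16\cdot 8$. Your appeal to \Cref{lem:above-lower-completion} is not needed inside this proof, since the paper invokes it only afterwards when transferring the bound to the physical instance, but it does no harm.
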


\begin{proof}
If $\|z_t\|_p\ge4$, \Cref{lem:above-lower-outside} gives
$\|\nabla\bar f(z_t)\|_q=\beta_T$, which is larger than the right side of
\eqref{eq:above-lower-base-gradient}.  Otherwise choose an actual minimizer
$z^*$ of $\bar f$.  The same lemma gives $\|z^*\|_p<4$, and hence
$\|z_t-z^*\|_p<8$.  Convexity and \Cref{lem:above-lower-gap} now give
\begin{align}
 \frac{\beta_T\Delta_T}{4}
 &\le\bar f(z_t)-\bar f(z^*)
 \le\langle\nabla\bar f(z_t),z_t-z^*\rangle\nonumber\\
 &\le8\norm{\nabla\bar f(z_t)}_q.
 \label{eq:above-lower-inside-gradient}
\end{align}
Since $\beta_T=\Delta_T/(4TM_{p,d})$ and
$\Delta_T^2=T^{-2/p}$, this is exactly
\eqref{eq:above-lower-base-gradient}.  The inside and outside cases cover
every possible query.
\end{proof}

It remains to fix the actual supplied radius without allowing the scale to
depend on unseen choices.  We combine the signed-permutation-invariant kernel
of Guzm\'an--Nemirovski and the radial family of
Diakonikolas--Guzm\'an with a completion-independent scaling
\citep{GuzmanNemirovski2015,DiakonikolasGuzman2020}.

\begin{lemma}[Completion-independent optimizer radius]
\label{lem:above-lower-radius}
There is a number $r_T$, depending only on $p,d,T$ and the fixed kernel, but
not on $(\sigma,\xi)$, such that
\begin{equation}\label{eq:above-lower-radius-range}
 r_T=\operatorname{dist}_p(0,\arg\min\bar f),
 \qquad \frac14<r_T<4.
\end{equation}
\end{lemma}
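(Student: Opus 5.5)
The plan is to exploit the symmetry of the construction: every completed $\bar f$ is a signed-coordinate-permutation image of one fixed reference function. For any admissible $(\sigma,\xi)$, take the signed permutation $Q$ with $(Qx)_i=\xi_i x_{\sigma(i)}$ for $1\le i\le T$. We extend $\sigma$ to a permutation of $\{1,\ldots,d\}$, acting on the remaining coordinates by the identity pattern, and set unused signs to $+1$. Then $g(x)=g^{\rm ref}(Qx)$, where $g^{\rm ref}(y):=\max_{1\le i\le T}\{y_i-(i-1)\delta_T\}$ depends only on $T,p$. The radial branch $\|x\|_p-3/2$ is invariant under $Q$, since $\ell_p$ norms are invariant under signed permutations. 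Hence $H=H^{\rm ref}\circ Q$ with $H^{\rm ref}$ independent of the transcript. By the equivariance \eqref{eq:above-lower-equivariance}, $\bar f=\bar f^{\rm ref}\circ Q$ with $\bar f^{\rm ref}:=\beta_T\mathcal S_{\chi_T}[H^{\rm ref}]$. Consequently $\arg\min\bar f=Q^{-1}\arg\min\bar f^{\rm ref}$. Because $Q$ is a linear $\ell_p$-isometry fixing $0$, the distance $\operatorname{dist}_p(0,\arg\min\bar f)$ equals $r_T:=\operatorname{dist}_p(0,\arg\min\bar f^{\rm ref})$. This number depends only on $p,d,T$ and the kernel $\varphi$, which proves the independence claim. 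Attainment of the distance follows because the argmin set is nonempty (\Cref{lem:above-lower-gap}), closed and convex.

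It then remains to bound $r_T$, and I would work with $\bar f^{\rm ref}$ directly; the arguments are insensitive to $Q$ anyway. The upper bound $r_T<4$ is immediate from \Cref{lem:above-lower-outside}: every minimizer lies in $\{\|x\|_p<4\}$.

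For the lower bound $r_T>1/4$, I would show that every point $x$ with $\|x\|_p\le1/4$ is strictly suboptimal. The witness $u$ from \eqref{eq:above-lower-witness} gives $\bar f(u)\le-\beta_T\Delta_T/2$. For $\|x\|_p\le 1/4$, the smoothing lower bound \eqref{eq:above-lower-smoothing-approx} gives $\mathcal S_{\chi_T}[H](x)\ge H(x)-\chi_T\ge \tfrac12 g(x)-\chi_T$. The first piece of $g$ gives $g(x)\ge \xi_1x_{\sigma(1)}$, which is only bounded below by $-\|x\|_p\ge-1/4$. This alone does not beat $-\Delta_T/2$, since $\Delta_T=T^{-1/p}$ may be small. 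So the crude bound must be sharpened, and this is the main obstacle. Instead, I would use the max structure: $g(x)\ge\max_i\{\xi_ix_{\sigma(i)}\}-(T-1)\delta_T$. Moreover, $\max_i \xi_i x_{\sigma(i)}\ge -\min_i|x_{\sigma(i)}|\ge -T^{-1/p}\|x\|_p\ge-\Delta_T/4$, since the smallest of $T$ coordinates has $|\cdot|\le T^{-1/p}\|x\|_p$. With $(T-1)\delta_T<\Delta_T/2$ and $\chi_T=\Delta_T/(4T)$, this gives $\mathcal S(x)\ge-\Delta_T/8-\Delta_T/4-\Delta_T/(4T)$. That is $\ge -5\Delta_T/8$ in the worst case $T=1$, which is still not above $-\Delta_T/2$. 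So I would recheck the constants, using the exact offsets instead: for $T=1$, $\delta$-offsets vanish and the bound is $-\Delta_T/8-\Delta_T/4>-\Delta_T/2$. In general I would bound the offset term more carefully, via the piece attaining the max, or rescale the threshold $1/4$ if needed. The expected route is that the sum of the three negative contributions stays strictly above $-\Delta_T/2$. Hence no minimizer lies in the $\ell_p$ ball of radius $1/4$, so $r_T>1/4$.
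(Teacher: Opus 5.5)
Your symmetry argument and the upper bound $r_T<4$ are the same as in the paper's proof, and each lower-bound estimate you write is correct. However, the lower bound $r_T>1/4$ is not actually established. You bound $\tfrac12(T-1)\delta_T$ by $\Delta_T/4$ and then subtract $\chi_T=\Delta_T/(4T)$ on top, which gives $-5\Delta_T/8$ in the worst case. You then conclude that the constants fail and leave the step open, with the fallback of ``rescaling the threshold $1/4$''; that would prove a different statement. The missing step is only exact arithmetic. With $\delta_T=\Delta_T/(2T)$ and $\chi_T=\Delta_T/(4T)$,
\[
 \frac{(T-1)\delta_T}{2}+\chi_T=\frac{(T-1)\Delta_T}{4T}+\frac{\Delta_T}{4T}=\frac{\Delta_T}{4}.
\]
So your own chain $\mathcal S_{\chi_T}[H](x)\ge\tfrac12 g(x)-\chi_T\ge-\tfrac{\Delta_T}{8}-\tfrac{(T-1)\delta_T}{2}-\chi_T$ already gives, for every $T$ and every $\|x\|_p\le1/4$,
\[
 \mathcal S_{\chi_T}[H](x)\ge-\tfrac{3}{8}\Delta_T>-\tfrac12\Delta_T\ge\mathcal S_{\chi_T}[H](u).
\]
The value $-3\Delta_T/8$ you found at $T=1$ is in fact the value for every $T$. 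Combined with your remark that the distance to the nonempty closed argmin set is attained, this gives $r_T>1/4$.

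For comparison, the paper bounds the maximum of the affine pieces from below by their average. The averaged slope $T^{-1}\sum_i\xi_ie_{\sigma(i)}$ has $\ell_q$ norm $T^{-1/p}=\Delta_T$, and the averaged offset is $(T-1)\delta_T/2$. Hence $g(x)\ge-\Delta_T\|x\|_p-(T-1)\delta_T/2$ and $\mathcal S_{\chi_T}[H](x)\ge-(2T+1)\Delta_T/(8T)\ge-3\Delta_T/8$. Your min-coordinate bound recovers the same $T^{-1/p}$ factor but pays the worst offset $(T-1)\delta_T$ rather than the average. The averaging bound is therefore slightly sharper for $T>1$, yours equals $-3\Delta_T/8$ uniformly, and both suffice once the sum with $\chi_T$ is computed exactly.
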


\begin{proof}
For a completion $(\sigma,\xi)$, extend
$(Qx)_i=\xi_i x_{\sigma(i)}$, $1\le i\le T$, to a signed coordinate
permutation.  Under $Q$, every completed maximum becomes the same canonical
maximum with offsets indexed by $i$.  The $p$-norm, $G_p$, and $\varphi$ are
invariant, so \eqref{eq:above-lower-equivariance} makes all completed
functions isometric images of one canonical function.  Their distances from
the origin to their minimizer sets are therefore equal; call this common
distance $r_T$.  \Cref{lem:above-lower-outside} gives $r_T<4$.

For the lower bound, the maximum of the affine pieces is at least their
average.  The dual norm of
$T^{-1}\sum_{i=1}^T\xi_i e_{\sigma(i)}$ is
$T^{-1/p}=\Delta_T$, so
\begin{equation}\label{eq:above-lower-average}
 g(x)\ge-\Delta_T\norm{x}_p-\frac{(T-1)\delta_T}{2}.
\end{equation}
If $\|x\|_p\le1/4$, then
\begin{align}
 \mathcal S_{\chi_T}[H](x)
 &\ge H(x)-\chi_T\nonumber\\
 &\ge-\frac{\Delta_T}{8}-\frac{(T-1)\delta_T}{4}-\chi_T
 =-\frac{2T+1}{8T}\Delta_T
 \ge-\frac{3\Delta_T}{8}.
 \label{eq:above-lower-small-ball}
\end{align}
The witness in \eqref{eq:above-lower-witness} has smoothed value at most
$-\Delta_T/2$.  Thus no point in the closed ball of radius $1/4$ minimizes
$\bar f$, proving $r_T>1/4$.
\end{proof}

Return now to the arbitrary physical deterministic algorithm.  Translate
its first query to zero and define the common change of variables
\begin{equation}\label{eq:above-lower-physical-function}
 f_{\rm hard}(x)
 :=\frac{LR^2}{r_T^2}\,
 \bar f\!\left(\frac{r_T}{R}(x-x_0)\right).
\end{equation}
Writing $z=(r_T/R)(x-x_0)$ gives
\begin{equation}\label{eq:above-lower-physical-gradient}
 \nabla f_{\rm hard}(x)=\frac{LR}{r_T}\nabla\bar f(z).
\end{equation}
Unit smoothness of $\bar f$ makes $f_{\rm hard}$ globally $L$-smooth from
$\ell_p$ to $\ell_q$, and its minimizer set has actual distance exactly $R$
from $x_0$.  The factor $r_T$ is common to every completion, so the scaled
oracle transcript cannot reveal future coordinates or signs.  By
\Cref{lem:above-lower-completion,prop:above-lower-base-gradient} and
$r_T<4$, every first-$T$ physical query obeys
\begin{equation}\label{eq:above-lower-final-gradient}
 \norm{\nabla f_{\rm hard}(x_t)}_q
 \ge\frac{LR}{512M_{p,d}T^{1+2/p}}.
\end{equation}
Solving \eqref{eq:above-lower-final-gradient} for $T$ gives
\eqref{eq:pgtwo-known-lower-count}, provided the selected integer horizon
satisfies $T\le d$.  Together with the known-parameter upper construction,
this completes the proof of \Cref{prop:pgtwo-optimality}.

The exponent and the factor $\min\{p,\ln d\}$ are those of the
Diakonikolas--Guzm\'an lower benchmark
\citep[Thm.~4 and Cor.~2]{DiakonikolasGuzman2024}.  The matching statement is
therefore a statement about the deterministic polynomial exponent for fixed
finite $p$, with the displayed dimension qualification.

\section{Geometric summation and completion of the parameter-free theorem}
\label{sec:global}

The global argument separates three parts.  Within a fixed scale epoch,
returns showing that the radius guess is too small generate a geometric
sequence of trial radii.  Across epochs, returns showing that the scale guess
is too small generate a second geometric sequence of terminal costs.  The
initial calibration occurs before both sequences and contributes
an additive logarithm.  The proof below carries out these two sums separately,
then proves finite termination and
queried-output correctness independently of the cost calculation.

The three branches now provide exactly the outcomes required by
\Cref{prop:certification}.  With $\kappa=MD/\eps\ge1$, their trial costs are
\begin{equation}\label{eq:trialcostabstract}
 T_p(\kappa)\le
 \begin{cases}
 C_p\kappa^{1/2},&1<p<2,\\
 C\kappa^{1/2},&p=2,\\
 C_p\kappa^{\alpha_p},&p>2,
 \end{cases}
 \qquad \alpha_p:=\frac{p}{p+2}.
\end{equation}
In the nontrivial controller branch, $\kappa\ge G/\eps>1$, so the additive
two in \eqref{eq:below-local-count} is absorbed into its first line; it does
not accumulate once per trial.
The first line is \Cref{prop:belowtrial}, the second is
\Cref{prop:euclideantrial}, and the third is \Cref{prop:abovetrial}.
Every scale return contains an actually failed observable inequality; every
radius return is the contrapositive of a fully guarded
known-radius proof.
The radius-then-scale accounting follows the parameter-adaptation procedure
of Lan--Ouyang--Zhang \citep[Sec.~4]{LanOuyangZhang2026}; the
unknown-distance adaptive regularization of Ito and Fukuda is a related
comparison \citep[Thm.~4.2]{ItoFukuda2021}.  The above-two routine has the
same three outcomes, so the two sums below apply with exponent $p/(p+2)$.

\begin{lemma}[Geometric summation of trial costs]\label{lem:amortization}
Let $M_s=2^sM_{\rm a}$ be the scale in epoch $s$ and
\[
 D_{s,j}=2^j\frac{G}{M_s},\qquad j=0,1,\ldots,
\]
until the epoch returns $\Scale$ or $\Success$.  Then all visited scales obey
$M_s<2L$, all expensive radii obey $D_{s,j}\le2R$, and summing
\eqref{eq:trialcostabstract} over the realized path gives the corresponding
main term in \eqref{eq:mainbelow}, \eqref{eq:maineuclidean}, or
\eqref{eq:mainabove}.
\end{lemma}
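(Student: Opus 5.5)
The plan takes the two bounds on visited pairs from \Cref{prop:certification} and then carries out two nested geometric sums. Every $\Scale$ return certifies $M_s<L$, so $M_{s+1}=2M_s<2L$. The first scale is $M_{\rm a}<2L$ by \Cref{lem:anchor}. Every radius reached by doubling is twice a radius certified $D<R$ by a $\Radius$ return, hence at most $2R$. The epoch-initial radius $G/M_s\le G/M_{\rm a}=D_{\rm a}\le2R$, again by \Cref{lem:anchor}. Write $\kappa_{s,j}=M_sD_{s,j}/\eps=2^jG/\eps$. For each realized trial these bounds give
\[
 \kappa_{s,j}\le \frac{M_s\cdot 2R}{\eps}<\frac{4LR}{\eps}\le4\Kbar.
\]

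\emph{Inner sum (radius doubling within epoch $s$).} Let $J_s$ be the last index visited in epoch $s$. Then $\kappa_{s,j}=2^{j-J_s}\kappa_{s,J_s}$. Write $\alpha$ for the local exponent: $1/2$ below or at two, $p/(p+2)$ above two. Using \eqref{eq:trialcostabstract}, the cost of the epoch is
\[
 \sum_{j\le J_s}C_p\kappa_{s,j}^{\alpha}\le \frac{C_p}{1-2^{-\alpha}}\kappa_{s,J_s}^{\alpha}.
\]
The terminal $\kappa_{s,J_s}$ satisfies $\kappa_{s,J_s}\le M_s\cdot2R/\eps$. This uses the radius bound at the terminal trial itself, not just at its predecessor.

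\emph{Outer sum (scale epochs).} The epoch-$s$ terminal bound is $C_p'(2RM_s/\eps)^\alpha=C_p'(2RM_{\rm a}/\eps)^\alpha 2^{s\alpha}$. This is again geometric in $s$, dominated by the last epoch $S$, where $M_S<2L$. The total is therefore at most $C_p''(4LR/\eps)^\alpha\le C_p'''\Kbar^\alpha$, with constant $4^\alpha/((1-2^{-\alpha})^2)$ times the local $C_p$.

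The number of epochs is at most $1+\lceil\log_2(2L/M_{\rm a})\rceil$, and finite termination follows from that bound. The inner radius loop in each epoch is finite because a $\Radius$ return requires $D<R$ while $D$ doubles. This accounting gives the main term, and adding the calibration cost from \Cref{lem:anchor} gives the additive logarithm.

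The one delicate point is that the trial-cost bound \eqref{eq:trialcostabstract} assumes $\kappa\ge1$. This holds because $D\ge G/M$ gives $\kappa\ge G/\eps>1$. No additive per-trial constant then survives to be multiplied by the number of trials, which is logarithmic and would otherwise spoil the bound.

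The main obstacle I expect is the case where an epoch begins with $G/M_s$ already larger than needed, since a reset must never push $D$ above $2R$. This is handled by the monotonicity $G/M_s\le G/M_{\rm a}$, so the terminal-domination argument covers the case $J_s=0$ as well.
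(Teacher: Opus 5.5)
Your proposal is correct and follows the paper's proof essentially step for step. Both arguments use the same terminal-dominated geometric sum over radius doublings within an epoch, with terminal bound $\kappa_{s,J_s}\le 2M_sR/\eps$; the $j=0$ case is handled through $G/M_s\le D_{\rm a}\le 2R$. Both then use the same geometric sum over scale epochs, dominated by the last epoch where $M_S<2L$, and note that $\kappa\ge G/\eps>1$ absorbs any additive per-trial constant.
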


\paragraph{Proof roadmap.}
The proof has two distinct summation stages.  First fix an epoch $s$ and use
the preceding $\Radius$ return to bound its last dimensionless radius
$\kappa_{s,j}$ by $2k_s$; the powers of the doubling sequence then sum to the
one-epoch bound.  Next use $k_{s+1}=2k_s$ and the final-scale bound
$M_s<2L$ to sum the one-epoch bounds over $s$.  We perform both stages for all
three regimes using only geometric power sums.

\begin{proof}
The scale and radius bounds were established in
\Cref{prop:certification}: every $M_s<2L$ and every expensive
$D_{s,j}\le2R$.  We retain their exact normalization for the count.  Set
\[
 \beta=\frac G\eps>1,\qquad
 k_s=\frac{M_sR}{\eps},\qquad
 \kappa_{s,j}=\frac{M_sD_{s,j}}\eps=2^j\beta.
\]
The calibration lemma gives $\beta\le2k_0$.  For $j\ge1$, the preceding radius was
certified to be below $R$, and hence
\begin{equation}\label{eq:kappabound}
 \kappa_{s,j}<2k_s.
\end{equation}

\emph{Stage 1: radius trials inside one epoch.}
For $j=0$, the calibration lemma gives
\[
 \beta=\frac G\eps
 =\frac{M_{\rm a}D_{\rm a}}\eps
 \le\frac{2M_{\rm a}R}{\eps}=2k_0\le2k_s.
\]
For $j\ge1$, the preceding radius $D_{s,j-1}$ returned $\Radius$ and is
strictly below $R$; hence
\[
 \kappa_{s,j}
 =\frac{M_s(2D_{s,j-1})}\eps<2k_s,
\]
which is \eqref{eq:kappabound}.  Moreover
$\kappa_{s,j}=2^j\beta$.  For any exponent $a>0$ and terminal index $J_s$,
\begin{equation}\label{eq:geometricpowersum}
 \sum_{j=0}^{J_s}\kappa_{s,j}^a
 =\kappa_{s,J_s}^a\sum_{r=0}^{J_s}2^{-ar}
 \le\frac{\kappa_{s,J_s}^a}{1-2^{-a}}.
\end{equation}
Thus every power cost is terminal-dominated.  Applying
\eqref{eq:geometricpowersum} with $a=1/2$ below and at two, or with
$a=\alpha_p$ above two, yields the complete one-epoch bound
\begin{equation}\label{eq:one-epoch}
 \sum_jT_p(\kappa_{s,j})\le
 \begin{cases}
 C_pk_s^{1/2},&1<p<2,\\
 Ck_s^{1/2},&p=2,\\
 C_pk_s^{\alpha_p},&p>2.
 \end{cases}
\end{equation}

\emph{Stage 2: scale epochs.}
Every epoch-ending $\Scale$ outcome doubles $M_s$, so
\[
 k_{s+1}=\frac{(2M_s)R}{\eps}=2k_s.
\]
If $S_*$ is the terminal epoch, the scale invariant gives
\begin{equation}\label{eq:terminalepochcondition}
 k_{S_*}=\frac{M_{S_*}R}{\eps}<\frac{2LR}{\eps}=2K.
\end{equation}
Apply the same backward geometric sum to the powers of $k_s$.  For either
$1<p<2$ or $p=2$, with the appropriate constant, this gives
\[
 \sum_{s=0}^{S_*}C_pk_s^{1/2}
 \le C_pk_{S_*}^{1/2}\sum_{r=0}^{S_*}2^{-r/2}
 \le C_pK^{1/2}.
\]
For $p=2$, $C_p$ here is replaced by the universal constant $C$.
For $p>2$, replacing $1/2$ by $\alpha_p$ gives
$C_pK^{\alpha_p}$.
These are exactly the three main terms in \Cref{thm:main}.  In the nontrivial branch
$G>\eps$, \eqref{eq:Gbound} implies $K>1$; replacing $K$ by $\Kbar$ also
covers the immediate-success branch of the theorem.
\end{proof}

The first sum prices radius discovery at a fixed curvature scale; the second
prices curvature discovery itself.  The two failure implications permit this
ordered accounting.  A rectangular search would
have no corresponding decomposition over the realized sequence of trials.

It remains to account for immediate success and the initial calibration, prove
finite termination and queried-output correctness, and verify that every
implemented quantity belongs to $(p,d,\eps,x_0,z_0,M_0)$ or to the observed
transcript.

\begin{proof}[Completion of the proof of \Cref{thm:main}]
Query $x_0$.  If $G\le\eps$, return $x_0$ after one post-initialization call;
the returned point has just been queried and this constant cost is covered by
$\Kbar\ge1$.  Henceforth $G>\eps$.  Then \eqref{eq:Gbound} gives
$\eps<G\le LR$, so $K>1$.

\emph{Initialization overhead.}
Run the initial calibration in \Cref{lem:anchor}.  It uses at
most
\[
 1+\left\lceil\log_2\frac{L}{M_0}\right\rceil
 \le C\log\!\left(e+\frac{L}{M_0}\right)
\]
pair queries and returns $M_{\rm a}<2L$ and $D_{\rm a}\le2R$.

\emph{Finite termination.}
Use \Cref{alg:controller} with the local trial appropriate to the supplied
fixed exponent $p$.  Each local trial is finite.  A $\Scale$ outcome can occur
only while $M<L$ and then doubles $M$; consequently only finitely many scale
epochs occur before one has $M\ge L$.  Within any fixed epoch, a $\Radius$
outcome can occur only while $D<R$ and then doubles $D$; consequently only
finitely many radius outcomes occur before either the epoch returns $\Scale$
or the current radius reaches $R$.  At an epoch with $M\ge L$, no scale guard
can fail.  Once also $D\ge R$, the appropriate local proposition forces
$\Success$.  Therefore the controller terminates after finitely many trials.

\emph{Queried-output correctness.}
The controller returns only on a $\Success$ outcome.  In all three local
propositions, success is declared only after evaluating the pair oracle at the
reported terminal point and checking its returned gradient.  Thus the output
$\widehat x$ is actually queried and satisfies
$\|\grad f(\widehat x)\|_q\le\eps$.

\emph{Cost.}
The below-two, Euclidean, and above-two trial bounds are exactly the three
lines of \eqref{eq:trialcostabstract}.  \Cref{lem:amortization} first sums
radius trials within an epoch and then scale epochs, giving respectively
\[
 C_pK^{1/2},\qquad CK^{1/2},\qquad
 C_pK^{p/(p+2)}.
\]
Adding the initialization bound gives the second term in
\eqref{eq:mainbelow}--\eqref{eq:mainabove}; replacing $K$ by $\Kbar$ covers
the immediate-success case.

\emph{Information and dimension.}
The implementation uses only $p,d,\eps,x_0,z_0,M_0$, deterministic internal
minimizations of explicit models, and queried values and gradients.  The
quantities $L,R,f^*$ and an optimizer appear only in the proof of the two
failure implications.  Every geometric constant used above depends only on
fixed $p$
or is universal; no norm conversion introducing a polynomial factor in $d$
occurs.  This establishes every clause of \Cref{thm:main}.
\end{proof}

The three local routines cover $1<p<2$, the Euclidean case, and $p>2$; their
costs are combined by the two geometric sums above.

\section{Conclusion}
\label{sec:conclusion}

We studied parameter-free gradient minimization for smooth convex objectives over $\ell_p$ geometries, with a particular focus on what information is needed to obtain a finite guarantee when the smoothness and distance scales are unknown.  In the strict counted local value-gradient model, a finite complexity bound depending only on $LR/\eps$ is impossible without some nondegenerate scale information, even in one dimension and even when $LR/\eps=4$.  With a nondegenerate secant observation supplied at initialization, this obstruction disappears.  The resulting method applies for every fixed $1<p<\infty$ and returns a queried point with gradient norm at most $\eps$.  Its post-initialization oracle complexity is of order
$\Kbar^{1/2}$ for $1<p\le2$ and $\Kbar^{p/(p+2)}$ for $2<p<\infty$, up to the additive logarithmic cost of the initial scale calibration.

The construction has two main components.  The initial calibration produces working estimates of the unknown smoothness and radius scales from the supplied secant information.  The subsequent controller increases a scale or radius estimate only after the corresponding observable test has failed. This allows the local routines for $1<p<2$, $p=2$, and $p>2$ to be combined without introducing an additional multiplicative logarithm in the main polynomial term.  In particular, for finite $p>2$ the local construction attains the exponent $p/(p+2)$, and the parameter-free controller preserves this exponent when $L$ and $R$ are not given to the method.

\begin{small}
\paragraph{Acknowledgement.} The authors used Chatgpt 5.6 for assistance with proof exploration, verification, and manuscript preparation. The mathematical statements and proofs in this work were also formalized and mechanically verified in Lean    available at \url{https://github.com/yuningyang19/parameter_free_gradient_p}.  All mathematical content was independently reviewed by the authors.
\end{small}

\bibliographystyle{plainnat}
\bibliography{references}

\end{document}